\newcommand*{\numlambda}{1}
\newcommand*{\numa}{2}
\tikzset{elegant/.style={smooth,thick,samples=50,line width=1.2pt}}
\tikzset{eaxis/.style={->,>=stealth}}
\definecolor{myblue}{rgb}{0,0,0.5}
\definecolor{mygreen}{rgb}{0,0.5,0}
\definecolor{myred}{rgb}{0.5,0,0}
\def \x{{\mbox{\boldmath $x$}}}
\newcommand{\bw}{{\mbox{\boldmath $w$}}}
\newcommand{\bbR}{{\mathbb{R}}}
\newtheorem{theorem}{Theorem}[section]
\newtheorem{lemma}[theorem]{Lemma}
\newtheorem{proposition}[theorem]{Proposition}
\newtheorem{definition}[theorem]{Definition}
\newtheorem{assumption}[theorem]{Assumption}
\newtheorem{remark}[theorem]{Remark}
\newtheorem{example}[theorem]{Example}
\def \[{\begin{equation}}
\def \]{\end{equation}}
\title{Perturbation techniques for convergence analysis of proximal gradient method and other first-order algorithms via variational analysis}
\date{First version: October 2017 / Second version: July 2018}
\author{Xiangfeng Wang\thanks{Shanghai Key Lab for Trustworthy Computing, School of Computer Science and Software Engineering, East China Normal University, China. The research of this author was supported by NSFC No. 11501210. Email: xfwang@sei.ecnu.edu.cn.}
\and Jane J. Ye \thanks{Department of Mathematics and Statistics, University of Victoria, Canada. The research of this author was partially
supported by NSERC. Email: janeye@uvic.ca.}
\and Xiaoming Yuan\thanks{Department of Mathematics,  The University of Hong Kong, Hong Kong, China. This author was supported by the General Research Fund from Hong Kong Research Grants Council: 12302318. Email: xmyuan@hku.hk}
\and Shangzhi Zeng\thanks{Department of Mathematics,  The University of Hong Kong, Hong Kong, China. Email: zengsz@connect.hku.hk}
\and Jin Zhang\thanks{Corresponding author: Department of Mathematics, Hong Kong Baptist University, Hong Kong.
           HKBU Institute of Research and Continuing Education, Shenzhen, China.
             Email: zhangjin198637@gmail.com}
}
\begin{document}
\maketitle

\begin{abstract}
We develop new perturbation techniques for conducting convergence analysis of various first-order algorithms for a class of nonsmooth optimization problems. We consider the iteration scheme of an algorithm to construct a perturbed stationary point set-valued map, and define the perturbing parameter by the difference of two consecutive iterates. Then, we show that the calmness condition of the induced set-valued map, together with a local version of the proper separation of stationary value condition, is a sufficient condition to ensure the linear convergence of the algorithm. The equivalence of the calmness condition to the one for the canonically perturbed stationary point set-valued map is proved, and this equivalence allows us to derive some sufficient conditions for calmness by using some recent developments in variational analysis. These sufficient conditions are different from existing results (especially, those error-bound-based ones) in that they can be easily verified for many concrete application models. Our analysis is focused on the fundamental proximal gradient (PG) method, and it enables us to show that any accumulation of the sequence generated by the PG method must be a stationary point in terms of the proximal subdifferential, instead of the limiting subdifferential. This result finds the surprising fact that the solution quality found by the PG method is in general superior. Our analysis also leads to some improvement for the linear convergence results of the PG method in the convex case. The new perturbation technique can be conveniently used to derive linear rate convergence of a number of other first-order methods including the well-known alternating direction method of multipliers and primal-dual hybrid gradient method, under mild assumptions.
\end{abstract}

\noindent {\bf Keywords}: Calmness, error bound, proximal gradient method, linear convergence, variational analysis, alternating direction method of multipliers

\section{Introduction}\setcounter{equation}{0}

We illustrate our technique for
 the following (possibly) nonconvex and nonsmooth optimization problem:
\begin{equation}\label{Basic_Problem}
\min_{x\in \bbR^n}\ F(x) := f(x) + g(x),
\end{equation}
where $f:{\bbR^n} \rightarrow (-\infty, \infty]$ is a  proper lower semi-continuous (lsc) function that is  smooth  in its domain ${\rm dom}f:=\{x~|~ f(x)<\infty\}$    and $g:{\bbR^n} \rightarrow (-\infty, \infty]$ is  a proper lsc and possibly nonsmooth function.

Various data fitting problems in areas such as machine learning, signal processing, and statistics can be formulated in the form of  (\ref{Basic_Problem}), where $f$ is a loss function measuring the deviation of observations from a solution point and $g$ is a regularizer intended to induce certain structure in {the solution point}.
With the advent of big data era,  the problem instances are typically of large scale; and in recent years first-order methods such as the proximal gradient (PG) method originated from \cite{passty1979} (see also \cite{polyak1987introduction}),  block coordinate descent-type methods and their extended accelerated versions are popularly used to solve problem (\ref{Basic_Problem}).
In this paper, we concentrate on the study of the PG method.
For solving problem (\ref{Basic_Problem}), recall that the iterative scheme of the PG method is
\begin{equation}\label{PG_method_basic}
x^{k+1} \in  \hbox{Prox}_{g}^{\gamma}\left( x^k - \gamma \nabla f(x^k) \right),
\end{equation} where
$\gamma>0$ represents the  step-size and the proximal operator associated with  $g$ is defined as
\begin{equation}\label{Basic_Proximal_Problem}
\hbox{Prox}_{g}^{\gamma} \left( a\right) := \arg\min_{x\in {\bbR^n}} \left\{ g(x) + \frac{1}{2\gamma}\left\| x - a \right\|^2 \right\}.
\end{equation}
When $g$ is an indicator function of a closed convex set, the PG method reduces to the projected gradient method (see, e.g., \cite{polyak1987introduction}); when $f\equiv0$, it reduces to the proximal point algorithm (PPA) \cite{passty1979}; and when $g\equiv0$ it reduces to the standard gradient descent method (see, e.g., \cite{Cauchy1847}).

Throughout this paper, unless otherwise stated, we assume that the following assumptions hold.
 \begin{assumption}[Standing Assumption I]\label{assum_sc_h}
	{\rm (i)}  $f$ is smooth with $L$-Lipschitz gradient with $L>0$ which means that  $f$ is smooth on ${\rm dom}f $ which is assumed to be   open and  $\nabla f(x)$ is Lipschitz continuous on a closed set $C\supseteq {\rm dom}f\cap {\rm dom } g$ with constant $L$.
{\rm (ii)} $g(x)$ is continuous in  ${\rm dom } g$.
\end{assumption}
\begin{assumption}[Standing Assumption II]\label{assum_sc_gprox} $F(x)\geq F_{\rm min}$ for all $x$ in $\bbR^n$.
 $g$ is  prox-bounded which means that  the proximal operator $\hbox{Prox}_{g}^{\gamma}(\cdot)$ is well-defined when $\gamma$ is selected as $0< \gamma <\gamma_g$ for certain $\gamma_g >0$.
\end{assumption}
In addition, we assume that a global optimal solution of the optimization problem (\ref{Basic_Proximal_Problem}) is easy to calculate for any $a\in {\bbR^n}$ along with certain well chosen $\gamma > 0$.
This assumption can be satisfied by many important applications because for many popular regularizers like the $l_1$-norm and group sparse penalty, the subproblems (\ref{Basic_Proximal_Problem})  all have closed form solutions.
Even for nonconvex penalties such as the  SCAD and MCP to be discussed below,   closed-form solutions may be found if $\gamma$ is chosen appropriately.

{{It is known that various first-order methods for the convex case of problem (\ref{Basic_Problem}) converge at the $O\left(\frac{1}{k}\right)$ or $O\left(\frac{1}{k^2}\right)$ sublinear rates, where $k\geq 1$ is the number of iterations; see, e.g., \cite{BeckTeboulle2009,Nesterov2013introductory,TaoBoleyZhang2016,Tseng2010approximation}.}} However, for problem (\ref{Basic_Problem}) with certain specific structure, it has been observed numerically that many of them converge at a faster rate than that suggested by the theory; see, e.g., \cite{xiao2013proximal}. In particular, when $f$ is strongly convex and $g$ is convex, \cite{Odonoghue2015adaptive, Schmidt2011convergence} has proved the global linear convergence rate of the PG method with respect to the sequence of objective function values.

Many application problems have nonconvex data-fidelity objectives $f(x)$.
For instance, { the nonconvex neural network based loss function has been very popular} in the deep learning literature \cite{Goodfellow2016deep,Lecun2015deep}.
To see this, we present {{a simple neural network (NN)}} model for illustration. For a given dataset $\left\{ a_i, b_i \right\}_{i=1}^m$ with $a_i\in \mathbb{R}^{n}$ and $b_i\in \mathbb{R}$, for simplicity
we assume the input layer has $n$ nodes and the output layer has only one node, while one hidden layer with $p$ nodes is introduced.
The whole neural network is fully connected. {{
We denote by  $w_{jk}$  the  weight from node $j$ in the hidden layer to node $k$ in the input layer and for the input layer and $u_j$ the  weight from node  in the output layer to node $j$ in the hidden layer.
In both  the hidden layer and output layer, the sigmoid activation function $\sigma(a) = \frac{1}{1+e^{-a}}$}} is introduced into this model and the $l_2$ loss is applied in the output layer.
{{As a result, the mathematical formulation for this NN model can be written as
$$
\min_{ \bw_{1}, \cdots, \bw_p, {\bf u} } \ \overbrace{\frac{1}{2} \sum_{i=1}^m \left\| \sigma \left( \sum_{j=1}^{p}u_j \sigma\left(  \bw_{j}^T a_i\right) -b_i\right) \right\|^2}^{f(\bw_{1},\cdots,\bw_{p},{\bf u})} + g(\bw_{1},\cdots,\bw_{p},{\bf u}),
$$where $\bw_j:=(w_{j1},\dots, w_{jn})$, ${\bf u}:=(u_1,\dots, u_k\}$ and $g$ denotes the regularizer \cite{Reed1993Pruning}. Hence in general the function $f$ in the NN model is nonconvex which fulfills Assumption \ref{assum_sc_h}. }}

Moreover allowing the regularization term $g$ to be nonconvex further broadens the  range of applicability  of problem (\ref{Basic_Problem}). Indeed, nonconvex regularizers such as the smoothly clipped absolute deviation (SCAD) (\cite{Fan2001SCAD}) and the minimax concave penalty (MCP)  (\cite{Zhang2010MCP}) are known to induce ``better" sparser solutions in the sense that they induce nearly unbiased estimates which under some conditions are provably consistent, and the resulting estimator is continuous in the data which reduces instability in model prediction.  Hence studying the PG method with nonconvex $f$ and $g$ is an urgent task.

So far most of results  about the PG methods in the literature assume that the function $g$ is convex. In this case,  the proximal operator defined as in (\ref{Basic_Proximal_Problem})  is a single-valued map and we can define   the set of stationary points ${\cal X}$  as follows:
$$\bar{x}\in {\cal{X}} \Longleftrightarrow 0\in \nabla f(\bar x)+\partial g(\bar x) \Longleftrightarrow \bar  x=\hbox{Prox}_{g}^{\gamma}\left( \bar x - \gamma \nabla f(\bar x) \right),$$
where $\partial g (x)$ represents the subdifferential in the sense of convex analysis.
Following \cite[Assumption 2a]{Tseng2009coordinate}, we say  the Luo-Tseng error bound holds if for any $\xi \ge \inf_{x\in {\bbR^n}} F(x)$, there exist constant $\kappa > 0$ and $\epsilon > 0$, such that
\begin{equation}\label{Luo-Tseng-error-bound}
{\rm{dist}} (x,{\cal{X}}) \le\kappa\left\| x-\hbox{Prox}^{\gamma}_{g} \left( x - \gamma \nabla f(x) \right)  \right\|,\ \hbox{whenever}\ \  F(x)\le \xi,\ \left\|x- {\rm{Prox}}^{\gamma}_{g} \left( x - \gamma \nabla f(x) \right)  \right\| \le \epsilon.
\end{equation}
In  \cite{LuoTseng1993}, Luo and Tseng introduced a general framework of using the Luo-Tseng error bound
together with the assumption of the proper separation of isocost surfaces of $F$ on ${\cal X}$, i.e.,
\begin{equation}
\exists \delta>0 \mbox{ such that } x \in {\cal X}, y \in {\cal X},\ F(x)\not =F(y)\quad \Longrightarrow\quad \left\| x - y \right\|\ge \delta \label{isocost}
\end{equation}
 to prove the linear convergence of feasible descent methods which include the PG method for problem (\ref{Basic_Problem}) with $g(x)$ being an indicator function of a  closed convex set. Since the Luo-Tseng error bound is an abstract condition involving the proximal residue $\left\| x-\hbox{Prox}^{\gamma}_{g} \left( x - \gamma \nabla f(x) \right)  \right\|$ and the distance to the set of stationary points, it is not directly verifiable. Hence an important task is to find verifiable sufficient conditions based on functions $f$ and $g$ under which the Luo-Tseng error bound holds.
Unfortunately there are very few concrete cases where the  Luo-Tseng error bound condition holds. Nerveless, it is known that the Luo-Tseng error bound condition holds under one of the conditions (C1)-(C4), see e.g.  \cite{Tseng2009coordinate}.
\begin{assumption}[Structured Assumption]\label{assum_struc}
$f(x) = h(Ax)+\langle q,x\rangle$
 where $A$ is some given $m\times n$ matrix, $q$ is some given vector in $\bbR^n$, and $h:\bbR^m\rightarrow (-\infty,\infty]$ is  closed, proper, and convex with the properties that
   $h$ is continuously differentiable on dom$h$, assumed to be open and
 $h$ is strongly convex on any compact convex subset of dom$h$.
\end{assumption}

	\begin{itemize}
\item[{(C1)}] $f$ is strongly convex, $\nabla f$ is Lipschitz continuous, and $g$ is closed, proper, and convex.
\item[{(C2)}] $f$ satisfies Assumption \ref{assum_struc}, and $g$ has a polyhedral epigraph.
\item[{(C3)}] $f$ satisfies Assumption \ref{assum_struc}, $g$ is the group LASSO regularizer, i.e., $g(x) := \sum_{J \in \mathcal{J}}\omega_J\|x_J\|_2$, where $\omega_J \geq 0$ and $\mathcal{J}$ is a partition of $\{1,\ldots,n\}$, and the optimal solution set $\cal X$ is compact.
\item[{(C4)}] $f$ is quadratic, $g$ is polyhedral convex.
	\end{itemize}

Notice that the Luo-Tseng error bound (\ref{Luo-Tseng-error-bound}) is only defined for the case where the function $g$ is convex and hence the convergence rate analysis based on the Luo-Tseng error bound can be only used to study the case where $g$ is convex. Recently the celebrated {\em{Kurdyka-{\L}ojasiewicz}} (KL) property (see, e.g., \cite{bolte2007lojasiewicz,bolte2010characterizations}) has attracted much attention in the optimization community.
In the fully nonconvex setting where both $f$ and $g$ are nonconvex, under the KL property, it has been shown that the PG method converges to a stationary point  which lies in the set of limiting stationary point defined as
\begin{equation*} \label{limitings}  {\cal{X}}^L := \left\{x\, \big |\, 0\in \nabla f( x)+\partial g( x)\right\}, \end{equation*}
where $\partial g( x)$ is the limiting subdifferential defined as in Definition \ref{defi-subdiff} (see, e.g. \cite{AttouchBolte2013Convergence,Bolte2014}).
  In particular, it has been shown that if $F$ is a coercive KL  function with an exponent $\frac{1}{2}$ as defined in the following definition, the sequence generated by the PG method converges linearly to a limiting stationary point in ${\cal X}^L$.
(see \cite[Theorem 3.4]{FrankelGarrogos2015}).
\begin{definition}
For a proper closed function $\phi:{\bbR^n} \rightarrow (-\infty, \infty]$ and $\bar{x} \in$ dom$\phi$, we say that $\phi$ satisfies  the KL property at $\bar{x}$ with an exponent of $\frac{1}{2}$ if there exist $\kappa, \epsilon> 0$ such that
			\begin{equation*}
			\kappa \left(\phi(x)-\phi(\bar{x})\right)^{-\frac{1}{2}} \,{\hbox{dist}}(0,\partial \phi(x)) \ge 1,\ \forall x \in \mathbb{B}(\bar{x},\epsilon) \cap \{x ~|~ \phi(\bar{x}) < \phi(x) < +\infty \},
			\end{equation*}
 where $\partial \phi$ is the limiting subdifferential of $\phi$ and $\mathbb{B}(\bar{x},\epsilon)$ is the open ball centered at origin with radius $\epsilon$. A proper closed function $\phi$ satisfying the KL property at all points in dom$\partial \phi$ is called a KL function.
\end{definition}

The KL property with an exponent of $\frac{1}{2}$ is very powerful in the linear convergence analysis since it leads to the linear convergence for  the fully nonconvex problem  and  the proper separation condition (\ref{isocost}) is not required. The drawback of this approach is that it is not easy to verify whether a function satisfies the KL property with an exponent of $\frac{1}{2}$ or not. In order to make use of  the known KL functions with an exponent of $\frac{1}{2}$ in producing more KL functions with an exponent of $\frac{1}{2}$,
recently, \cite{LiPong2016calculus} studies various calculus rules for the KL exponent. In particular,
\cite[Theorem 4.1]{LiPong2016calculus} shows that  the Luo-Tseng error bound and the proper separation condition (\ref{isocost}) implies that  $F$ is a KL function with an exponent of $\frac{1}{2}$ (similar results can be found in \cite{drusvyatskiy2016nonsmooth}). This implication consequently covers the results in
\cite{Wen2017linear}. Building upon the calculus rules and the connection
with the Luo-Tseng error bound, which is known
to hold for under of of conditions  (C1)-(C4),
\cite{LiPong2016calculus} shows that some optimization models with underlying structures have objectives whose KL exponent is $\frac{1}{2}$.

In this paper, we focus on the fully nonconvex case where both $f$ and $g$ are not necessarily convex. Suppose that $x_0$ is a local optimal solution of problem (\ref{Basic_Problem}). Then it follows easily from the definition of the proximal subdifferential (see  Definition \ref{defi-subdiff}) that $0 \in \partial^\pi F(x_0)$, where $\partial^\pi F (x)$ represents the proximal subdifferential of $F$ at $x$. By virtue of Assumption \ref{assum_sc_h}, the function $f$ is continuously differentiable with Lipschitz gradients in its domain and consequently by  the calculus rules in Proposition \ref{calculus2} we have
$$0 \in \nabla f(x_0) + \partial^\pi g (x_0) .$$
Based on this observation, we may define the set of proximal stationary points as follows.
\begin{equation*}\label{ProOptimal_solution_set}
{{\cal{X}}}^\pi: = \left\{ x \ \big|\ 0 \in \nabla f(x) + \partial^\pi g (x) \right\}.
\end{equation*}
Since $\partial^\pi g(x) \subseteq \partial g(x)$ and the inclusion may be strict,  in general  ${\cal{X}}^\pi\subseteq  {\cal{X}}^L $ and   they coincide when $\partial^\pi g(x)=\partial g(x)$. Hence the optimality condition defining the proximal stationary set ${\cal{X}}^\pi$ provides a shaper necessary optimality condition for problem (\ref{Basic_Problem})   than the one for the limiting stationary set ${\cal{X}}^L$ while in the case where $g$ is semi-convex (see Definition \ref{Prop2.5}),
${\cal X}^\pi={\cal X}^L$ and when $g$ is convex, ${\cal X}={\cal X}^\pi={\cal X}^L$.
 Since the stationary condition in terms of the proximal subdifferential provides a shaper necessary optimality condition, it is natural to ask whether one can prove that the PG method converges to a proximal stationary point. To our knowledge, all  results in the literature for the case where  $g$ is nonconvex prove the convergence to a limiting stationary point  (see, e.g., \cite{AttouchBolte2013Convergence,Bolte2014,FrankelGarrogos2015}). In this paper, we  show that the PG method actually converges to the set of the proximal stationary points ${\cal{X}}^\pi$ instead of the set of  the limiting stationary points ${\cal{X}}^L$.

Despite widespread use of the Luo-Tseng error bound in the convergence analysis and the calculus rules for the KL exponent, the concrete cases where the Luo-Tseng error bound holds are still very limited.
Motivated by this observation, in the recent paper \cite{ZhouSo2017},
 the authors relate the Luo-Tseng error bound condition to some unified sufficient conditions under the  convex setting.
They verify the existence of the Luo-Tseng error bound in concrete applications under the dual strict complementarity assumption.
However, the unified approach in \cite{ZhouSo2017} leads no improvement to the cases (C1)-(C3).  Moreover an extra compactness assumption of the optimal solution set  is even required for the case (C2).
The recent paper \cite{Drusvyatskiy2016error} further illuminates and extends some of the results in \cite{ZhouSo2017} by dispensing with strong convexity of component functions.

The limited application of the Luo-Tseng error bound is perhaps {due to} the fact that, from the theory of error bounds, except for the case where the Hoffman's error bound holds or the Robinson's polyhedral multifunction theory \cite{Robinson1980Strongly} holds, most of the sufficient conditions for error bounds are point-based (depending on the point of interest). The advantage of using point-based error bound is the existence of well-studied verifiable sufficient conditions.

The main goal of this paper is to find an appropriate point-based error bound type condition to meet  increasing needs in convergence analysis, calculus for the KL exponent and other aspects whenever appropriate. The new condition is regarded as both
	\begin{itemize}
\item a weaker replacement for the Luo-Tseng error bound condition when $g$ is convex,
\item an extension of the Luo-Tseng error bound condition when $g$ is nonconvex.
	\end{itemize}
In particular, the new condition meets the following requirements simultaneously.
	\begin{itemize}
\item[{(R1)}] It estimates
the distance to a chosen set of stationary points  in terms of certain easily computable residue.
\item[{(R2)}] In the fully nonconvex setting, together with weaker  point-based version of the proper separation condition, it ensures the linear convergence of the PG method toward the set of chosen stationary points.
\item[{(R3)}] It serves as a sufficient condition for the KL property with an exponent of $\frac{1}{2}$ under some mild assumptions.
\item[{(R4)}] It is generally weaker than the Luo-Tseng error bound condition when $g$ is convex and more importantly, it is easier to  verify via variational analysis when $g$ is nonconvex.
\item[{(R5)}] In the full convex setting, it results in some improvements to the linear convergence of the PG method for some of  cases of  (C1)-(C3).
	\end{itemize}

Although we conduct most of our analysis on the PG method, actually we are interested in  extensions to other first-order algorithms. A natural question to be answered is:

\noindent
\textbf{Q: For analyzing the convergence behavior of a given first-order algorithm, how to determine an appropriate type of error bound condition?}

In this paper, using the PG method as an example, we introduce a new perturbation analysis technique in order to provide an answer to  the above {question}. By the PG iteration scheme (\ref{PG_method_basic}), using  the sum rules of proximal subdifferentials in Proposition \ref{calculus2}  we obtain
\begin{equation}\label{eqn4.2NEW}
 \frac{p_{k+1}}{\gamma} \in \nabla f \left( x^{k+1} + p_{k+1} \right) + \partial^\pi g \left(x^{k+1} \right ),
 \end{equation}
where  $p_{k+1}:=x^k - x^{k+1}$.
Inspired by (\ref{eqn4.2NEW}), we define  the set-valued map induced by the PG method
\begin{equation*}\label{SPG}
{\cal{S}}_{PG} \left(p\right) := \left\{x \ \big|\  \frac{p}{\gamma} \in \nabla f \left( x + p \right) + \partial^\pi g \left(x\right)\right\}.
\end{equation*}
If the set-valued map ${\cal S}_{PG}$ has a stability property called  {\em{calmness}} around $(0, \bar x)$, where $\bar x\in {\cal X}^\pi$ is a accumulation point of the sequence $\{x^k\}$, then  exist $\kappa>0$ and a neighborhood  $\mathbb{U}(\bar x)$ of $\bar x$ such that
\begin{equation*}
d(x, {\cal X}^{\pi}) \leq \kappa \|p\|, \quad \forall  x\in \mathbb{U}(\bar x)\cap  {\cal S}_{PG}(p).
\end{equation*} The calmness of ${\cal{S}}_{PG}$ turns out to ensure that
\begin{equation*}
{\hbox{dist}}\left( x^{k+1}, {{\cal{X}}}^\pi \right)\le \kappa \left\|x^k - x^{k+1} \right\|,
\end{equation*}
which is essentially required for the linear convergence of the PG method toward ${\cal{X}}^\pi$.

 The calmness for a set-valued map is a fundamental concept  in variational analysis; see, e.g., \cite{henrion2002calmness,henrion2005calmness}. Although the terminology of ``calmness'' was coined by Rockafellar and Wets in \cite{Rockafellar2009variational}, it was first introduced in Ye and Ye \cite[Definition 2.8]{Ye1997Necessary} as the pseudo upper-Lipschitz continuity taking into account that the calmness is weaker than both the pseudo-Lipschitz continuity of Aubin \cite{Aubin1984Lipschitz} and the upper-Lipschitz continuity of Robinson \cite{Robinson1975Stability}. Therefore the calmness condition can be verified by either the polyhedral multifunction theory of Robinson \cite{Robinson1980Strongly} or by the Mordukhovich criteria based on the limiting normal cone \cite{Mordukhovich2006variational}.
 Recently based on the directional limiting normal cone, weaker verifiable sufficient conditions for calmness have been established (see, e.g. \cite[Theorem 1]{Gfrerer2017NewCQ}).

The perturbation analysis technique motivates us to use  the calmness of the set-valued map ${\cal{S}}_{PG}$ in analyzing the linear convergence of the PG method instead of the stringent Luo-Tseng error bound.
However, in variational analysis, most of the sufficient conditions for calmness are given for   a canonically perturbed system, while ${\cal{S}}_{PG}$ is not.
In order to connect the calmness of ${\cal{S}}_{PG}$ to the well-established theories in variational analysis, we further prove that
provided $\gamma <\frac{1}{L}$, the calmness of ${\cal{S}}_{PG}$ is equivalent to the calmness of the canonically perturbed stationary point set-valued map \begin{equation}\label{Basic_perturbation_multifunction}
{\cal{S}}_{cano} (p):= \left\{x \ \big|\ p \in \nabla f(x) + \partial^\pi g(x) \right\},
\end{equation}
or equivalently the metric subregularity of its inverse map ${\cal S}_{cano}^{-1}(x)=\nabla f(x)+\partial^\pi g(x)$. In this paper we will demonstrate that the sufficient conditions for the metric subregularity of ${\cal S}_{cano}^{-1}$ via variational analysis provide useful tools for convergence behavior analysis. In particular, in the fully convex setting, the calmness of ${\cal S}_{cano}$ is satisfied automatically for the cases (C1)-(C3) without any compactness assumption on the solution set ${\mathcal {X}}$. This observation further justifies the advantage of using calmness of ${\cal S}_{cano}$ as a replacement for the Luo-Tseng error bound in linear convergence analysis.

It is well-known that the calmness of ${\cal S}_{cano}$  is equivalent to the KL property with an exponent of $\frac{1}{2}$ in the case where $g$ is convex.
In this paper, we show that when $g$ is semi-convex, together with a point-based version of the proper separation condition (see (\ref{PBisocost}) in Assumption \ref{PBisocostassp}), the calmness of ${\cal S}_{cano}$ implies the KL property with an exponent of $\frac{1}{2}$.

The perturbation analysis idea for the PG method sheds some light on answering question \textbf{Q}. Indeed, the error bound condition we are looking for is the calmness condition for a perturbed set-valued map which is determined by  the iterative scheme of the given first-order algorithm with the perturbation parameter being the difference between the two consecutive generated points. To illustrate this point,  we investigate the following two examples of the first-order methods  in section 6.
	\begin{itemize}
\item[{(Ex1)}] We focus on the alternating direction method of multipliers (ADMM) for solving convex minimization model with linear constraints. The iteration scheme of ADMM introduces  the canonical type perturbation to the optimality condition instead of the proximal type. Taking advantage of a specific feature of ADMM's iterative scheme by which part of the perturbation is automatically zero, the perturbation analysis technique motivates a partial calmness condition. The partial calmness which derives the linear convergence rate of ADMM, is generally weaker than known error bound conditions in the literature.
\item[{(Ex2)}] We conduct some discussion on the perturbation induced by the iteration schemes of the primal-dual hybrid gradient (PDHG) method.
	\end{itemize}

Based on all discussions we summarize the main contributions of this paper as follows.
\begin{itemize}
\item We have shown that the PG method converges to a proximal stationary point. This observation has never been made in the literature before.
\item We justify that the calmness of ${\cal S}_{cano}$ defined as in (\ref{Basic_perturbation_multifunction}) can be regarded as an appropriate point-based improvement for the Luo-Tseng error bound condition taking into consideration that it meets requirements (R1)-(R5) simultaneously.

\item We propose a perturbation analysis technique for finding an appropriate error bound condition for the linear convergence analysis of the PG method. This technique is also applicable for carrying out the linear convergence analysis for various other first-order algorithms such as ADMM, PDHG, PPA and etc.

\end{itemize}

The remaining part of this paper is organized as follows. Section 2 contains notations and preliminaries. Section 3 briefs the linear convergence analysis for the PG method under the PG-iteration-based error bound. Section 4 starts with the introduction of a perturbation analysis technique which determines an appropriate type of calmness conditions to ensure the PG-iteration-based error bound.
Calmness conditions of the various perturbed stationary points set-valued maps and their relationship with the Luo-Tseng error bound condition and KL property will also be presented in Section 4. In Section 5, verification of the desired calmness condition for both  structured convex problems and general nonconvex problems are presented. Section 6 is dedicated to the application of the perturbation analysis technique to convergence behavior analysis of ADMM and PDHG.

\section{Preliminaries and preliminary results}\setcounter{equation}{0}

We first give notation that will be used throughout the paper. The open unit ball and closed unit ball around zero are given by $\mathbb{B}$ and $\overline{\mathbb{B}}$, respectively. $\mathbb{B}(\bar{x},r) := \{x \in {\bbR^d} ~|~ \|x-\bar{x}\| < r \}$ denotes the open ball around $\bar{x} \in \mathbb{R}^d$ with radius $r > 0$. For two vectors $a,b\in \mathbb{R}^d$, we denote by $\langle a,b \rangle $ the inner product of $a$ and $b$. For any $x\in \mathbb{R}^d$, we denote by $\|x\|$ its $l_2$-norm and $\|x\|_1$ its $l_1$-norm.  By $o(\cdot)$ we mean that $o(\alpha)/\alpha \rightarrow 0$ as $\alpha \rightarrow 0$. $x \stackrel{D}{\to} \bar x$ means that $x\rightarrow \bar x$ with all  $x\in D$. For a
{ differentiable} mapping $P: \mathbb{R}^d \rightarrow \mathbb{R}^s$ and a vector $x\in \mathbb{R}^d$, we denote by $\nabla P(x)$ the Jacobian matrix of $P$ at $x$ if $s>1$ and the gradient vector if $s=1$.  For a function $\varphi: \mathbb{R}^d \rightarrow \mathbb{R}$, we denote by $\varphi_{x_i} (x)$ and $\nabla^2 \varphi (x) $ the partial derivative of $\varphi$ with respect to $x_i$ and the  Hessian matrix of $\varphi$ at $x$, respectively. For a set-valued map $\Phi: \bbR^d \rightrightarrows \bbR^s$,
 the graph of  $\Phi $ is defined as $gph\,\Phi := \{(x,y) \in \bbR^d \times \bbR^s ~|~ y \in \Phi(x) \}$ and its inverse map  is defined by $\Phi^{-1}(y):=\{x \in \mathbb{R}^d: y\in \Phi(x)\}$.

For a set-valued map $\Phi: \mathbb{R}^d \rightrightarrows \mathbb{R}^s $, we denote by
\begin{eqnarray*}
\limsup_{x\rightarrow x_0} \Phi(x)&:= &\left \{\xi \in \mathbb{R}^s \big | \begin{array}{l}
\exists \mbox{ sequences } x_k \rightarrow x_0, \xi_k \rightarrow \xi,\\
\mbox{with } \xi_k \in \Phi(x_k) \ \ \forall k=1,2,\dots
\end{array}  \right \}
\end{eqnarray*}
the  Kuratowski-Painlev\'{e} upper (outer) limit. We say that a set-valued map $\Phi: \mathbb{R}^d \rightrightarrows \mathbb{R}^s$  is outer semicontinuous (\textit{osc}) at $x_0$ if
$\displaystyle
\limsup_{x\rightarrow x_0} \Phi(x) \subseteq \Phi(x_0).
$

\begin{definition}\cite[Definitions 8.45 and 8.3 and comments on page 345]{Rockafellar2009variational} \label{defi-subdiff}
Let $\phi : \mathbb{R}^d \rightarrow [-\infty, \infty ]$ and $x_0\in dom \phi$.
The proximal subdifferential of $\phi$ at $x_0$ is the set
$$
\partial^\pi \phi(x_0) := \left \{  \xi \in \mathbb{R}^d  \big | \begin{array}{l}
\exists \sigma>0,\eta>0 \mbox{ s.t. } \\
 \phi(x) \geq \phi(x_0)+\langle \xi,x-x_0\rangle -\sigma \|x-x_0\|^2 \quad \forall x\in \mathbb{B}(x_0,\eta)\end{array}
\right \}.
$$
The limiting (Mordukhovich or basic)
subdifferential of $\phi$ at $x_0$
is the closed set
\begin{eqnarray*}\partial \phi(x_0)
&=& \left \{\xi \in \mathbb{R}^d\big | \exists x_k \rightarrow x_0, \mbox{ and } \xi_k
\rightarrow \xi \mbox{ with } \xi_k \in \partial^\pi \phi(x_k), \phi(x_k) \rightarrow \phi (x_0)\right \}.
\end{eqnarray*}
\end{definition}
For any $x_0 \in dom \phi$, the set-valued map $\partial \phi$ is \textit{osc} at $x_0$ with respect to $x_k \rightarrow x_0$ satisfying $\phi(x_k) \rightarrow \phi(x_0)$ (see, e.g., \cite[Proposition 8.7]{Rockafellar2009variational}).
 In the case where $\phi$ is a convex function, all subdifferentials coincide with the subdifferential in the sense of convex analysis, i.e.,
$$ \partial^\pi \phi(x_0)= \partial \phi(x_0) =\left\{ \xi\in \mathbb{R}^d\, \big |\, \phi(x)-\phi(x_0)\geq \langle \xi, x-x_0\rangle, \quad \forall x  \right\}.$$

\begin{proposition}[Sum rule for limiting subdifferential] \label{calculus} \cite[Exercise 8.8]{Rockafellar2009variational}
Let $\varphi:\mathbb{R}^d \rightarrow \mathbb{R}$ be  differentiable at $x_0$ and $\phi:\mathbb{R}^d \rightarrow [-\infty,\infty]$ be finite at $x_0$. Then
$$
\partial (\varphi + \phi) (x_0) = \nabla\varphi (x_0) + \partial\phi (x_0).
$$
\end{proposition}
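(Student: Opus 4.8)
\noindent\emph{Proof proposal.} The plan is to reduce the identity to the analogous one‑point rule for the proximal subdifferential and then pass to the limit using the sequential description of the limiting subdifferential in Definition~\ref{defi-subdiff}. First I would record the auxiliary fact that, at any point $x$ where $\varphi$ is $C^1$ with a locally Lipschitz gradient (the situation in this paper, where $\varphi=f$ and Assumption~\ref{assum_sc_h} holds), one has $\partial^\pi(\varphi+\phi)(x)=\nabla\varphi(x)+\partial^\pi\phi(x)$. This follows by directly comparing quadratic minorants: if $\xi\in\partial^\pi\phi(x)$, then $\phi(y)\ge\phi(x)+\langle\xi,y-x\rangle-\sigma\|y-x\|^2$ on a ball around $x$, while the descent lemma gives $\varphi(y)\ge\varphi(x)+\langle\nabla\varphi(x),y-x\rangle-\frac{L}{2}\|y-x\|^2$ on a ball around $x$; adding these inequalities yields $\nabla\varphi(x)+\xi\in\partial^\pi(\varphi+\phi)(x)$, and running the same argument with $-\varphi$ (still $C^{1,1}$ near $x$) gives the reverse inclusion, hence equality.

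Granting this, the ``$\supseteq$'' inclusion of the proposition is obtained by a limiting argument: given $\xi\in\partial\phi(x_0)$, pick $x_k\to x_0$, $\xi_k\to\xi$ with $\xi_k\in\partial^\pi\phi(x_k)$ and $\phi(x_k)\to\phi(x_0)$; for large $k$ the point $x_k$ lies in the neighborhood of $x_0$ where the one‑point rule is valid, so $\nabla\varphi(x_k)+\xi_k\in\partial^\pi(\varphi+\phi)(x_k)$; since $\nabla\varphi$ and $\varphi$ are continuous near $x_0$, one has $\nabla\varphi(x_k)+\xi_k\to\nabla\varphi(x_0)+\xi$ and $(\varphi+\phi)(x_k)\to(\varphi+\phi)(x_0)$, so by Definition~\ref{defi-subdiff}, $\nabla\varphi(x_0)+\xi\in\partial(\varphi+\phi)(x_0)$. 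For the reverse inclusion ``$\subseteq$'' I would either argue symmetrically — starting from $\eta\in\partial(\varphi+\phi)(x_0)$, using the one‑point rule to write $\eta_k-\nabla\varphi(x_k)\in\partial^\pi\phi(x_k)$, and noting $\phi(x_k)=(\varphi+\phi)(x_k)-\varphi(x_k)\to\phi(x_0)$ — or, more economically, apply the already‑proved ``$\supseteq$'' inclusion to the splitting $\phi=(\varphi+\phi)+(-\varphi)$, which gives $\partial\phi(x_0)\supseteq\partial(\varphi+\phi)(x_0)-\nabla\varphi(x_0)$, i.e.\ $\partial(\varphi+\phi)(x_0)\subseteq\nabla\varphi(x_0)+\partial\phi(x_0)$.

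The only genuinely delicate point is the auxiliary one‑point rule, and with it the regularity demanded of $\varphi$: the quadratic‑minorant comparison needs the $O(\|y-x\|^2)$ remainder control of the descent lemma (equivalently, strict differentiability) uniformly near the base points, not merely differentiability at the single point $x_0$; without it the limiting‑subdifferential identity can fail (e.g.\ for $\varphi(x)=x^2\sin(1/x)$, $\varphi(0)=0$, where $\varphi'(0)=0$ but $1\in\partial\varphi(0)$). In the setting of this paper this causes no difficulty, since the rule is used only with $\varphi=f$, which by Assumption~\ref{assum_sc_h} is $C^1$ with $L$‑Lipschitz gradient on the open set ${\rm dom}\,f$; accordingly the hypothesis ``$\varphi$ differentiable at $x_0$'' should be read as ``$\varphi$ strictly differentiable (e.g.\ $C^1$) around $x_0$'', which is also the form under which the rule appears in \cite{Rockafellar2009variational}. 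A last routine verification is that the sequences and balls used above remain inside ${\rm dom}\,f$ so that $\nabla\varphi$ and the descent lemma are available, which is immediate from openness of ${\rm dom}\,f$.
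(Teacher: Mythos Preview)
The paper does not prove this proposition at all: it is simply cited from \cite[Exercise 8.8]{Rockafellar2009variational} and stated without proof, so there is no ``paper's own proof'' to compare against. Your argument is therefore a genuine addition rather than a reproduction.

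Your strategy is sound and, in fact, its first step (the one‑point proximal rule under a locally Lipschitz gradient) is exactly what the paper proves immediately afterwards as Proposition~\ref{calculus2}; you have effectively derived Proposition~\ref{calculus} by combining Proposition~\ref{calculus2} with the sequential description of $\partial$ in Definition~\ref{defi-subdiff}, which is the natural route. Your observation about the hypothesis is also correct and worth emphasizing: the identity $\partial(\varphi+\phi)(x_0)=\nabla\varphi(x_0)+\partial\phi(x_0)$ can fail under mere differentiability at the single point $x_0$ (your $x^2\sin(1/x)$ example), and the standard statement in \cite{Rockafellar2009variational} for the limiting subdifferential indeed requires strict differentiability (or $C^1$) near $x_0$, not just differentiability at $x_0$. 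Since the paper only invokes the rule with $\varphi=f$ satisfying Assumption~\ref{assum_sc_h}, this discrepancy is harmless here, and you handle it correctly by reading the hypothesis as ``$C^1$ around $x_0$''. The symmetric argument (or the trick of applying ``$\supseteq$'' to $\phi=(\varphi+\phi)+(-\varphi)$) for the reverse inclusion is standard and correct.
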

Inspired by \cite[Exercise 2.10 and Proposition 2.11]{Clark}, we present the following calculus rule.
\begin{proposition}[Sum  rule for proximal subdifferential] \label{calculus2}
	Let $\varphi:\mathbb{R}^d \rightarrow \mathbb{R}$ be differentiable at $x_0$ with  $\nabla \varphi$   Lipschitz continuous around $x_0$ and $\phi:\mathbb{R}^d \rightarrow [-\infty,\infty]$ be finite at $x_0$. Then
	$$
	\partial^\pi (\varphi + \phi)(x_0) = \nabla\varphi (x_0) + \partial^\pi \phi (x_0).
	$$
\end{proposition}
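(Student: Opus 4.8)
The plan is to reduce the whole statement to one elementary estimate. Since $\nabla\varphi$ is Lipschitz continuous around $x_0$, say with modulus $L\ge 0$ on a ball $\mathbb{B}(x_0,r)$, the function $\varphi$ is trapped between two quadratics centered at $x_0$: for all $x\in\mathbb{B}(x_0,r)$,
\[
\left|\varphi(x)-\varphi(x_0)-\langle\nabla\varphi(x_0),x-x_0\rangle\right|\le \frac{L}{2}\|x-x_0\|^2 .
\]
This follows by writing $\varphi(x)-\varphi(x_0)=\int_0^1\langle\nabla\varphi(x_0+t(x-x_0)),x-x_0\rangle\,dt$ and bounding the integrand against $\langle\nabla\varphi(x_0),x-x_0\rangle$ via the Lipschitz inequality; this is the only place the hypothesis on $\varphi$ enters, and both one-sided consequences (an upper and a lower quadratic bound for $\varphi$) will be used.

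For the inclusion $\nabla\varphi(x_0)+\partial^\pi\phi(x_0)\subseteq\partial^\pi(\varphi+\phi)(x_0)$, I would take $v=\nabla\varphi(x_0)+\zeta$ with $\zeta\in\partial^\pi\phi(x_0)$, choose $\sigma>0$, $\eta>0$ witnessing $\zeta\in\partial^\pi\phi(x_0)$, and add the defining proximal inequality $\phi(x)\ge\phi(x_0)+\langle\zeta,x-x_0\rangle-\sigma\|x-x_0\|^2$ to the lower quadratic estimate $\varphi(x)\ge\varphi(x_0)+\langle\nabla\varphi(x_0),x-x_0\rangle-\frac{L}{2}\|x-x_0\|^2$. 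On $\mathbb{B}(x_0,\min\{r,\eta\})$ this gives
\[
(\varphi+\phi)(x)\ge(\varphi+\phi)(x_0)+\langle v,x-x_0\rangle-\Bigl(\sigma+\tfrac{L}{2}\Bigr)\|x-x_0\|^2 ,
\]
so $v\in\partial^\pi(\varphi+\phi)(x_0)$.

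For the reverse inclusion, take $v\in\partial^\pi(\varphi+\phi)(x_0)$ with witnesses $\sigma,\eta>0$, and on $\mathbb{B}(x_0,\min\{r,\eta\})$ subtract the upper quadratic estimate $\varphi(x)\le\varphi(x_0)+\langle\nabla\varphi(x_0),x-x_0\rangle+\frac{L}{2}\|x-x_0\|^2$ from the proximal inequality for $\varphi+\phi$, obtaining
\[
\phi(x)\ge\phi(x_0)+\langle v-\nabla\varphi(x_0),x-x_0\rangle-\Bigl(\sigma+\tfrac{L}{2}\Bigr)\|x-x_0\|^2 ;
\]
hence $v-\nabla\varphi(x_0)\in\partial^\pi\phi(x_0)$, i.e. $v\in\nabla\varphi(x_0)+\partial^\pi\phi(x_0)$.

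There is no genuine obstacle here; the argument is symmetric in the two directions and rests entirely on the two-sided quadratic bound. The one conceptual point worth flagging is why Lipschitz continuity of $\nabla\varphi$ (rather than mere differentiability) is needed: differentiability alone yields only a first-order expansion of $\varphi$ with an $o(\|x-x_0\|)$ remainder, which cannot be absorbed into the quadratic $\sigma$-term of the proximal subdifferential, whereas the Lipschitz hypothesis upgrades the remainder to the $O(\|x-x_0\|^2)$ form that merges cleanly with $\sigma\|x-x_0\|^2$. The remaining bookkeeping — intersecting the finitely many neighborhoods and checking the modified $\sigma$-constants stay positive and finite — is routine.
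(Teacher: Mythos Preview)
Your proof is correct and follows essentially the same approach as the paper: both arguments rest on the two-sided quadratic estimate $|\varphi(x)-\varphi(x_0)-\langle\nabla\varphi(x_0),x-x_0\rangle|\le C\|x-x_0\|^2$ coming from the Lipschitz gradient, and then add or subtract it from the relevant proximal inequality to pass between $\partial^\pi\phi$ and $\partial^\pi(\varphi+\phi)$. Your write-up is slightly more explicit (giving the integral justification and the constant $L/2$, and explaining why mere differentiability would not suffice), but the underlying idea is identical.
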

\begin{proof} Suppose that $\nabla \varphi$   Lipschitz continuous around $x_0$ with Lipschitz constant $\sigma'>0$. Then  there exists positive number $\eta'$ such that
\begin{equation}\label{C1+}
|\varphi(x) - \varphi(x_0)-\langle \nabla \varphi(x_0),x-x_0\rangle |\leq  \sigma' \|x-x_0\|^2, \quad \forall x\in \mathbb{B}(x_0,\eta').
\end{equation}
It follows straightforwardly from Definition \ref{defi-subdiff} and (\ref{C1+}) that
$$
	 \nabla\varphi (x_0) + \partial^\pi \phi (x_0) \subseteq \partial^\pi (\varphi + \phi)(x_0).
$$

Conversely, take any $\xi \in \partial^\pi (\varphi + \phi)(x_0)$. Then by definition,  there exist $\sigma>0,\eta>0$ such that
\begin{equation}\label{C1++}
\varphi(x) + \phi(x) \geq \varphi(x_0) + \phi(x_0) +\langle \xi,x-x_0\rangle -\sigma \|x-x_0\|^2, \quad \forall x\in \mathbb{B}(x_0,\eta).
\end{equation}
It follows from (\ref{C1+})-(\ref{C1++}) that we have
$$
\phi(x) \geq  \phi(x_0) +\langle \xi-\nabla \varphi(x_0),x-x_0\rangle -(\sigma'+\sigma) \|x-x_0\|^2, \quad \forall x\in \mathbb{B}(x_0,\min\{\eta',\eta\}),
$$
which implies $\xi-\nabla \varphi(x_0) \in \partial^\pi \phi (x_0)$ and thus we get the conclusion.
\end{proof}
We introduce a local version of a semi-convex function, see, e.g., \cite[Definition 10]{bolte2010characterizations}.
\begin{definition}Let $\phi : \mathbb{R}^d \rightarrow [-\infty, \infty ]$ and $x_0\in dom \phi$. We say $\phi$ is semi-convex around $x_0$ {with modulus $\rho>0$} if there exists $\eta>0$ such that the function
$\phi(x)+\frac{\rho}{2}\|x\|^2$ is convex on $\mathbb{B}(x_0,\eta)$. We say $\phi$ is semi-convex if it is semi-convex at every point in $dom \phi$ with unified modulus.
\end{definition}

The following result follows from  the calculus rules in Propositions \ref{calculus} and \ref{calculus2} immediately.
\begin{proposition}\label{Prop2.5}
Let $\phi : \mathbb{R}^d \rightarrow [-\infty, \infty ]$ and $x_0\in dom \phi$. If $\phi$ is semi-convex around $x_0$ on $\mathbb{B}(x_0,\eta)$ with $\eta>0$, then $\partial^\pi \phi(x)=\partial \phi(x)$ for all $x\in \mathbb{B}(x_0,\eta)$.
\end{proposition}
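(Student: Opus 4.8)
The plan is to peel off the smooth quadratic that witnesses semi-convexity and invoke the two sum rules just established, thereby reducing the claim to the known coincidence of all subdifferentials for a convex function. Let $\rho>0$ be the semi-convexity modulus; by hypothesis $\psi:=\phi+\frac{\rho}{2}\|\cdot\|^2$ is convex on $\mathbb{B}(x_0,\eta)$. I would write $\phi=\varphi+\psi$ with $\varphi(x):=-\frac{\rho}{2}\|x\|^2$, a $C^\infty$ function whose gradient $\nabla\varphi(x)=-\rho x$ is globally Lipschitz, so that both Proposition \ref{calculus} and Proposition \ref{calculus2} apply to the pair $(\varphi,\psi)$ at every $x\in\mathbb{B}(x_0,\eta)$ with $\phi(x)<\infty$ (when $\phi(x)=\infty$ both subdifferentials are empty and there is nothing to prove). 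Proposition \ref{calculus} then gives $\partial\phi(x)=-\rho x+\partial\psi(x)$ and Proposition \ref{calculus2} gives $\partial^\pi\phi(x)=-\rho x+\partial^\pi\psi(x)$, so it suffices to show $\partial^\pi\psi(x)=\partial\psi(x)$ for every $x\in\mathbb{B}(x_0,\eta)$.

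The inclusion $\partial^\pi\psi(x)\subseteq\partial\psi(x)$ holds for any function (this is the inclusion already recorded in the introduction), so I would fix $x\in\mathbb{B}(x_0,\eta)$ and prove the reverse. First I would check that every $\xi\in\partial\psi(x)$ satisfies the full subgradient inequality $\psi(y)\ge\psi(x)+\langle\xi,y-x\rangle$ for all $y\in\mathbb{B}(x_0,\eta)$; granting this, since $-\sigma\|y-x\|^2\le 0$ the proximal inequality holds a fortiori for any $\sigma>0$, hence $\xi\in\partial^\pi\psi(x)$. To obtain the subgradient inequality, take sequences $x_k\to x$, $\xi_k\to\xi$ with $\xi_k\in\partial^\pi\psi(x_k)$ and $\psi(x_k)\to\psi(x)$, all eventually inside the open ball. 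A brief convexity argument upgrades each proximal subgradient $\xi_k$ to a genuine convex subgradient of $\psi$ at $x_k$ relative to the ball: restrict the proximal inequality at $x_k$ to points $x_k+t(z-x_k)$ with $z\in\mathbb{B}(x_0,\eta)$, bound $\psi(x_k+t(z-x_k))$ from above by convexity, divide by $t$, and let $t\downarrow 0$. Passing to the limit in $\psi(z)\ge\psi(x_k)+\langle\xi_k,z-x_k\rangle$ for each fixed $z$, using $\psi(x_k)\to\psi(x)$, yields the claimed inequality at $x$. Substituting $\partial^\pi\psi(x)=\partial\psi(x)$ back into the two sum-rule identities gives $\partial^\pi\phi(x)=\partial\phi(x)$.

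The only point needing care --- and the expected obstacle --- is that $\psi$ is convex only on the ball $\mathbb{B}(x_0,\eta)$, not on all of $\mathbb{R}^d$, so the remark ``all subdifferentials of a convex function coincide'' cannot be quoted verbatim; the resolution is precisely that the ball is open, so the points $x_k$ furnishing a limiting subgradient at an interior point still lie in the region of convexity, which is what makes the argument above go through. Equivalently one may localize: for $x$ in the open ball choose $r>0$ with $\overline{\mathbb{B}}(x,r)\subseteq\mathbb{B}(x_0,\eta)$ and replace $\psi$, near $x$, by the globally convex function $\widetilde\psi(y):=\sup\{\psi(z)+\langle g_z,y-z\rangle:\ z\in\mathbb{B}(x,r/2),\ g_z\in\partial\psi(z)\}$, which coincides with $\psi$ on $\mathbb{B}(x,r/2)$; since $\partial^\pi$ and $\partial$ depend only on the local behavior of the function, the global convex-case coincidence then applies directly. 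Either way, the substantive work is done by the two sum rules, consistent with the assertion that the result follows immediately from them.
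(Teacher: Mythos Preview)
Your approach is exactly the one the paper has in mind: the paper offers no detailed proof but simply states that the result ``follows from the calculus rules in Propositions~\ref{calculus} and~\ref{calculus2} immediately,'' and your decomposition $\phi=-\tfrac{\rho}{2}\|\cdot\|^2+\psi$ together with the two sum rules is precisely how that immediacy is realized. Your additional care about $\psi$ being convex only on the open ball (rather than globally) is a legitimate technical point that the paper suppresses, and your localization argument handling it is correct.
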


\begin{definition}\label{limitingnormalcone} \cite{Mordukhovich2006variational,Rockafellar2009variational} Let $D \subseteq {\bbR^d}$ and   $\bar{x}\in D$. The (Bouligand-Severi) tangent/contingent cone to $D$ at $\bar x$ is a  cone defined as
$$
{\cal{T}}_{D} \left(\bar{x}\right):= \left\{u \in {\bbR^d}\ \big|\ \exists\ u^k \rightarrow u, t^k \downarrow 0\ {\mbox{with}}\ \bar{x} + t^k u^k \in D\ {\hbox{for each}}\ k \right\}.
$$
The regular normal cone to $D$ at $\bar{x}$ is defined as
	$$
	{\widehat{\cal{N}}}_{D} \left(\bar{x}\right):= \left\{v \in  {\bbR^d}\ \big|\ \langle v, x - \bar{x} \rangle \leq o \left( \left\| x - \bar{x} \right\| \right) \ {\hbox{for each}}\ x\in D \right\}.
	$$The limiting normal cone to $D$ at $\bar{x}$ is defined as
	$$
	{\cal{N}}_{D} \left(\bar{x}\right):= \limsup_{x\stackrel{D}{\to}\bar{x} }{\widehat{\cal{N}}}_{D} \left( \bar{x}\right) = \left\{ v \in {\bbR^d}\, \big| \ \exists \ x^k \stackrel{D}{\to} \bar{x}, v^{k}\rightarrow
	 v\
	\mbox{with}\ v^k \in {\widehat{\cal{N}}}_{D} \left(x^k\right) \ {\hbox{for each}}\  k \right\}.$$
	\end{definition}
Recently, {a directional version of the  limiting normal cone was introduced in \cite{DirNorCone} and extended to general Banach spaces by Gfrerer \cite{Gfrerer2013directional}.}
\begin{definition}
	Let $D \subseteq {\bbR^d}$ and $\bar x\in D$.  Given $d \in {\bbR^d}$, the directional limiting normal cone to $D$ at $\bar{x}$ in the direction $d$ is defined as
	$$
	{\cal{N}}_D \left(\bar{x}; d\right) = \left\{ v \in {\bbR^d}\, \big| \ \exists \,
	t^k \downarrow 0,  v^k \rightarrow v, d^k \rightarrow d \mbox{ with } v^k \in {\widehat{\cal{N}}}_D \left(\bar{x} + t^k d^k\right) \ {\hbox{for each}}\ k \right\}.
	$$
\end{definition}	
By definition, it is easy to see that $	{\cal{N}}_D \left(\bar{x}; d\right)\subseteq	{\cal{N}}_D \left(\bar{x}\right)$ and $	{\cal{N}}_D \left(\bar{x}; 0\right)=	{\cal{N}}_D \left(\bar{x}\right)$.

All these definitions are fundamental in variational analysis, and the following lemma will be useful in this paper.
\begin{lemma}\label{lemma3.5}\cite[Proposition 3.3]{YeZhou2017}\label{TanDerivable} Let $D \subseteq {\bbR^d},D = D_1 \times \ldots \times D_m $ be the Cartesian product of the closed sets ${D}_i$ and $\bar{x} = (\bar{x}_1, \ldots, \bar{x}_m) \in D$. Then
$${\mathcal{T}}_{D} \left(\bar{x}\right) \subseteq {\mathcal{T}}_{D_1} \left(\bar{x}_1\right) \times \ldots \times {\mathcal{T}}_{D_m} \left(\bar{x}_m\right),$$
and for every $d = (d_1, \ldots, d_m)\in {\mathcal{T}}_{D} \left(\bar{x}\right)$ one has
$${\mathcal{N}}_{D} \left(\bar{x}; d\right) \subseteq {\mathcal{N}}_{D_1} \left(\bar{x}_1; d_1\right) \times \ldots \times {\mathcal{N}}_{D_m} \left(\bar{x}_{m}; d_{m}\right).$$
Furthermore, equalities hold in both inclusions if all except at most one of $D_i$ for $i= 1, \ldots, m$, are directionally regular
at $\bar{x}_i$ (see \cite[Definition 3.3]{YeZhou2017} for the definition of directional regularity). In particular, a set that is either convex or the union of finitely many convex polyhedra sets is directionally regular. Moreover the second-order cone complementarity set is also shown to be directionally regular in \cite[Theorem 6.1]{YeZhou2017}.
\end{lemma}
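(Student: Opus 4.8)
The plan is to prove the three assertions in order, reducing everything to the componentwise behaviour of the regular normal cone. The first inclusion is a direct unwinding of definitions: if $u=(u_1,\dots,u_m)\in\mathcal{T}_{D}(\bar x)$, choose $u^k\to u$ and $t^k\downarrow 0$ with $\bar x+t^k u^k\in D$; reading this blockwise gives $\bar x_i+t^k u_i^k\in D_i$ with $u_i^k\to u_i$, hence $u_i\in\mathcal{T}_{D_i}(\bar x_i)$ for every $i$, which is the claimed inclusion.

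For the directional normal cone inclusion I would first record the elementary product formula for the regular normal cone, $\widehat{\mathcal{N}}_{D}(x)=\widehat{\mathcal{N}}_{D_1}(x_1)\times\cdots\times\widehat{\mathcal{N}}_{D_m}(x_m)$ for $x=(x_1,\dots,x_m)\in D$; this is immediate from the $o(\cdot)$-characterisation of $\widehat{\mathcal{N}}$, since the product of the blockwise estimates is exactly the estimate for the product set. Now take $v=(v_1,\dots,v_m)\in\mathcal{N}_{D}(\bar x;d)$ with witnesses $t^k\downarrow 0$, $v^k\to v$, $d^k\to d$, $v^k\in\widehat{\mathcal{N}}_{D}(\bar x+t^k d^k)$. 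The product formula forces $v_i^k\in\widehat{\mathcal{N}}_{D_i}(\bar x_i+t^k d_i^k)$, and letting $k\to\infty$ in each block yields $v_i\in\mathcal{N}_{D_i}(\bar x_i;d_i)$; applying the first inclusion to $d\in\mathcal{T}_{D}(\bar x)$ shows that $d_i\in\mathcal{T}_{D_i}(\bar x_i)$ so the set $\mathcal{N}_{D_1}(\bar x_1;d_1)\times\cdots\times\mathcal{N}_{D_m}(\bar x_m;d_m)$ is the correct right-hand side.

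The equalities are where the work lies. The reverse tangent-cone inclusion $\mathcal{T}_{D_1}(\bar x_1)\times\cdots\times\mathcal{T}_{D_m}(\bar x_m)\subseteq\mathcal{T}_{D}(\bar x)$ follows once all but one of the $D_i$ are geometrically derivable at $\bar x_i$ (which the directional regularity hypothesis ensures): one takes the derivable arcs realizing the tangent vectors in those blocks, re-times them against the defining sequence $t^k\downarrow 0$ of a tangent vector in the remaining block, and recombines. For the reverse normal-cone inclusion, say $D_2,\dots,D_m$ are directionally regular at $\bar x_2,\dots,\bar x_m$ and pick $v_i\in\mathcal{N}_{D_i}(\bar x_i;d_i)$ for each $i$; each $v_i$ comes with its own sequence $t_i^k\downarrow 0$, and these clocks need not agree. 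Fixing the master sequence $t^k:=t_1^k$ coming from the possibly non-regular block, directional regularity of each $D_i$ with $i\ge 2$ allows one to re-extract, along the \emph{same} $t^k$, sequences $d_i^k\to d_i$ and $v_i^k\to v_i$ with $v_i^k\in\widehat{\mathcal{N}}_{D_i}(\bar x_i+t^k d_i^k)$; concatenating the blocks and applying the product formula for $\widehat{\mathcal{N}}$ in the reverse direction then gives $(v_1,\dots,v_m)\in\mathcal{N}_{D}(\bar x;d)$. The listed instances — convex sets, finite unions of convex polyhedra, and the second-order cone complementarity set — are exactly the cases in which this re-timing is licensed, by \cite{YeZhou2017}.

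The main obstacle is precisely this clock-synchronisation in the equality part: the directional limiting normal cone is an outer limit along an \emph{arbitrary} sequence $t^k\downarrow 0$, so to glue blockwise directional normals into one for the product one must produce them all along a common sequence, and producing a single tangent arc per block at the same scale. Isolating the hypothesis that guarantees this is the whole point of directional regularity, so the substantive step is to quote and apply the appropriate consequence of \cite[Definition 3.3]{YeZhou2017} rather than to re-derive it from scratch.
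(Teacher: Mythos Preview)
The paper does not actually prove this lemma: it is stated with an explicit citation to \cite[Proposition 3.3]{YeZhou2017} and no argument is given in the present paper. So there is no ``paper's own proof'' to compare against here.

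That said, your outline is a faithful reconstruction of how such a proof goes. The two inclusions are handled correctly: the tangent-cone inclusion is immediate from projecting the defining sequences onto blocks, and the directional normal-cone inclusion follows cleanly from the product formula $\widehat{\mathcal N}_D(x)=\prod_i \widehat{\mathcal N}_{D_i}(x_i)$ together with the definition of the directional limiting normal cone. Your identification of the real obstruction in the equality part --- that the blockwise witnessing sequences $t_i^k\downarrow 0$ need to be synchronised to a common clock, and that directional regularity is precisely the hypothesis that licenses this re-timing --- is exactly right, and matches the role this notion plays in \cite{YeZhou2017}. The only point to flag is that you are (appropriately) deferring the substantive step to the cited reference rather than proving it; since the present paper does the same, this is entirely in keeping with how the result is used here.
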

Next we review some concepts  of  stability of a set-valued map.
\begin{definition}
\cite{Robinson1975Stability}  A set-valued map ${\cal S}: \bbR^s \rightrightarrows \bbR^d$ is said to be upper-Lipschitz  around $(\bar p, \bar x) \in gph {\cal S}$  if there exist a neighborhood $\mathbb{U}(\bar p)$ of $\bar p$ and $\kappa\geq 0$ such that
{\rm{\begin{equation}
{\cal S} (p)   \subseteq {\cal S} (\bar p) + \kappa \left\| p-\bar p\right\| \overline{\mathbb{B}},\ \ \forall p \in \mathbb{U}(\bar p).
\end{equation}}}
\end{definition}
\begin{definition}\label{pseudo}
\cite{Aubin1984Lipschitz} A set-valued map ${\cal S}: \bbR^s \rightrightarrows \bbR^d$ is said to be pseudo-Lipschitz (or locally Lipschitz like or has the Aubin property) around $(\bar p, \bar x) \in gph  {\cal S} $   if there exist a neighborhood $\mathbb{U}(\bar p)$ of $\bar p$, a neighborhood $\mathbb{U}(\bar x)$ of $\bar x$ and $\kappa\geq 0$ such that
{\rm{\begin{equation}
{\cal S}(p)  \cap \mathbb{U}(\bar x) \subseteq {\cal S}(p') + \kappa \left\| p-p'\right\| \overline{\mathbb{B}}, \ \ \forall p, p'\in \mathbb{U}(\bar p).
\end{equation}}}Equivalently, ${\cal S}$ is pseudo-Lipschitz  around $(\bar p, \bar x) \in gph {\cal S}$ if there exist a neighborhood $\mathbb{U}(\bar p)$ of $\bar p$, a neighborhood $\mathbb{U}(\bar x)$ of $\bar x$ and $\kappa\geq 0$ such that
$$d(x, {\cal S}(p')) \leq \kappa d(p', {\cal S}^{-1}(x)), \quad \forall p' \in \mathbb{U}(\bar p), x\in \mathbb{U}(\bar x),$$
i.e., the inverse map ${\cal S}^{-1}$ is metrically regular around $(\bar x, \bar p)$.
\end{definition}
Both upper-Lipschitz continuity and the pseudo-Lipschitz continuity are stronger than the following concept which plays a key role in analyzing the linear convergence of some algorithms.
\begin{definition}\label{calm}
\cite{Ye1997Necessary,Rockafellar2009variational} A set-valued map ${\cal S}: \bbR^s \rightrightarrows \bbR^d$ is said to be calm (or pseudo upper-Lipschitz continuous)  around $(\bar p, \bar x) \in gph \mathcal{S}$ if there exist a neighborhood $\mathbb{U}(\bar p)$ of $\bar p$, a neighborhood $\mathbb{U}(\bar x)$ of $\bar x$ and $\kappa\geq 0$ such that
{\rm{\begin{equation}\label{calmness-defi}
{\cal S}(p) \cap \mathbb{U}(\bar x) \subseteq {\cal S}(\bar p) + \kappa \left\| p-\bar p\right\| \overline{\mathbb{B}}, \ \ \forall p \in \mathbb{U}(\bar p).
\end{equation}}}Equivalently, ${\cal S}$ is calm  around $(\bar p, \bar x) \in gph  {\cal S} $ if there exist a neighborhood $\mathbb{U}(\bar p)$ of $\bar p$, a neighborhood $\mathbb{U}(\bar x)$ of $\bar x$ and $\kappa\geq 0$ such that
\begin{equation}\label{Metric_subregularity_0}
{\rm{dist}}(x, {\cal S}(\bar p)) \leq \kappa\, {\rm{dist}}(\bar p, {\cal S}^{-1}(x)\cap \mathbb{U}(\bar p)), \quad \forall x\in \mathbb{U}(\bar x),
\end{equation}
i.e., the inverse map ${\cal S}^{-1}$ is metrically subregular around $(\bar x, \bar p)$.
\end{definition}
\begin{definition}\label{icalm}
\cite{Dontchev2009implicit} A set-valued map ${\cal S}: \bbR^s \rightrightarrows \bbR^d$ is said to be isolated calm around $(\bar p, \bar x) \in gph {\cal S}$ if there exist a neighborhood $\mathbb{U}(\bar p)$ of $\bar p$, a neighborhood $\mathbb{U}(\bar x)$ of $\bar x$ and $\kappa\geq 0$ such that
{\rm{\begin{equation*}
{\cal S}(p) \cap \mathbb{U}(\bar x) \subseteq \bar x + \kappa \left\| p-\bar p\right\| \overline{\mathbb{B}}, \ \ \forall p \in \mathbb{U}(\bar p).
\end{equation*}}}Equivalently, ${\cal S}$ is isolated calm  around $(\bar p, \bar x) \in gph {\cal S}$   if there exist
a neighborhood $\mathbb{U}(\bar p)$ of $\bar p$, a neighborhood $\mathbb{U}(\bar x)$ of $\bar x$ and $\kappa\geq 0$ such that
\begin{equation*}
\|x-\bar x\|\leq \kappa\, {\rm{dist}}(\bar p, {\cal S}^{-1}(x)\cap \mathbb{U}(\bar p)), \quad \forall  x\in \mathbb{U}(\bar x),
\end{equation*}
i.e., the inverse map ${\cal S}^{-1}$ is strongly metrical subregular around $(\bar x, \bar p)$.
\end{definition}
Note that by \cite[Exercise 3H.4]{Dontchev2009implicit}, the neighborhood $\mathbb{U}(\bar p)$ in Definitions \ref{calm} and \ref{icalm} can be equivalently replaced by the whole space $\mathbb{R}^d$.

Let
${\cal S}(p):=\{x\in {\bbR^d}| p\in -P(x)+ D\}$ where $ P(x): \mathbb{R}^d \rightarrow \mathbb{R}^s$  is locally Lipschitz and $D \subseteq \mathbb{R}^s$ is closed. Then the set-valued map ${\cal S}$ is {\em{calm}} at $(0, \bar x)$ if and only if ${\cal S}^{-1}$ is metrically subregular at $(\bar x, 0)$.  For convenience we summarize some verifiable sufficient conditions for the {\em{calmness}} of ${\cal S}$; see more criteria for calmness in \cite[Theorem 2]{Gfrerer2017NewCQ} and  \cite{henrion2002calmness,henrion2005calmness,YeZhou2017}.
\begin{proposition}\label{sufficient-MSCQ-new}
Let $\Omega:=\{ x\in {\bbR^d}| P(x)\in D\}$ where  $D$ is closed near $\bar x\in \Omega$. If $P(x)$ is continuously differentiable, let $\mathcal {T}^{lin}_\Omega \left(\bar{x}\right):=\{w\in \mathbb{R}^d|\nabla P(\bar x) w\in {\cal T}_D(P(\bar x))\}$ be the linearized cone of $\Omega$ at $\bar x$. Then the set-valued map ${\cal S}(p):=\{x\in \mathbb{R}^d| p\in -P(x)+ D\}$
is calm at $(0, \bar x)$ if one of the following condition holds.
\begin{enumerate}
	\item[1.] Linear CQ holds (see, e.g.,  \cite[Theorem 4.3]{Ye2000constraint}): $P(x)$ is piecewise affine and $D$ is the union of finitely many convex polyhedra sets.
	
	\item[2.] No nonzero abnormal multiplier constraint
	qualification (NNAMCQ) holds at $\bar x$ (see, e.g., \cite[Theorem 4.4]{Ye2000constraint})
	$$
	0\in \partial \langle P, \lambda \rangle (\bar{x}), \,\, \lambda \in \mathcal {N}_{D} (P(\bar{x})) \quad \Longrightarrow \quad \lambda =0.
	$$
	
\item[3.]
	First-order sufficient condition for metric subregularity (FOSCMS) at $\bar x$ for the system $P(x) \in D$ at $\bar x$  with $P$ continuously differentiable at $\bar{x}$ \cite[Corollary 1]{Gfrerer2016Lipschitz}: for every $0 \neq w \in \mathcal {T}^{lin}_\Omega \left(\bar{x}\right)$,  one has
	$$
	\nabla P(\bar{x})^T \lambda = 0, \,\, \lambda \in \mathcal {N}_{D} (P(\bar{x}); \nabla P(\bar{x}) w) \quad \Longrightarrow \quad \lambda =0.
	$$
	
	\item[4.] Second-order sufficient condition for metric subregularity (SOSCMS)  at $\bar x$ for the system $P(x) \in D$ with P twice differentiable at $\bar{x}$ and $D$ is the union of finitely many convex polyhedra sets \cite[Theorem 2.6]{gfrerer2014optimality}: for every $0 \neq w \in \mathcal {T}^{lin}_\Omega \left(\bar{x}\right)$ one has
	$$
	\nabla P(\bar{x})^T \lambda = 0, \,\, \lambda \in \mathcal {N}_{D} (P(\bar{x}); \nabla P(\bar{x}) w), \,\, w^T \nabla^2 (\lambda^T P)(\bar{x}) w \geq 0 \quad \Longrightarrow \quad \lambda =0.
	$$
\end{enumerate}
\end{proposition}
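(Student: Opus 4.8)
The statement is, in essence, a catalogue of verifiable sufficient conditions collected from the literature, so the plan is to reduce the calmness of $\mathcal{S}$ to one single, thoroughly studied notion --- the metric subregularity of the constraint system $P(x)\in D$ at $\bar x$ --- and then invoke the appropriate cited result for each of the four conditions. Concretely, I first record the elementary identities $\mathcal{S}^{-1}(x)=-P(x)+D$ and $\mathcal{S}(0)=\{x\mid P(x)\in D\}=\Omega$, from which $\mathrm{dist}\bigl(0,\mathcal{S}^{-1}(x)\bigr)=\mathrm{dist}(P(x),D)$.

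Next I would use the distance characterization of calmness in Definition \ref{calm} together with the remark following Definition \ref{icalm} (which permits replacing the neighborhood $\mathbb{U}(\bar p)$ by the whole space). This shows that calmness of $\mathcal{S}$ at $(0,\bar x)$ is \emph{equivalent} to the existence of $\kappa>0$ and a neighborhood $\mathbb{U}(\bar x)$ such that
\begin{equation*}
\mathrm{dist}(x,\Omega)\ \le\ \kappa\,\mathrm{dist}(P(x),D),\qquad \forall x\in \mathbb{U}(\bar x),
\end{equation*}
i.e. to the metric subregularity at $(\bar x,0)$ of the multifunction $M(x):=P(x)-D$ (equivalently, a local error bound for the system $P(x)\in D$ at $\bar x$). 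It therefore suffices to verify that each of conditions 1--4 implies this local error bound.

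For condition 1, when $P$ is piecewise affine and $D$ is a finite union of convex polyhedra, $\mathrm{gph}\,M$ is the image under a linear projection of a finite union of polyhedra, hence itself a finite union of polyhedra; thus $M$ is a polyhedral multifunction and Robinson's theorem \cite{Robinson1980Strongly} yields that $M$ is upper-Lipschitz --- in particular metrically subregular --- at every point of its graph, which is precisely \cite[Theorem 4.3]{Ye2000constraint}. For condition 2, NNAMCQ is exactly the Mordukhovich coderivative criterion forcing $M$ to be metrically \emph{regular} at $(\bar x,0)$, which a fortiori gives metric subregularity; this is \cite[Theorem 4.4]{Ye2000constraint} (see also \cite{Mordukhovich2006variational}). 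Conditions 3 and 4 are the directional first- and second-order sufficient conditions for metric subregularity of the system $P(x)\in D$, quoted from \cite[Corollary 1]{Gfrerer2016Lipschitz} and \cite[Theorem 2.6]{gfrerer2014optimality} respectively, the latter exploiting that $D$ is a finite union of convex polyhedra to convert the directional normal cone data into polyhedral information and to weaken $C^2$-type hypotheses on $P$ to twice differentiability at $\bar x$. Combining these four implications with the reduction of the previous step completes the argument.

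Since every ingredient is either elementary bookkeeping or a cited theorem, there is no deep obstacle here; the points needing care are (i) the reduction step --- confirming that the ``$\cap\,\mathbb{U}(\bar p)$'' in Definition \ref{calm} can genuinely be dropped and that the distance identities hold in the set-valued sense --- and (ii) aligning the multiplier and directional-limiting-normal-cone formulations in conditions 2--4 exactly with the hypotheses of the cited results, in particular checking that $\partial\langle P,\lambda\rangle(\bar x)$ is the correct coderivative expression for $M$ when $P$ is only locally Lipschitz. Were one to insist on a self-contained proof of condition 3 or 4, the hard part would be reproducing Gfrerer's directional-limiting-normal-cone machinery, which I would rather invoke than re-derive.
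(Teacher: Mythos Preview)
Your proposal is correct and matches the paper's treatment: the paper does not give a formal proof of this proposition but states it as a summary of cited results, and the subsequent Remark~\ref{remark} supplies exactly the reasoning you outline --- polyhedrality plus Robinson's theorem for condition~1, the Mordukhovich criterion and pseudo-Lipschitz continuity for condition~2, and direct invocation of Gfrerer's results for conditions~3 and~4. Your explicit reduction of calmness of $\mathcal S$ to metric subregularity of $x\mapsto P(x)-D$ via $\mathcal S^{-1}(x)=-P(x)+D$ is implicit in the paper and is the right bridge to the cited literature.
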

\begin{remark}\label{remark}
Recall that a set-valued map is called a polyhedral multifunction if its graph is the union of finitely many polyhedral convex sets. For the case Proposition \ref{sufficient-MSCQ-new}(1), since the set-valued map ${\cal S}$ is a polyhedral multifunction, by \cite[Proposition 1]{Robinson1980Strongly},  the set-valued map is upper-Lipschitz and hence calm around every point of the graph  of ${\cal S}$. By the Mordukhovic criteria (see, e.g., \cite{Mordukhovich2006variational}),
NNAMCQ implies the pseudo-Lipschitz continuity of the set-valued map ${\cal S}$ (see e.g., \cite[Theorem 4.4]{Ye2000constraint}).
FOSCMS in  (3) holds automatically if
$$w \in \mathcal {T}^{lin}_\Omega \left(\bar{x}\right) \Longrightarrow w=0.$$
In this case, according to  the graphical derivative criterion for strong metric subregularity (see e.g., \cite{Dontchev2009implicit}),  the set-valued map ${\cal S}$ is in fact isolated calm. SOSCMS is obvious weaker than FOSCMS in general. Since the directional normal cone is in general a smaller set than the limiting normal cone, FOSCMS is in general weaker than NNAMCQ. But in the case when either $\nabla P(\bar x)$ does not have full column rank or $D$ is convex and there is $w\not =0$ such that $w\in \mathcal {T}^{lin}_\Omega \left(\bar{x}\right)$, then FOSCMS is equivalent to NNAMCQ \cite[Theorem 4.3]{YeZhou2017}. When $D=D_1\times \cdots \times D_{m}$ is the Cartesian product of closed sets $D_i$, by Lemma
\ref{TanDerivable}, we have for $\bar{x} = (\bar{x}_1, \ldots, \bar{x}_m) \in D$, $d = (d_1, \ldots, d_{m})\in {\mathcal{T}}_{D} \left(\bar{x}\right)$
\begin{eqnarray*}
&& {\mathcal{T}}_{D} \left(\bar{x}\right) \subseteq {\mathcal{T}}_{D_1} \left(\bar{x}_1\right) \times \ldots \times {\mathcal{T}}_{D_{m}} \left(\bar{x}_{m}\right),\\
&& {\mathcal{N}}_{D} \left(\bar{x}; d\right) \subseteq {\mathcal{N}}_{D_1} \left(\bar{x}_1; d_1\right) \times \ldots \times {\mathcal{N}}_{D_{m}} \left(\bar{x}_{m}; d_{m}\right).
\end{eqnarray*} Therefore we may replace $ {\cal T}_D(P(\bar x))$ and $\mathcal {N}_{D} (P(\bar{x}); P(\bar{x}) w)$
by ${\mathcal{T}}_{D_1} \left(P_1(\bar{x})\right) \times \ldots \times {\mathcal{T}}_{D_{m}}
\left(P_m(\bar{x})\right)$ and ${\mathcal{N}}_{D_1} \left(P_1(\bar{x}); [P(\bar{x}) w]_1\right) \times \ldots \times {\mathcal{N}}_{D_{m}} \left(P_m(\bar{x}); [P(\bar{x}) w]_{m}\right)$, where $[P(\bar{x}) w]_{i}$ denotes  the $i$th component of the vector $P(\bar{x}) w$, respectively,  to obtain a sufficient condition for calmness.
These types of sufficient conditions would be stronger in general but equivalent to the original one if all expect at most one of the sets $D_i$ is directionally regular.
\end{remark}

We close this section with the following equivalence. Proposition \ref{equi_ms} improves the result \cite[Proposition 3]{Gfrerer2017NewCQ} in that ${gph}Q$ is not assumed to be closed. When ${gph}Q$ is not closed, the projection onto ${gph}Q$ may not exist.  However if one replaces the projection with an approximate one, then the proof would go through and so we omit the proof.
\begin{proposition}\label{equi_ms}
Let
$ P(x): \mathbb{R}^d \rightarrow \mathbb{R}^s$ and $Q: \mathbb{R}^d \rightrightarrows \mathbb{R}^s$ be a set-valued map. Assume that $P$ is  Lipschitz around $\bar{x}$, then the set-valued map $M_1(x):=P(x)+Q(x)$ is metrically subregular at $(\bar{x},0)$ if and only if the set-valued map
\begin{equation*}
M_2(x):=\begin{pmatrix}
x \\ -P(x)
\end{pmatrix} - {gph}Q
\end{equation*}
is metrically subregular at $(\bar{x},(0,0))$.
\end{proposition}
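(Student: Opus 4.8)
The plan is to establish both implications by working directly with the two residual functions $r_1(x):={\rm dist}(0,M_1(x))$ and $r_2(x):={\rm dist}\big((0,0),M_2(x)\big)$, after first observing that the two solution sets coincide. Indeed $0\in M_1(x)$ means $-P(x)\in Q(x)$, i.e. $(x,-P(x))\in{\rm gph}\,Q$, which is precisely $(0,0)\in M_2(x)$; hence $M_1^{-1}(0)=M_2^{-1}\big((0,0)\big)=:\Omega$ and in particular $\bar x\in\Omega$. Since metric subregularity of $M_i$ at the relevant reference point is exactly the existence of $\kappa>0$ and a neighborhood $\mathbb{U}(\bar x)$ of $\bar x$ with ${\rm dist}(x,\Omega)\le\kappa\,r_i(x)$ for all $x\in\mathbb{U}(\bar x)$, the whole proof reduces to comparing $r_1$ and $r_2$ on a neighborhood of $\bar x$, modulo a small shift of base point.

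First I would dispatch the easy direction. Restricting the infimum defining $r_2$ to the pairs $(u,v)\in{\rm gph}\,Q$ with $u=x$ gives
\begin{equation*}
r_2(x)=\inf_{(u,v)\in{\rm gph}\,Q}\sqrt{\|x-u\|^2+\|P(x)+v\|^2}\ \le\ \inf_{v\in Q(x)}\|P(x)+v\|=r_1(x).
\end{equation*}
Hence if $M_2$ is metrically subregular at $\big(\bar x,(0,0)\big)$ with modulus $\kappa$, then for $x$ near $\bar x$ we get ${\rm dist}(x,\Omega)\le\kappa\,r_2(x)\le\kappa\,r_1(x)$, so $M_1$ is metrically subregular at $(\bar x,0)$.

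For the converse — the substantive direction — I would use the local Lipschitz property of $P$ to transfer an approximate projection onto ${\rm gph}\,Q$ into an estimate for $r_1$. Fix $x$ near $\bar x$ and $\varepsilon>0$; since ${\rm gph}\,Q$ need not be closed, I pick $(u,v)\in{\rm gph}\,Q$ with $\|x-u\|^2+\|P(x)+v\|^2\le r_2(x)^2+\varepsilon^2$, so in particular $\|x-u\|\le r_2(x)+\varepsilon$. Because $r_2$ is continuous with $r_2(\bar x)=0$, for $x$ close enough to $\bar x$ and $\varepsilon$ small the point $u$ lies simultaneously in the neighborhood on which $P$ is, say, $\ell$-Lipschitz and in the neighborhood on which $M_1$ is metrically subregular with modulus $\kappa$. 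From $v\in Q(u)$ we have $P(u)+v\in M_1(u)$, whence
\begin{equation*}
r_1(u)\le\|P(u)+v\|\le\|P(u)-P(x)\|+\|P(x)+v\|\le\ell\|x-u\|+\|P(x)+v\|\le(\ell+1)\sqrt{r_2(x)^2+\varepsilon^2}.
\end{equation*}
Combining ${\rm dist}(x,\Omega)\le\|x-u\|+{\rm dist}(u,\Omega)\le\|x-u\|+\kappa\,r_1(u)$ with the last display yields ${\rm dist}(x,\Omega)\le\big(1+\kappa(\ell+1)\big)\sqrt{r_2(x)^2+\varepsilon^2}$ on a neighborhood of $\bar x$, and letting $\varepsilon\downarrow0$ gives ${\rm dist}(x,\Omega)\le\big(1+\kappa(\ell+1)\big)\,r_2(x)$, i.e. $M_2$ is metrically subregular at $\big(\bar x,(0,0)\big)$.

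The part requiring the most care is the neighborhood bookkeeping in the converse direction: one must ensure the approximate projection $u$ stays inside one fixed neighborhood on which both the Lipschitz estimate for $P$ and the metric-subregularity estimate for $M_1$ are available, uniformly over $x$ near $\bar x$ and $\varepsilon$ small. This is exactly where one invokes that $r_2$ is continuous — in fact locally Lipschitz, being the distance from $(x,-P(x))$ to the fixed set ${\rm gph}\,Q$ precomposed with the locally Lipschitz map $x\mapsto(x,-P(x))$ — together with $r_2(\bar x)=0$. The only other subtlety, and precisely what lets the closedness of ${\rm gph}\,Q$ required in \cite[Proposition 3]{Gfrerer2017NewCQ} be dropped, is working with approximate rather than exact projections and then passing to the limit $\varepsilon\downarrow0$: when $r_2(x)=0$ but ${\rm gph}\,Q$ has no exact nearest point, the same chain of estimates still gives ${\rm dist}(x,\Omega)\le\big(1+\kappa(\ell+1)\big)\varepsilon$ for every $\varepsilon>0$, hence ${\rm dist}(x,\Omega)=0$ and the inequality holds trivially.
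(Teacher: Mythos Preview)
Your proof is correct and follows precisely the approach indicated in the paper, which omits the details and only notes that the argument of \cite[Proposition~3]{Gfrerer2017NewCQ} goes through once exact projections onto ${\rm gph}\,Q$ are replaced by approximate ones. Your handling of the neighborhood bookkeeping and the passage $\varepsilon\downarrow0$ supplies exactly the missing details.
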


\section{Linear convergence under the PG-iteration-based error bound}\label{Sec3}\setcounter{equation}{0}
The purpose of this section is to obtain the linear convergence result (\ref{linear-convergence-objective})-(\ref{linear-convergence-sequence}) under a weak and basic error bound assumption (\ref{error-bound-condition-1}) along with a  proper separation of the stationary value condition around the accumulation point  (\ref{PBisocost}). The main result is summarized in Theorem \ref{Thm3.2}.
As a prerequisite of the analysis to be delineated, Lemma \ref{sufficient-cost-to-go} can be derived similarly as  related
results in the literature, see, e.g., \cite[Section 5.1]{AttouchBolte2013Convergence}. We state the results now and leave the   proof of Lemma \ref{sufficient-cost-to-go} to the appendix.

\begin{lemma}\label{sufficient-cost-to-go}
Let $\{ x^k \}$ be a  sequence generated by the PG method. Suppose that  $\gamma <\frac{1}{L}$. Then the following statements are true.
\begin{enumerate}
\item[(1)] {\em{Sufficient descent}}: there exists a constant $\kappa_1 > 0$ such that
\begin{equation}\label{Sufficient-descent-inequality-0}
F(x^{k+1}) - F(x^k)\le - \kappa_1 \left\| x^{k+1} - x^k \right\|^2.
\end{equation}
\item[(2)] {\em{Cost-to-go estimate}}: there exists a constant $\kappa_2>0$ such that
\begin{equation}\label{cost-to-go-estimate-0}
F(x^{k+1}) - F(x) \le \kappa_2 \left( \left\| x - x^{k+1} \right\|^2 + \left\| x^{k+1} - x^k \right\|^2 \right), \quad \forall x.
\end{equation}
\end{enumerate}
\end{lemma}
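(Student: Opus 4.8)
The plan is to derive both estimates from a single ``three‑point'' inequality that encodes the optimality of the proximal step, combined with the quadratic bounds coming from the $L$‑Lipschitz continuity of $\nabla f$. Set $a^k:=x^k-\gamma\nabla f(x^k)$. Since $x^{k+1}\in\hbox{Prox}_g^\gamma(a^k)$, by definition $x^{k+1}$ is a global minimizer of $x\mapsto g(x)+\tfrac{1}{2\gamma}\|x-a^k\|^2$, so for every $x$
\begin{equation*}
g(x^{k+1})+\tfrac{1}{2\gamma}\|x^{k+1}-a^k\|^2\le g(x)+\tfrac{1}{2\gamma}\|x-a^k\|^2 .
\end{equation*}
Expanding the two squares, cancelling the common term $\tfrac{\gamma}{2}\|\nabla f(x^k)\|^2$, and rearranging, I obtain the three‑point inequality
\begin{equation*}
g(x^{k+1})+\langle\nabla f(x^k),x^{k+1}-x\rangle\le g(x)+\tfrac{1}{2\gamma}\big(\|x-x^k\|^2-\|x^{k+1}-x^k\|^2\big),\qquad\forall x .
\end{equation*}

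For statement (1), I would set $x=x^k$ in the three‑point inequality, giving $g(x^{k+1})+\langle\nabla f(x^k),x^{k+1}-x^k\rangle\le g(x^k)-\tfrac{1}{2\gamma}\|x^{k+1}-x^k\|^2$, and add to it the descent lemma $f(x^{k+1})\le f(x^k)+\langle\nabla f(x^k),x^{k+1}-x^k\rangle+\tfrac{L}{2}\|x^{k+1}-x^k\|^2$. The gradient inner products cancel, leaving $F(x^{k+1})-F(x^k)\le-\big(\tfrac{1}{2\gamma}-\tfrac{L}{2}\big)\|x^{k+1}-x^k\|^2$; since $\gamma<\tfrac1L$ the constant $\kappa_1:=\tfrac{1}{2\gamma}-\tfrac{L}{2}$ is positive, which is exactly (\ref{Sufficient-descent-inequality-0}). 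Note that convexity of $f$ is not used anywhere.

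For statement (2), the inequality is trivial when $x\notin{\rm dom}F$, so assume $x\in{\rm dom}F$. I would add to the three‑point inequality two consequences of the $L$‑Lipschitz continuity of $\nabla f$: the descent lemma $f(x^{k+1})\le f(x^k)+\langle\nabla f(x^k),x^{k+1}-x^k\rangle+\tfrac{L}{2}\|x^{k+1}-x^k\|^2$ and its ``reverse'' companion $f(x^k)\le f(x)+\langle\nabla f(x^k),x^k-x\rangle+\tfrac{L}{2}\|x^k-x\|^2$, the latter following from $|f(y)-f(z)-\langle\nabla f(y),y-z\rangle|\le\tfrac{L}{2}\|y-z\|^2$. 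Summing the three relations, the term $f(x^k)$ cancels and the three gradient inner products telescope to $\langle\nabla f(x^k),(x-x^{k+1})+(x^{k+1}-x^k)+(x^k-x)\rangle=0$, yielding
\begin{equation*}
F(x^{k+1})-F(x)\le\big(\tfrac{1}{2\gamma}+\tfrac{L}{2}\big)\|x-x^k\|^2+\big(\tfrac{L}{2}-\tfrac{1}{2\gamma}\big)\|x^{k+1}-x^k\|^2 .
\end{equation*}
Dropping the last term (nonpositive since $\gamma<\tfrac1L$) and using $\|x-x^k\|^2\le 2\|x-x^{k+1}\|^2+2\|x^{k+1}-x^k\|^2$ gives (\ref{cost-to-go-estimate-0}) with $\kappa_2:=\tfrac1\gamma+L$.

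The computations are elementary; the only point requiring care — and the reason for the standing domain hypotheses in Assumption \ref{assum_sc_h}(i) — is the legitimacy of the quadratic bounds for $f$, i.e., that the segments $[x^k,x^{k+1}]$ and $[x^k,x]$ along which one integrates $\nabla f$ lie in a set on which $\nabla f$ is $L$‑Lipschitz. This is where the openness of ${\rm dom}f$ and the choice of $C\supseteq{\rm dom}f\cap{\rm dom}g$ enter, together with the fact that the PG iterates (and any $x\in{\rm dom}F$ for which the estimate is non‑trivial) remain in ${\rm dom}f\cap{\rm dom}g$. I do not anticipate any genuine obstacle beyond this bookkeeping, which is why the lemma can be cited as ``derived similarly as related results in the literature.''
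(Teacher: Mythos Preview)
Your proof is correct and, for part (1), identical to the paper's (same prox-optimality inequality specialized to $x=x^k$, same descent lemma, same $\kappa_1=\tfrac{1}{2\gamma}-\tfrac{L}{2}$). For part (2) both arguments start from the same three-point prox inequality, but you center \emph{both} quadratic bounds for $f$ at $x^k$, so the three gradient inner products telescope to zero and you land directly on $(\tfrac{1}{2\gamma}+\tfrac{L}{2})\|x-x^k\|^2+(\tfrac{L}{2}-\tfrac{1}{2\gamma})\|x^{k+1}-x^k\|^2$. The paper instead linearizes $f$ at $x^{k+1}$ via $f(x)\ge f(x^{k+1})+\langle\nabla f(x^{k+1}),x-x^{k+1}\rangle-\tfrac{L}{2}\|x-x^{k+1}\|^2$, which leaves a cross term $\langle\nabla f(x^k)-\nabla f(x^{k+1}),x-x^{k+1}\rangle$ that is then controlled by Cauchy--Schwarz and Young's inequality. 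Your telescoping variant is marginally tidier (no cross term, $\kappa_2=\tfrac{1}{\gamma}+L$ versus the paper's $\kappa_2=\max\{\tfrac{1}{\gamma}+\tfrac{L+1}{2},\,\tfrac{L}{2}+\tfrac{1}{2\gamma}\}$), but the two are minor variations of the same standard argument, not genuinely different routes.
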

Note that the terminologies ``sufficient descent" and ``cost-to-go estimate" were first used in \cite{LuoTseng1993,Tseng2009coordinate}.
Based on the sufficient descent and the cost-to-go estimate properties in Lemma \ref{sufficient-cost-to-go}, which are two fundamental inequalities for the convergence proof of first order methods, we prove the linear convergence of the PG method.
Before presenting the result, we first discuss the assumptions needed. The first one is a local version of the  proper separation of isocost surfaces condition (\ref{isocost}).
\begin{assumption}\label{PBisocostassp}
We say that the proper separation of isocost surfaces of $F$ holds on $\bar{x} \in {{\cal X}}^\pi$ if
\begin{equation}\label{PBisocost}
 \exists \epsilon > 0 \mbox{ such that } x \in {{\cal X}}^\pi\cap {\mathbb{B}} ( \bar{x},\epsilon )\quad \Longrightarrow \quad F(x) = F(\bar{x}).
\end{equation}
\end{assumption}
It is obvious that
condition (\ref{PBisocost}) is equivalent to
\begin{equation*}\label{PBisocostNew}
 \exists \epsilon > 0 , \delta>0 \mbox{ such that } x,y \in {{\cal X}}^\pi\cap {\mathbb{B}} ( \bar{x},\epsilon ) \mbox{ and } F(x)\not =F(y) \quad \Longrightarrow \quad \|x-y\|>\delta.
\end{equation*}
Hence condition (\ref{PBisocost}) is weaker than (\ref{isocost}).

This assumption holds whenever the objective function takes on only a finite number of values on ${\cal{X}}^\pi$ locally around $\bar{x}$, or whenever the connected components of ${\cal{X}}^\pi$ around $\bar{x}$ are properly separated from each other \cite{LuoTseng1993}.
Thus this assumption holds automatically when $\bar{x}$ is an isolated stationary point.
It also holds if the objective function $F$ is convex, or $f$ is quadratic and $g$ is polyhedral \cite{Luo2006Error}.

\begin{definition}[PG-iteration-based error bound]
	Let the sequence $\left\{ x^k \right\}$ be generated by the PG method and $\bar{x}$ is an accumulation point of $\left\{ x^k \right\}$. We say that the PG-iteration-based error bound holds at $\bar x$ if there exist $\kappa,\epsilon > 0$ such that
\begin{equation}\label{error-bound-condition-1}
 {\rm{dist}}\left( x^{k+1}, {\cal{X}}^\pi \right)\le \kappa \left\|x^{k+1} - x^k \right\|, \quad \text{for all } k \text{ such that}~ x^{k+1} \in {\mathbb{B}} ( \bar{x},\epsilon ).
\end{equation}
\end{definition}

Theorem \ref{Thm3.2} shows the linear convergence of the PG method under the PG-iteration-based error bound and the proper separation of stationary value (\ref{PBisocost}).
Most of the proof techniques and methodology are mild modifications  of the analysis based on the KL inequality (\cite{AttouchBolte2013Convergence}). However, a critical phenomenon which has been completely neglected in the literature leans on the fact that the PG method actually converges toward the proximal stationary set ${{\cal X}}^\pi$.
In spite of this interesting observation, we still leave the proof of Theorem \ref{Thm3.2} in Appendix, as our main concern is when the PG-iteration-based error bound can be met, which will be addressed in the forthcoming section.

\begin{theorem} \label{Thm3.2} \rm
Assume that the step-size $\gamma$ in the PG method \eqref{PG_method_basic} satisfies $\gamma <\frac{1}{L}$. Let the sequence $\{x^k\}_{k=0}^\infty$ be generated by the PG method and $\bar{x}$ be an accumulation point of $\left\{ x^k \right\}_{k=0}^\infty$. Then $\bar x \in {{\cal X}}^\pi.$  Suppose that the PG-iteration-based error bound  (\ref{error-bound-condition-1}) along with
 the proper separation of stationary value (\ref{PBisocost}) hold at $\bar x$.
 Then the sequence $\left\{ x^k \right\}_{k=0}^\infty$ converges to $\bar x$ linearly
with respect to the sequence of objective function values, i.e.,  there exist $k_0 > 0 $ and $0<\sigma < 1$, such that for all $k \ge k_0$, we have
\begin{eqnarray}
F(x^{k+1}) - F(\bar{x}) &\le& \sigma \left(F(x^{k}) - F(\bar{x})\right)
.\label{linear-convergence-objective}
\end{eqnarray}
Moreover we have for all $k \ge k_0$,
\begin{eqnarray}
\left\| x^k - \bar{x} \right\| &\le& \rho_0\, \rho^k,\label{linear-convergence-sequence}
\end{eqnarray}
for some $\rho_0>0, 0<\rho<1$.
\end{theorem}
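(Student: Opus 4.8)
The plan is to prove the two conclusions in turn: first that every accumulation point of $\{x^k\}$ lies in ${\cal X}^\pi$, and then, using the PG-iteration-based error bound (\ref{error-bound-condition-1}) together with the proper separation (\ref{PBisocost}), the two linear-rate estimates, via a Luo--Tseng-type argument adapted to ${\cal X}^\pi$. For the first conclusion the point is that the plain inclusion (\ref{eqn4.2NEW}) would, upon passing to the limit, yield only \emph{limiting}-subdifferential stationarity; to land in ${\cal X}^\pi$ one must retain a \emph{uniform} proximal modulus along the iterates, and that comes directly from the defining minimality of the prox step. With $a^k:=x^k-\gamma\nabla f(x^k)$, optimality of $x^{k+1}$ in (\ref{Basic_Proximal_Problem}) reads $g(x)+\frac{1}{2\gamma}\|x-a^k\|^2\ge g(x^{k+1})+\frac{1}{2\gamma}\|x^{k+1}-a^k\|^2$ for all $x$, and expanding both squares about $x^{k+1}$ rewrites this as
\[
g(x)\ \ge\ g(x^{k+1})+\Big\langle \tfrac{p_{k+1}}{\gamma}-\nabla f(x^k),\ x-x^{k+1}\Big\rangle-\tfrac{1}{2\gamma}\|x-x^{k+1}\|^2\qquad\forall x,
\]
a proximal-subgradient inequality for $g$ at $x^{k+1}$ whose modulus $\tfrac{1}{2\gamma}$ does not depend on $k$. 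Since Lemma \ref{sufficient-cost-to-go}(1) together with $F\ge F_{\rm min}$ makes $\sum_k\|x^{k+1}-x^k\|^2$ finite, $p_k\to0$; so along any subsequence with $x^{k_j}\to\bar x$ one also has $x^{k_j+1}\to\bar x$, and passing to the limit in the displayed inequality — using continuity of $\nabla f$ near $\bar x$ and of $g$ on ${\rm dom}g$, with $\bar x\in{\rm dom}f\cap{\rm dom}g$ since $F(x^k)$ stays bounded — gives $g(x)\ge g(\bar x)+\langle-\nabla f(\bar x),\,x-\bar x\rangle-\tfrac{1}{2\gamma}\|x-\bar x\|^2$ for all $x$, i.e. $-\nabla f(\bar x)\in\partial^\pi g(\bar x)$, so $\bar x\in{\cal X}^\pi$. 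The same limiting argument shows $F(x^k)\downarrow F(\bar x)$, hence $\Delta_k:=F(x^k)-F(\bar x)\ge0$ for every $k$.

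For the rate, consider first the indices $k$ for which $x^{k+1}\in\mathbb{B}(\bar x,\epsilon)$. Pick $\hat x^{k+1}\in{\cal X}^\pi$ with $\|x^{k+1}-\hat x^{k+1}\|\le{\rm dist}(x^{k+1},{\cal X}^\pi)+\|x^{k+1}-x^k\|$ — an approximate projection, so that closedness of ${\cal X}^\pi$ is not needed; by (\ref{error-bound-condition-1}) then $\|x^{k+1}-\hat x^{k+1}\|\le(\kappa+1)\|x^{k+1}-x^k\|\to0$, hence $\hat x^{k+1}\in\mathbb{B}(\bar x,\epsilon)$ for large $k$, so (\ref{PBisocost}) gives $F(\hat x^{k+1})=F(\bar x)$. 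Feeding $x=\hat x^{k+1}$ into the cost-to-go estimate (\ref{cost-to-go-estimate-0}) and using (\ref{error-bound-condition-1}) once more,
\[
\Delta_{k+1}=F(x^{k+1})-F(\hat x^{k+1})\le\kappa_2\big(\|\hat x^{k+1}-x^{k+1}\|^2+\|x^{k+1}-x^k\|^2\big)\le\kappa_2\big((\kappa+1)^2+1\big)\|x^{k+1}-x^k\|^2,
\]
while sufficient descent (\ref{Sufficient-descent-inequality-0}) gives $\|x^{k+1}-x^k\|^2\le\kappa_1^{-1}(\Delta_k-\Delta_{k+1})$; combining and solving for $\Delta_{k+1}$ yields $\Delta_{k+1}\le\sigma\Delta_k$ with $\sigma:=C/(1+C)<1$ and $C:=\kappa_2((\kappa+1)^2+1)/\kappa_1$, which is (\ref{linear-convergence-objective}). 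Once this recursion is in force, $\|x^{k+1}-x^k\|\le\kappa_1^{-1/2}\Delta_k^{1/2}$ decays geometrically, so $\{x^k\}$ has summable increments, is therefore Cauchy and converges, and its limit must be the accumulation point $\bar x$; summing the geometric tail $\sum_{j\ge k}\|x^{j+1}-x^j\|$ gives $\|x^k-\bar x\|\le\rho_0\rho^k$ with $\rho=\sqrt\sigma$, which is (\ref{linear-convergence-sequence}).

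The genuinely delicate point — and the step I expect to require the most care — is the circularity just used: the estimates of the previous paragraph are valid only while $x^{k+1}\in\mathbb{B}(\bar x,\epsilon)$, which is exactly what one wants to conclude from them. I would resolve this by the standard trapping induction. Using that $\bar x$ is an accumulation point and $F(x^k)\downarrow F(\bar x)$, fix an index $k_0$ along the convergent subsequence with $\|x^{k_0}-\bar x\|<\epsilon/3$ and with $\Delta_{k_0}$ small enough that $\kappa_1^{-1/2}\Delta_{k_0}^{1/2}/(1-\sqrt\sigma)<\epsilon/3$. Assuming inductively $x^{k_0},\dots,x^{k}\in\mathbb{B}(\bar x,\epsilon)$, the recursion $\Delta_{j+1}\le\sigma\Delta_j$ (hence $\Delta_j\le\sigma^{j-k_0}\Delta_{k_0}$) and the bound $\|x^{j+1}-x^j\|\le\kappa_1^{-1/2}\Delta_j^{1/2}$ hold on this range, so
\[
\|x^{k+1}-x^{k_0}\|\le\sum_{j=k_0}^{k}\|x^{j+1}-x^j\|\le\kappa_1^{-1/2}\Delta_{k_0}^{1/2}\sum_{j\ge0}(\sqrt\sigma)^{j}=\frac{\kappa_1^{-1/2}\Delta_{k_0}^{1/2}}{1-\sqrt\sigma}<\frac{\epsilon}{3},
\]
whence $x^{k+1}\in\mathbb{B}(\bar x,2\epsilon/3)\subset\mathbb{B}(\bar x,\epsilon)$ and the induction closes; thereafter everything in the previous paragraph applies for all $k\ge k_0$. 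The remaining items — non-closedness of ${\cal X}^\pi$ (handled by approximate projections), $\Delta_k\ge0$ so that dividing by $1+C$ and extracting square roots is legitimate, and the harmless reduction to the case $x^{k+1}\neq x^k$ (else the iteration is eventually stationary at $\bar x$ and the claim is trivial) — are routine bookkeeping.
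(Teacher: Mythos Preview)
Your proof is correct and follows the same overall Luo--Tseng architecture as the paper's (sufficient descent plus cost-to-go plus the error bound, combined via a recursion on $\Delta_k$ and a trapping induction to keep the iterates in $\mathbb{B}(\bar x,\epsilon)$). Two execution details differ, and in both places your version is a bit more direct. For $\bar x\in{\cal X}^\pi$, the paper passes to the limit in $x^{k_i}\in\hbox{Prox}_g^\gamma(x^{k_i-1}-\gamma\nabla f(x^{k_i-1}))$ using outer semicontinuity of the prox map and then reads off the proximal optimality condition at $\bar x$; you instead write the proximal-subgradient inequality at each $x^{k+1}$ with the \emph{uniform} modulus $\tfrac{1}{2\gamma}$ and pass to the limit directly, which makes transparent why one lands in $\partial^\pi g$ rather than merely $\partial g$. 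For the trapping step, the paper derives the telescoping inequality $2\|x^j-x^{j+1}\|\le c\big(\sqrt{\Delta_j}-\sqrt{\Delta_{j+1}}\big)+\tfrac12(\|x^{j-1}-x^j\|+\|x^j-x^{j+1}\|)$ and inducts on a summed form of it; you instead use the bound $\|x^{j+1}-x^j\|\le\kappa_1^{-1/2}\Delta_j^{1/2}$ (valid unconditionally from sufficient descent) together with the already-established contraction $\Delta_j\le\sigma^{j-k_0}\Delta_{k_0}$ on the inductive range, and close the induction by summing the resulting geometric series --- a cleaner bookkeeping that yields the same conclusion. One small point you glossed over (the paper handles it explicitly by shrinking $\epsilon$): to invoke (\ref{PBisocost}) on the approximate projection $\hat x^{k+1}$ you need $\hat x^{k+1}$ in the proper-separation ball, which follows once $\epsilon$ is taken small relative to that radius and $k_0$ large enough that $(\kappa+1)\|x^{k+1}-x^k\|$ is small; this is harmless but should be said.
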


\section{Discussions on various error bound conditions}
\setcounter{equation}{0}

In Section \ref{Sec3}, we have shown that linear  convergence of the PG method relies heavily on the PG-iteration-based error bound condition (\ref{error-bound-condition-1}).
In this section, we shall find an appropriate condition sufficient for the PG-iteration-based error bound condition \eqref{error-bound-condition-1} which is independent of the iteration sequence. For this purpose, we propose a new perturbation analysis technique which determines an appropriate error bound type condition for convergence analysis. In fact all results in this section remind true if one replace the proximal stationary point set ${\cal X}^\pi$ by  the  limiting stationary point set ${\cal X}^L$.

\subsection{A perturbation analysis technique}

We recall  by  the PG iteration scheme that given $x^k$, $x^{k+1}$ is an optimal solution to the optimization problem
$$\min_{x\in \mathbb{R}^n} \   \langle \nabla f(x^k),x-x^k\rangle +\frac{1}{2\gamma} \|x-x^k\|^2 +g(x).$$ By the calculus rule in Proposition \ref{calculus2},  since $f$ is smooth with $\nabla f$ Lipschitz near the point $x^{k+1}$,
\begin{equation}\label{opt_con_k}
0 \in  \nabla f (x^k)  + \frac{1}{\gamma} \left(x^{k+1} - x^{k} \right) + \partial^\pi g \left(x^{k+1}\right).
\end{equation}
Denote by $p_{k+1} := x^k - x^{k+1} $. Then the above inclusion can be rewritten as
\begin{equation}\label{Optimality_condition_new}
\frac{ p_{k+1}}{\gamma} \in \nabla f \left(x^{k+1} + p_{k+1} \right) + \partial^\pi g (x^{k+1}).
\end{equation}
It follows that  condition (\ref{error-bound-condition-1}) can be rewritten as
\begin{equation}\label{error-bound-condition-new}
{\hbox{dist}}\left( x^{k+1}, {\cal{X}}^\pi \right)\le \kappa \left\|p_{k+1}  \right\|, \quad \text{for all}~ k ~\text{s.t.}~ x^{k+1} \in {\mathbb{B}} ( \bar{x},\epsilon ).
\end{equation}
where $p_{k+1}$ satisfies (\ref{Optimality_condition_new}).
Inspired by  (\ref{Optimality_condition_new}), we define the following set-valued map induced by the PG method
\begin{equation}\label{Set_value_function_new}
{\cal{S}}_{PG} \left(p\right) := \left\{x \ \big|\  \frac{p}{\gamma} \in \nabla f \left( x + p \right) + \partial^\pi g \left(x\right)\right\},
\end{equation}

By Definition \ref{calm} and the comment after that, the set-valued map ${\cal S}_{PG}$ is {\em{calm}} around $(0, \bar x)$ if and only if there exist $\kappa>0$ and a neighborhood  $\mathbb{U}(\bar x)$ of $\bar x$ such that
\begin{equation}\label{Metric_subregularity_0new}
d(x, {\cal X}^\pi) \leq \kappa \|p\|, \quad \forall  x\in \mathbb{U}(\bar x), p\in {\cal S}_{PG}^{-1}(x).
\end{equation}
By taking $x=x^{k+1}$ and $p_{k+1} = x^k - x^{k+1}$ for sufficently large $k$ in the above condition, one can see that  the {\em{calmness}} of ${\cal S}_{PG}$ at $(0,\bar x)$ is a sufficient condition for condition (\ref{error-bound-condition-new}) or equivalently the PG-iteration-based error bound condition (\ref{error-bound-condition-1})  to hold and it is independent of the iteration sequence.

\subsection{Interplay between error bound conditions}
The question is now how to find verifiable sufficient conditions for the {\em{calmness}} of ${\cal S}_{PG}$ and what are the relationships with other related set-valued maps and the Luo-Tseng error bound.

In order to paint a complete picture, we   define the following three set-valued maps.
Firstly, by  taking $x^k=x^{k+1}+p_{k+1}, $ in (\ref{Optimality_condition_new}),  the PG also induces the following set-valued map:
$$
{\cal{S}}_{PGb} \left(p\right) := \left\{ x \ \big|\  \frac{p}{\gamma}  \in \nabla f \left( x  \right) + \partial^\pi g \left( x -p \right)\right\}.
$$
Secondly, we define the  set-valued map ${\cal{S}}_{PPA}$ induced by the PPA \footnote{The iteration scheme of PPA for problem (\ref{Basic_Problem}) can be written as $ x^{k+1} = \hbox{Prox}_{f + g}^{\gamma} \left( x^k \right)$. Straightforward calculation further implies
	$\frac{p_{k+1}}{\gamma} \in \nabla f \left(x^k - p_{k+1}\right) + \partial^\pi g \left( x^k - p_{k+1}\right)$ with $p_{k+1} = x^k - x^{k+1}$.} as
$$
{\cal{S}}_{PPA} \left(p\right) := \left\{ x \ \big|\  \frac{p}{\gamma}  \in \nabla f \left( x - p \right) + \partial^\pi g \left( x - p \right)\right\}.
$$
Note that $${\cal{S}}_{cano} (0)={\cal{S}}_{GP} (0)={\cal{S}}_{GPb} (0)={\cal{S}}_{PPA} (0)={{\cal X}}^\pi$$
and hence all these  four set-valued maps are solutions of the proximal stationary point set ${{\cal X}}^\pi$ perturbed in certain way.

On the other hand, we may define the following pointwise extension of the Luo-Tseng error bound (\ref{Luo-Tseng-error-bound}) in the general nonconvex case.
We say that the proximal error bound holds at $\bar x$ if
  there exist constants $\kappa > 0$ and $\epsilon > 0$, such that
\begin{equation}\label{Luo-Tseng-neighborhood-error-bound}
\exists\ \kappa, \epsilon > 0,\quad {\rm{dist}} \left(x, {{\cal{X}}}^\pi\right) \le \kappa {\rm{dist}} (x,  {\rm{Prox}}^{\gamma}_{g} \left( x - \gamma \nabla f(x) \right) ),\ \forall x \in \mathbb{B}\left(\bar{x},\epsilon\right).
\end{equation}
In fact, the pointwise extension of the Luo-Tseng error bound (\ref{Luo-Tseng-error-bound}), i.e., proximal error bound (\ref{Luo-Tseng-neighborhood-error-bound}) is nothing but the metric subregularity of the proximal residue $r(x)={\rm{dist}} (x,  {\rm{Prox}}^{\gamma}_{g} \left( x - \gamma \nabla f(x) \right) )$.

The connections we intent to prove can be illustrated in the following Figure \ref{flowchart}, justifying the promised (R4) in Section 1.

\begin{figure}[H]
	\begin{tikzpicture}[->,>=stealth',auto,node distance=10.5em,semithick]
	\tikzstyle{every state}=[rectangle,text=black]
	
	\node[state] (X) at (-4.5,0) {{\em{Calmness}} of ${\cal{S}}_{PPA}$ at $(0,\bar{x})$};
	\node[state] (Y) at (-1.75,-2) {{\em{Calmness}} of ${\cal{S}}_{cano}$ at $(0,\bar{x})$};
	\node[state] (T) at (1,0) {{\em{Calmness}} of ${\cal{S}}_{PG}$ at $(0,\bar{x})$};
	\node[state] (Z) at (6.5,0) {{\em{Calmness}} of ${\cal{S}}_{PGb}$ at $(0,\bar{x})$};
	\node[state] (W) at (4,2) {Proximal  Error Bound (\ref{Luo-Tseng-neighborhood-error-bound}) at $\bar{x}$};
	\node[state] (U) at (-4,2) {Luo-Tseng Error Bound (\ref{Luo-Tseng-error-bound})};
	\node[state] (V) at (4.5,-2) {Verifiable Sufficient Condition};
	\node[state] (A) at (0,4) {KL property with an exponent of $\frac{1}{2}$ at $\bar{x}$};
	
	\draw (X.-70) -- (Y.95) node[midway, above] {};
	\draw (Y.130) -- (X.-110) node[midway, above] {};
	\draw (Y.85) -- (T.-110) node[midway, above] {};
	\draw[dashed] (T.-70) -- (Y.55) node[midway, right] {$\quad\gamma\in (0,\frac{1}{L})$};
	\draw (T.5) -- (Z.175) node[midway, above] {};
	\draw (Z.185) -- (T.-5) node[midway, above] {};
	\draw (Z.70) -- (W.-70) node[midway, right] {};
	\draw[dashed] (W.-110) -- (Z.110) node[midway, left] {$\quad$ $g$ is semiconvex with modulus $\rho$ and $\gamma\in (0,\frac{1}{\rho}]\ \quad$};
	\draw[dashed] (U.0) -- (W.180) node[midway, above] {$g$ is convex};
	\draw (V.180) -- (Y.0) node[midway, above] {};
	\draw[dashed] (W.90) -- (A.-90) node[midway, right] {$\quad$ $g$ is semiconvex + Assumption \ref{PBisocostassp}+ $\gamma < \min\left\{\frac{1}{\rho},\frac{1}{L}\right\}$};
	\end{tikzpicture}\caption{Relationships between the {\em{calmness}} of $\mathcal {S}_{PPA}$, $\mathcal {S}_{cano}$, $\mathcal {S}_{PGb}$ and $\mathcal {S}_{PG}$,  the Luo-Tseng error bound condition (\ref{Luo-Tseng-error-bound}), the proximal error bound condition (\ref{Luo-Tseng-neighborhood-error-bound}) and KL property with an exponent of $\frac{1}{2}$. The dotted arrow means that extra conditions are required.}\label{flowchart}
\end{figure}
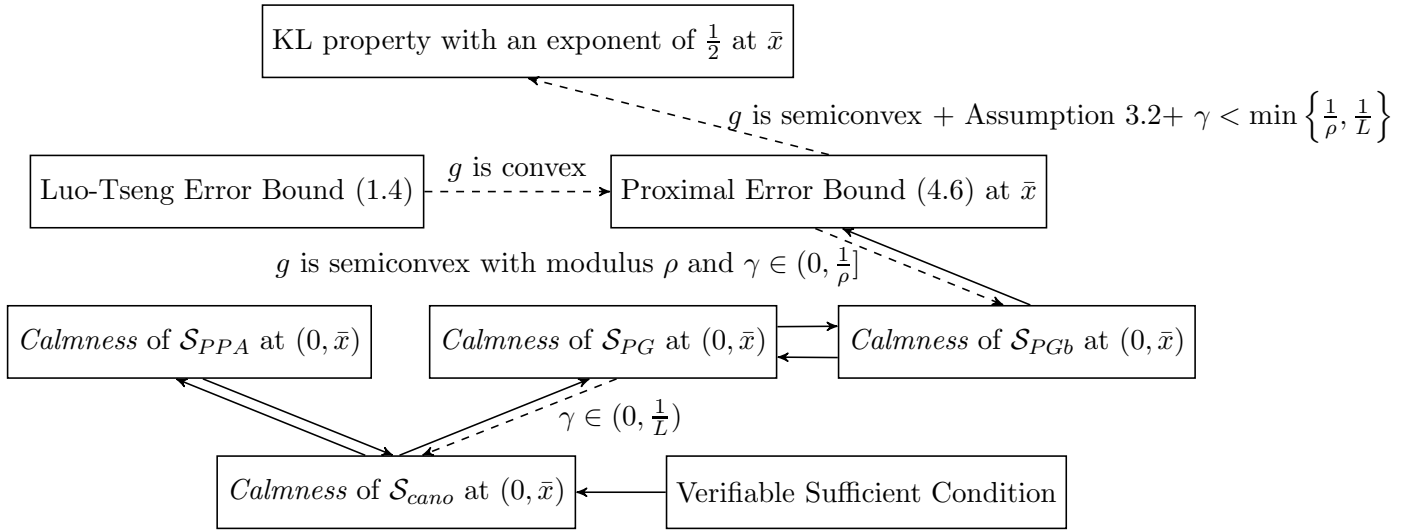

The following theorem clarifies all the details in Figure \ref{flowchart} except the implication that the verifiable sufficient condition  implies the calmness of $\mathcal {S}_{cano}$. This implication will be discussed Section \ref{Sec5}. Since the results
of the following theorem  are of independent interests, we will state the assumptions whenever needed instead of using the standing assumption \ref{assum_sc_h}.
\begin{theorem}\label{Full_theorem}
	Let $\bar{x}$ belong to ${{\cal{X}}}^\pi$.
	\begin{enumerate}
	\item[{\rm (i)}] The calmness of $\mathcal {S}_{PG}$ at $(0,\bar{x})$ is equivalent to the calmness of $\mathcal {S}_{PGb}$ at $(0,\bar{x})$.
		\item[{\rm (ii)}] Assume that $\nabla f$ is Lipschitz continuous on a neighborhood of $\bar x$ with constant $L>0$. Then
		the calmness of ${\cal{S}}_{cano}$ at $(0,\bar{x})$ implies the calmness of $\mathcal {S}_{PG}$ at $(0,\bar{x})$ and the reverse direction holds provided that  $\gamma<\frac{1}{L}$.
		\item[{\rm (iii)}] The calmness of $\mathcal {S}_{PPA}$ at $(0,\bar{x})$ is equivalent to the calmness of $\mathcal {S}_{cano}$ at $(0,\bar{x})$.
		\item[{\rm (iv)}]  The calmness of ${\cal{S}}_{PGb}$ at $(0,\bar{x})$ implies the proximal error bound condition (\ref{Luo-Tseng-neighborhood-error-bound}) at $\bar{x}$. The reversed direction also holds  when {$g$ is  semi-convex around $\bar x$}  with modulus $\rho$ and $\gamma \le \frac{1}{\rho}$.
		\item[{\rm (v)}] The  Luo-Tseng error bound condition (\ref{Luo-Tseng-error-bound}) implies the proximal error bound condition (\ref{Luo-Tseng-neighborhood-error-bound}) when $g$ is convex.
   \item[{\rm (vi)}] Assume that $\nabla f$ is Lipschitz continuous on a neighborhood of $\bar x$ with constant $L>0$.  When {$g$ is  semi-convex around $\bar x$}, if the proper separation of stationary value (\ref{PBisocost}) and the proximal error bound \eqref{Luo-Tseng-neighborhood-error-bound} at $\bar{x}$ holds, then $F$ satisfies the  KL property with an exponent of $\frac{1}{2}$ at $\bar{x}$
	\end{enumerate}
\end{theorem}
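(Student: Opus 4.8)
We prove Theorem~\ref{Full_theorem} by treating the assertions in three groups: the pure reparametrizations (i), (iii); the two analytic equivalences (ii), (iv); and the comparisons (v), (vi). Throughout we invoke the remark after Definition~\ref{calm} that the neighborhood of $\bar p$ in the calmness definition may be enlarged to all of $\mathbb{R}^d$, so that only the neighborhood of $\bar x$ need be tracked. For (i), the substitution $y=x+p$ is a bijection of $\mathcal{S}_{PG}(p)$ onto $\mathcal{S}_{PGb}(p)$ fixing $\mathcal{S}_{PG}(0)=\mathcal{S}_{PGb}(0)={\cal X}^\pi$ and moving points by exactly $\|p\|$, so a calmness estimate for one map transfers to the other with the modulus increased by $1$. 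For (iii), the substitution $y=x-p$ together with the rescaling $p=\gamma q$ identifies $x\in\mathcal{S}_{PPA}(p)$ with $y\in\mathcal{S}_{cano}(q)$, and the same bookkeeping (the factor $\gamma$ merely rescales the modulus) gives the equivalence.

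For (ii), the forward direction is immediate: if $x\in\mathcal{S}_{PG}(p)$ then $q:=\tfrac{p}{\gamma}+\nabla f(x)-\nabla f(x+p)\in\nabla f(x)+\partial^\pi g(x)$, i.e.\ $x\in\mathcal{S}_{cano}(q)$ with $\|q\|\le(\tfrac1\gamma+L)\|p\|$, so calmness of $\mathcal{S}_{cano}$ applies. The reverse direction is where $\gamma<\tfrac1L$ is essential: given $x\in\mathcal{S}_{cano}(q)$ close to $\bar x$ with $\|q\|$ small, the map $p\mapsto\gamma\big(q-\nabla f(x)+\nabla f(x+p)\big)$ is a contraction with ratio $\gamma L<1$ on a small ball, and its fixed point $p^{\ast}$ satisfies $x\in\mathcal{S}_{PG}(p^{\ast})$ with $\|p^{\ast}\|\le\frac{\gamma}{1-\gamma L}\|q\|$; calmness of $\mathcal{S}_{PG}$ then gives ${\rm dist}(x,{\cal X}^\pi)\le\frac{\kappa\gamma}{1-\gamma L}\|q\|$.

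For (iv), the forward direction uses the optimality condition of the proximal subproblem: applying the sum rule (Proposition~\ref{calculus2}) to the globally minimized $\psi_x(z):=g(z)+\tfrac1{2\gamma}\|z-x+\gamma\nabla f(x)\|^2$ at the point $x^{+}$ of ${\rm Prox}^{\gamma}_{g}(x-\gamma\nabla f(x))$ nearest $x$ gives $\tfrac1\gamma(x-x^{+})-\nabla f(x)\in\partial^\pi g(x^{+})$, i.e.\ $x\in\mathcal{S}_{PGb}(x-x^{+})$ with $\|x-x^{+}\|=r(x):={\rm dist}\big(x,{\rm Prox}^{\gamma}_{g}(x-\gamma\nabla f(x))\big)$; calmness of $\mathcal{S}_{PGb}$ then yields ${\rm dist}(x,{\cal X}^\pi)\le\kappa\, r(x)$. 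I expect the reverse direction to be the main obstacle. When $g$ is semi-convex around $\bar x$ with modulus $\rho$ and $\gamma\le\tfrac1\rho$, $\psi_x$ is a convex function plus a convex quadratic, hence convex (strongly so when $\gamma<\tfrac1\rho$) on a ball $\mathbb{B}(\bar x,\eta)$ independent of $x$; from $x\in\mathcal{S}_{PGb}(p)$ one gets $0\in\partial^\pi\psi_x(x-p)$, so $x-p$ is the unique critical point of $\psi_x$ in $\mathbb{B}(\bar x,\eta)$. The delicate step is to show that for $x$ sufficiently near $\bar x$ the \emph{global} minimizer of $\psi_x$—the actual value of ${\rm Prox}^{\gamma}_{g}(x-\gamma\nabla f(x))$—also lies in $\mathbb{B}(\bar x,\eta)$ and hence equals $x-p$, which forces $r(x)=\|p\|$ and lets the proximal error bound deliver calmness of $\mathcal{S}_{PGb}$; this localization (ruling out a competing far-away minimizer) should follow from outer semicontinuity of the proximal map and the fact that $\bar x$ is a strict local minimizer of $\psi_{\bar x}$, but must be argued with care.

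For (v), when $g$ is convex one has ${\cal X}^\pi={\cal X}$ and the residue $x\mapsto x-{\rm Prox}^{\gamma}_{g}(x-\gamma\nabla f(x))$ is continuous and vanishes at $\bar x$; choosing $\xi:=F(\bar x)+1\ (\ge\inf F)$ with the corresponding $\kappa,\epsilon$ from the Luo--Tseng bound~\eqref{Luo-Tseng-error-bound}, upper semicontinuity of $F$ at $\bar x$ (Assumption~\ref{assum_sc_h}(ii) and continuity of $f$) and continuity of the residue make both qualifying conditions of~\eqref{Luo-Tseng-error-bound} hold on a small ball, on which the Luo--Tseng estimate is exactly~\eqref{Luo-Tseng-neighborhood-error-bound}. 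Finally, for (vi) I would chain proximal error bound $\Rightarrow$ calmness of $\mathcal{S}_{PGb}\Rightarrow$ calmness of $\mathcal{S}_{PG}\Rightarrow$ calmness of $\mathcal{S}_{cano}$ via (iv), (i), (ii) (so $\gamma<\min\{\tfrac1\rho,\tfrac1L\}$), and then derive the KL$\,$-$\tfrac12$ inequality directly. For $x$ near $\bar x$ with $F(\bar x)<F(x)<\infty$, pick $v\in\partial F(x)=\nabla f(x)+\partial^\pi g(x)$ with $\|v\|={\rm dist}(0,\partial F(x))$ (using Propositions~\ref{calculus} and~\ref{Prop2.5} so that $\partial^\pi g=\partial g$ and ${\cal X}^\pi={\cal X}^L$ near $\bar x$); then $x\in\mathcal{S}_{cano}(v)$, so ${\rm dist}(x,{\cal X}^\pi)\le\kappa_0\|v\|$. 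Let $\hat x$ be a projection of $x$ onto the locally closed set ${\cal X}^\pi$; since $\|\hat x-\bar x\|\le 2\|x-\bar x\|$, the proper separation condition~\eqref{PBisocost} gives $F(\hat x)=F(\bar x)$. Bounding $f(x)-f(\hat x)$ by the descent lemma for $f$ and $g(x)-g(\hat x)$ by the semi-convex subgradient inequality for $g$ at $x$ with subgradient $v-\nabla f(x)$, the two linear terms combine into $\langle v,x-\hat x\rangle$, yielding
\[
F(x)-F(\bar x)\ \le\ \langle v,\,x-\hat x\rangle+\tfrac{L+\rho}{2}\|x-\hat x\|^2\ \le\ \Big(\kappa_0+\tfrac{L+\rho}{2}\kappa_0^2\Big)\,{\rm dist}(0,\partial F(x))^2 ,
\]
using $\|x-\hat x\|\le\kappa_0\|v\|$. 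Taking square roots gives the KL property with exponent $\tfrac12$ at $\bar x$.
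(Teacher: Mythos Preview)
Your proposal is correct and, for parts (i), (iii), (iv), (v), (vi), follows essentially the same route as the paper (shift/rescale reparametrizations, optimality condition of the prox subproblem, continuity of $F$, and the semi-convex subgradient inequality combined with the descent lemma). Two points are worth noting.

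\medskip
\noindent\textbf{Part (ii), forward direction.} Here you take a genuinely more elementary path than the paper. The paper rewrites $\mathcal{S}_{PG}$ as a constraint system $\mathcal{M}(p,x)=\mathcal{G}(p,x)+\mathrm{gph}(\partial^\pi g)\ni 0$ and invokes Gfrerer--Outrata's implicit-multifunction result \cite[Theorem~3.3]{Gfrerer2016Lipschitzian} together with Proposition~\ref{equi_ms}: one verifies the ``restricted calmness in $p$'' of $\mathcal{M}$ from Lipschitzness of $\nabla f$, and identifies metric subregularity of $\mathcal{M}_0$ with calmness of $\mathcal{S}_{cano}$. Your argument bypasses this machinery entirely: from $x\in\mathcal{S}_{PG}(p)$ you read off $x\in\mathcal{S}_{cano}(q)$ with $\|q\|\le(\tfrac1\gamma+L)\|p\|$ in one line. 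This is shorter and self-contained; the paper's route has the advantage of illustrating a general template (perturbed system $\Rightarrow$ restricted calmness $\Rightarrow$ calmness) that applies to other algorithms. For the reverse direction your Banach fixed-point construction of $p^\ast$ and the paper's inversion of $I-\gamma\nabla f$ (strongly monotone when $\gamma<1/L$) are two phrasings of the same idea; both produce a point in $\mathcal{S}_{PGb}$ (resp.\ $\mathcal{S}_{PG}$) at distance $\le\frac{\gamma}{1-\gamma L}\|q\|$.

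\medskip
\noindent\textbf{Part (iv), reverse direction.} You are right to flag the localization issue---that the \emph{global} prox point must fall in the ball where $\psi_x$ is convex so that the local critical point $x-p$ coincides with it. The paper's proof simply asserts the equivalence ``$x-p\in\mathrm{Prox}^\gamma_g(x-\gamma\nabla f(x))\Leftrightarrow \tfrac{p}{\gamma}\in\nabla f(x)+\partial^\pi g(x-p)$'' from local convexity without addressing this point, so your treatment is in fact more careful than the paper's here; your suggested justification via outer semicontinuity of the prox map at $\bar x$ (together with $\bar x$ being the unique local minimizer of $\psi_{\bar x}$) is the natural way to close it.
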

\begin{proof}
{\bf (i)}: Suppose  the {\em{calmness}} of ${\cal{S}}_{PG}$ at $(0,\bar{x})$ holds. It follows by definition that there exist a neighborhood ${{\mathbb{U}}}(\bar x)$ of $\bar{x}$ and $ \kappa >0$ such that
	\begin{equation}\label{calmSpg}
	{\hbox{dist}} \left( x, {\cal{S}}_{PG} \left(0\right)\right)  \le  \kappa  \left\|p\right\|,\quad \forall x \in {{\mathbb{U}}}(\bar x),  p\in {\cal{S}}_{PG}^{-1}(x).
	\end{equation}
Take arbitrary $x \in {\cal{S}}_{PGb} \left(p\right)$. Then by definition
	$$
	\frac{p}{\gamma} \in \nabla f(x) + \partial^\pi g (x - p),$$
which can be written as
	$$
	\frac{p}{\gamma} \in \nabla f(\tilde{x} + p) + \partial^\pi g (\tilde{x}),
	$$ with 	 $\tilde{x} := x - p$ and hence $\tilde{x} \in \mathcal {S}_{PG}(p)$.
	Let $ {{\mathbb{U}}}_0(\bar x)$ be a neighborhood of $\bar x$ and $\delta>0$ be such that
	$$  \tilde{x} = x-p \in {\mathbb{U}}(\bar x), \quad \forall x \in {{\mathbb{U}}}_0(\bar x), \left\| p \right\| \le \delta. $$ Then by  using (\ref{calmSpg}), for any $x \in {{\mathbb{U}}}_0(\bar x)$, $\left\| p \right\| \le \delta$ with $ p\in {\cal{S}}_{PGb}^{-1}(x)$, we have
	$$
	{\hbox{dist}} \left( x, {\cal{S}}_{PGb} \left(0\right)\right) \le {\hbox{dist}} \left( x - p, {\cal{S}}_{PGb}\left(0\right)\right) + \left\| p \right\| = {\hbox{dist}} \left( \tilde{x}, {\cal{S}}_{PG} \left(0\right)\right) + \left\|p\right\| \le \left( \kappa + 1 \right) \left\|p\right\|,
	$$which implies that ${\cal{S}}_{PGb} $ is {\em{calm}} at $(0,\bar{x})$.
The proof of the reverse direction is similar and hence  omitted.

	{\bf (ii)}:  We rewrite $\mathcal {S}_{PG}(p)$ as
	\begin{equation*}
	\mathcal {S}_{PG} \left(p\right) := \left\{ x \ \big|\ 0 \in {\cal{M}} \left(p, x\right) \right\},
	\end{equation*}where
	$$\mathcal M(p,x) := \mathcal G(p,x) + {gph} \left(\partial^\pi g\right),\ \mathcal G(p,x) := \begin{pmatrix}
	-x \\
	-\frac{p}{\gamma} + \nabla f \left( x + p \right)
	\end{pmatrix}.
	$$
	Following the technique presented in \cite{Gfrerer2016Lipschitzian}, we introduce two multifunctions $H_{\mathcal M} : \bbR^n \rightrightarrows \bbR^n \times \bbR^n \times \bbR^n$ and $\mathcal M_p : \bbR^n \rightrightarrows \bbR^n \times \bbR^n$ defined by
	$$H_{{\cal{M}}} \left(p\right) := \left\{ (x,y) \ \big|\ y \in {\cal{M}}\left(p,x\right) \right\}\quad \hbox{and}\quad {\cal{M}}_p \left(x\right) := \left\{y \ \big|\ y \in {\cal{M}} \left(p,x\right) \right\}.$$
	By  \cite[Theorem 3.3]{Gfrerer2016Lipschitzian}, if $\mathcal M_0(x) := \mathcal M(0,x)$ is metrically subregular at $(\bar{x}, 0)$ and  ${\cal{M}}$ has the {\em{restricted calmness property}} with respect to $p$ at $(0,\bar{x},0)$, i.e.,  if there are real numbers $\kappa > 0$ and $\epsilon > 0$ such that
	$$
	 {\rm{dist}} \left( \left( x,\bar{y}\right) , H_{\cal{M}} \left(0\right)\right) \le \kappa \, \left\| p \right\|, \quad \forall \left\| p \right\| \le \epsilon, \left\| x - \bar{x} \right\|\le \epsilon,\left( x,\bar{y}\right) \in H_{\cal{M}} \left(p\right),
	$$ then $\mathcal {S}_{PG}$ is {\em{calm}} at $(0, \bar{x})$.
	Based on this result we can show that the {\em{calmness}} of ${\cal S}_{cano}$ implies the {\em{calmness}} of $\mathcal {S}_{PG}$.
	\begin{itemize}
		\item We can show that ${\cal{M}}$ has the {\em{restricted calmness property}} with respect to $p$ at $(0,\bar{x},0)$.
		Indeed, since $\nabla f(x)$ is Lipschitz around $\bar x$ with  constant $L > 0$, there is a neighborhoods $\mathbb{U}(0)$ of $0$ as well as $\mathbb{U}(\bar x)$ of $\bar{x}$ such that $\mathcal G$ is also Lipschitz continuous with modulus $L$ on $\mathbb{U}(0) \times \mathbb{U}(\bar x) \subseteq  \bbR^n \times \bbR^n $. Given $(p,x,0)$ where $p \in \mathbb{U}(0), x\in \mathbb{U}(\bar x)$ and $(x,0) \in H_{\mathcal M}(p)$, by definition, $0 \in {\mathcal M}(p,x) = \mathcal G(p,x) + {gph}(\partial^\pi g)$. As a consequence, $\mathcal G(0,x) - \mathcal G(p,x) \in \mathcal G(0,x) + {gph}(\partial^\pi g)$ and hence $\left(x, \mathcal G(0,x) - \mathcal G(p,x)\right) \in H_{\mathcal M}(0)$. Therefore we have the following  inequality
		\begin{eqnarray*}
			 {\rm{dist}} \left((x,0),H_{\mathcal M}(0)\right) \le \left\| \left(x,0\right) - \left(x, \mathcal G(0,x) - \mathcal G(p,x) \right) \right\|\le \left\| \mathcal G(0,x) - \mathcal G(p,x) \right\| \le L \left\|p\right\|,
		\end{eqnarray*}	which means that $\mathcal M$ has the {\em{restricted calmness property}} with respect to $p$ at $(0,\bar{x},0)$;
		\item We can show that $\mathcal M_0(x) := \mathcal M(0,x)$ is metrically subregular at $(\bar{x}, 0)$ provided that ${\cal S}_{cano}$ is {\em{calm}} at $(0,\bar x)$. Indeed by Proposition \ref{equi_ms}, $\mathcal M_0(x)$ is metrically subregular at $(\bar{x},(0,0))$ if and only if $\nabla f(x) + \partial^\pi g(x)$ is metrically subregular at $(\bar{x},0)$, which is equivalent to the {\em{calmness}} of ${\cal S}_{cano}$ at $(0,\bar x)$.
	\end{itemize}

Conversely suppose that $\mathcal {S}_{PG}$  is {\em{calm}} at $(0,\bar{x})$.
Thanks to the equivalence between {\em{calmness}} of $\mathcal {S}_{PG}$ and $\mathcal {S}_{PGb}$, we have that $\mathcal {S}_{PGb}$ is {\em{calm}} at $(0,\bar{x})$ as well.
By definition,  there exist a neighborhood ${{\mathbb{U}}}(\bar x)$ of $\bar{x}$ and $\delta >0, \kappa >0$ such that
\begin{equation}\label{calmPGb}
{\hbox{dist}} \left( x, {\cal{S}}_{PGb} \left(0\right)\right)  \le  \kappa  \left\|p\right\|,\quad \forall x \in {{\mathbb{U}}}(\bar x),  p\in {\cal{S}}_{PGb}^{-1}(x), \|p\|\le \delta.
\end{equation}
Take any $x \in \mathcal {S}_{cano}(p)$. By definition, $p \in \nabla f(x) + \partial^\pi g (x)$, which  can be rewritten as,
\[\label{219}
\gamma p + x - \gamma\nabla f(x) \in x + \gamma\partial^\pi g (x).
\]
Since  $\nabla f$ is Lipschitz continuous with modulus $L$,  $I-\gamma\nabla
	f$ where $I$ is the identity matrix of size $n$,  is maximally monotone and strongly monotone with constant $1-\gamma L$ if $\gamma < \frac{1}{L}$. Therefore, $(I-\gamma\nabla
	f)^{-1}$ is well defined and Lipschitz continuous with modulus $\frac{1}{1-\gamma L}$. That is, there exists $\tilde{x}
	$ such that
	\begin{eqnarray}\label{titlex-exp}
	\gamma p + x - \gamma\nabla f({x}) =  \tilde{x} - \gamma\nabla f(\tilde{x}),
	\end{eqnarray}
	and consequently
	\begin{equation}
		\left\| \tilde{x} - x \right\| = \left\| \left( I - \gamma\nabla
		f\right)^{-1} \left( \gamma p + x - \gamma\nabla f(x) \right) - \left(I - \gamma\nabla
		f \right)^{-1} \left( x - \gamma\nabla f(x) \right)\right\|\le \frac{\gamma}{1-\gamma L}\left\|p\right\|.\label{upperb}
	\end{equation}
	Plugging (\ref{titlex-exp}) into \eqref{219}, we have $	\frac{\tilde{x} - x}{\gamma} \in \nabla f \left( \tilde{x} \right) + \partial^\pi g \left( \tilde{x} - \tilde{x} + x \right)$, and as a result $\tilde{x} \in {\cal{S}}_{PGb} \left( \tilde{x} - x \right)$. Moreover by (\ref{upperb}),  there exist $ {{\mathbb{U}}}_0(\bar x)$, a neighborhood of $\bar x$ such that
	$$  \tilde{x} \in {\mathbb{U}}(\bar x), \quad \forall x \in {{\mathbb{U}}}_0(\bar x),\left\| p \right\| \le \delta. $$
	To summerize, by (\ref{calmPGb}),  for any $x, p$  with $x \in {\mathbb{U}}_0(\bar x)$, $\|p\| \le \delta$, $p\in {\cal S}_{cano}^{-1}(x)$, we have the estimate
	\begin{eqnarray}
	\hbox{dist} \left( x,\mathcal{S}_{cano}(0)\right) &\le& \hbox{dist} \left(\tilde{x},\mathcal{S}_{cano}(0) \right) + \left\|\tilde{x}-x\right\| = \hbox{dist} \left( \tilde{x},\mathcal{S}_{PGb}(0) \right) + \left\|\tilde{x}-x\right\| \nonumber\\
	& \le& \left(\kappa + 1 \right) \left\| \tilde{x} - x \right\| \le \frac{\left(\kappa + 1 \right)\gamma}{1-\gamma L} \left\|p\right\|.\nonumber
	\end{eqnarray}
Hence the {\em{calmness}} of $\mathcal{S}_{cano}$ at $\left(0,\bar{x}\right)$ follows by definition.
	
{\bf (iii)}:
 We assume that ${\cal{S}}_{cano}$ is {\em{calm}} at $(0,\bar{x})$. Then  there is a neighborhood ${\mathbb{U}}(\bar x)$ of $\bar{x}$ and  $\delta>0, \kappa > 0$ such that
 \begin{equation}\label{calmcano}
	{\hbox{dist}} \left( x, {\cal{S}}_{cano} \left(0\right)\right)  \le  \kappa  \left\|p\right\|,\quad \forall x \in {{\mathbb{U}}}(\bar x),  p\in {\cal{S}}_{cano}^{-1}(x), \|p\|\le \delta.
	\end{equation}
	Let  $x \in {\cal{S}}_{PPA} \left( p \right)$. Then by definition
	$$
	\frac{p}{\gamma} \in \nabla f \left( x - p \right) + \partial^\pi g \left( x - p \right).
	$$Let $\tilde{x} = x - p$, then
	$$
	\frac{p}{\gamma} \in \nabla f (\tilde{x}) + \partial^\pi g (\tilde{x}),
	$$ which implies $\tilde{x} \in {\cal{S}}_{cano} \left(\frac{p}{\gamma}\right)$. Let $ {{\mathbb{U}}}_0(\bar x)$ be a neighborhood of $\bar x$ and $\delta>0$ be such that
	$$  \tilde{x} = x-p \in {\mathbb{U}}(\bar x), \quad \forall x \in {{\mathbb{U}}}_0(\bar x), \left\| p \right\| \le \delta. $$
	Then by (\ref{calmcano}), we have for any $x, p $ with $x\in  {{\mathbb{U}}}_0(\bar x), \|p\|<\delta, p \in {\cal S}_{PPA}^{-1}(x)$, $$
	{\hbox{dist}} \left( x, {\cal{S}}_{PPA} \left(0\right)\right) \le {\hbox{dist}} \left( x - p, {\cal{S}}_{PPA} \left(0\right)\right) + \left\| p \right\| = {\hbox{dist}} \left( \tilde{x}, {\cal{S}}_{cano} \left(0\right)\right) + \left\|p\right\| \le (\kappa/\gamma+1) \|p\|,
	$$
	which means that ${\cal S}_{PPA} $ is {\em{calm}} at $(0,\bar x)$.

	Conversely, we now assume that ${\cal{S}}_{PPA}$ is {\em{calm}} at $(0,\bar{x})$. Then  there is a neighborhood ${\mathbb{U}}(\bar x)$ of $\bar{x}$ and  $\delta>0, \kappa > 0$ such that
 \begin{equation}\label{calmPPA}
	{\hbox{dist}} \left( x, {\cal{S}}_{PPA} \left(0\right)\right)  \le  \kappa  \left\|p\right\|,\quad \forall x \in {{\mathbb{U}}}(\bar x),  p\in {\cal{S}}_{PPA}^{-1}(x), \|p\|\le \delta.
	\end{equation}
Let $x \in {\cal{S}}_{cano} \left(p\right)$. Then  $p \in \nabla f(x) + \partial^\pi g (x)$. Denote by $\tilde{x} = x + p$. Then $ p  \in \nabla f(\tilde{x}-p) + \partial^\pi g (\tilde{x}-p)$. Hence $\tilde{x} \in {\cal{S}}_{PPA} \left(\gamma p\right)$.
Let $ {{\mathbb{U}}}_0(\bar x)$ be a neighborhood of $\bar x$ and $\delta>0$ be such that
	$$  \tilde{x} = x+p \in {\mathbb{U}}(\bar x), \quad \forall x \in {{\mathbb{U}}}_0(\bar x), \left\| p \right\| \le \delta. $$
Then by (\ref{calmPPA}), we have for any $x, p $ with $x\in  {{\mathbb{U}}}_0(\bar x), \|p\|<\delta, p \in {\cal S}_{cano}^{-1}(x)$,
	$$
	{\hbox{dist}} (x,\mathcal {S}_{cano}(0)) \le {\hbox{dist}} \left( x + p, {\cal{S}}_{cano}(0) \right) + \left\|p\right\| = {\hbox{dist}} \left( \tilde{x}, {\cal{S}}_{PPA} (0) \right) + \left\|p\right\| \le \left( \gamma \kappa + 1 \right) \left\|p\right\|.
	$$
Hence ${\cal S}_{cano}$ is {\em{calm}} at $(0,\bar x)$.

{\bf (iv)} Suppose that ${\cal{S}}_{PGb}$ is {\em{calm}}  at $(0,\bar{x})$. Then  there is a neighborhood ${\mathbb{U}}(\bar x)$ of $\bar{x}$ and  $ \kappa > 0$ such that
\begin{equation}\label{4.18}
{\hbox{dist}} \left(x,{\cal{S}}_{PGb}(0)\right) \le \kappa\, \|p\| \quad \forall  x \in {\mathbb{U}}(\bar x) \cap {\cal{S}}_{PGb}(p).
\end{equation}
Let $x \in {\mathbb{U}}(\bar x)$ and any $x^+ \in  \hbox{Prox}^{\gamma}_{g} \left( x - \gamma \nabla f(x) \right) $.
Then by the definition of the proximal operator
and the optimality condition,
	\begin{equation}\label{proximal_subproblem_optimality}
	0 \in \gamma \partial^\pi g (x^+) + x^+ - \left(x - \gamma \nabla f(x) \right),
	\end{equation}
	  Or equivalently
	$$
	\frac{x -  x^+}{\gamma} \in \nabla f(x) + \partial^\pi g \left(x - \left( x -  x^+ \right)\right).
	$$ It follows that
	\begin{equation}\label{S-LT_perturb}
	x \in {\cal{S}}_{PGb} \left( x -  x^+ \right) .
	\end{equation}
Putting $p = x -  x^+$ in  (\ref{4.18}) and noticing that (\ref{S-LT_perturb}) holds and $x^+$ is an arbitrary element in $ \hbox{Prox}^{\gamma}_{g} \left( x - \gamma \nabla f(x) \right) $, we have
$$
	 \text{dist} \left(x,{\cal{X}}^\pi\right) \le \kappa\, {\rm{dist}} (x,  {\rm{Prox}}^{\gamma}_{g} \left( x - \gamma \nabla f(x) \right) ),\quad \forall x \in {\mathbb{U}}(\bar x).
$$ That is, the proximal error bound holds at $\bar x$.
	
	Next let us consider the reverse direction. Suppose there exists $ {\mathbb{U}}(\bar x)$ such that
\begin{equation} \label{proxe}\text{dist} \left(x,{\cal{X}}^\pi\right) \le \kappa\, {\rm{dist}} (x,  {\rm{Prox}}^{\gamma}_{g} \left( x - \gamma \nabla f(x) \right) ),\quad \forall x \in {\mathbb{U}}(\bar x).
\end{equation}  Without loss of generality, assume that  $g$ is a semi-convex function on $\mathbb{U}(\bar x)$ with modulus $\rho$.  Then when $\gamma \le \frac{1}{\rho}$, the function $g(x)+\frac{1}{2\gamma}\|x\|^2$ is convex on $\mathbb{U}(\bar x)$ and consequently $g(x)+\frac{1}{2\gamma}\|x-a\|^2$ is convex on $\mathbb{U}(\bar x)$ for any $a$.
	{Let $ {{\mathbb{U}}}_0(\bar x)$ be a neighborhood of $\bar x$ and $\delta>0$ be such that
	$  x-p \in {\mathbb{U}}(\bar x), \ \forall x \in {{\mathbb{U}}}_0(\bar x), \left\| p \right\| \le \delta. $
	Then by the optimality condition and the convexity of $x'\rightarrow g(x')+\frac{1}{2\gamma}\|x'-x+\gamma \nabla f(x)\|^2$, for any $x \in {{\mathbb{U}}}_0(\bar x), \left\| p \right\| \le \delta$,
	$$
	x-p \in  \hbox{Prox}^{\gamma}_{g} \left( x - \gamma \nabla f(x) \right) \quad\Longleftrightarrow \quad \frac{p}{\gamma} \in \nabla f(x) + \partial^\pi g \left(x - p\right).$$
It follows from  the proximal error bound (\ref{proxe}) that 	
$$ \text{dist} \left(x,{\cal{X}}^\pi\right) \le \kappa \|x-p\|, \quad \forall x \in {\mathbb{U}}_0(\bar x),  x\in {\cal{S}}_{PGb}(p), \|p\|\leq \delta,$$ i.e., ${\cal S}_{PGb}$ is calm at $(0,\bar x)$.}

	{\bf (v)}: The claim follows straightforwardly by the continuity of $F(x)$ in the domain.

{\bf (vi)}: We now suppose that $g$ is  semi-convex around $\bar x$ with modulus $\rho$.
It directly follows from {(i)(ii)(iv)} that if the proximal error bound \eqref{Luo-Tseng-neighborhood-error-bound} at $\bar{x}$ holds with $\gamma < \min\{1/\rho,1/L\}$, then the calmness of $\mathcal {S}_{cano}$ at $(0,\bar{x})$ holds, i.e., there exist $\kappa , \epsilon_1 >0$ such that
\begin{equation}\label{vi_eq1}
{\rm{dist}} \left(x, {\cal{X}}^\pi\right) \le \kappa {\rm{dist}}(0, \partial^\pi F(x)),\ \forall x \in {\mathbb{B}} \left(\bar{x},\epsilon_1\right).
\end{equation}
Moreover, thanks to the proper separation of stationary value (\ref{PBisocost}), there exists $\epsilon_2 > 0 $ such that
\begin{equation}\label{sr}
{\cal{X}}^\pi \cap {\mathbb{B}} \left(\bar{x},\epsilon_2\right) \subseteq \{x ~|~ F(x) = F(\bar{x})\}.
\end{equation}
Let $\epsilon:= \min\{\epsilon_1,\epsilon_2\}$. Inspired by the technique in \cite[Proposition 3.8]{drusvyatskiy2016nonsmooth}, without loss of generality, assume that $\nabla f$ is Lipschitz continuous with modulus $L$ and $g$ is  semi-convex around  with modulus $\rho$ on $ {\mathbb{B}} \left(\bar{x},\epsilon \right)$.
 Let $f_\tau(x):= f(x) + \frac{\tau}{2} \|x\|^2$. Since   $\nabla f$ is Lipschitz continuous with modulus $L$, for any $\tau \ge L$, we have
$$
\begin{aligned}
\langle \nabla f_\tau(x_1) - \nabla f_\tau(x_2),x_1 - x_2 \rangle &= \langle \nabla f(x_1) - \nabla f(x_2),x_1 - x_2 \rangle + \tau\|x_1- x_2 \|^ 2 \\
& \ge -L\|x_1- x_2 \|^ 2 +  \tau\|x_1- x_2 \|^ 2 \ge 0, \qquad \forall x_1,x_2 \in dom\,f\cap {\mathbb{B}} \left(\bar{x},\epsilon \right),
\end{aligned}
$$
implying $f_\tau$ is convex on $ {\mathbb{B}} \left(\bar{x},\epsilon \right)$ and thus $f$ is semi-convex  at $\bar x$. It follows that  $F = f+g$ is semi-convex around $\bar x$ on since $g$ is semi-convex around $\bar x$ as well. {Without loss of generality,  we assume that $F$ is a semi-convex function on ${\mathbb{B}} \left(\bar{x},\epsilon \right)$. Then for $x \in {\mathbb{B}} \left(\bar{x},\epsilon \right)$ , we have $\partial^{\pi} F(x) = \partial F(x)$ by Proposition \ref{Prop2.5}. Moreover since $F$ is a sum of a convex function and a positive multiple of the function  $\|x\|^2$, it is easy to verify that  there exists $C \ge 0$ such that for any $x_1,x_2 \in dom\,F \cap {\mathbb{B}} \left(\bar{x},\epsilon \right)$ and $\xi \in \partial^{\pi} F(x_1)$
$$
F(x_2) \ge F(x_1) + \langle \xi, x_2 - x_1\rangle - C\|x_2-x_1\|^2.
$$}
Now, given any $x \in {\mathbb{B}} \left(\bar{x},\epsilon/2 \right)$, since $\partial^{\pi} F(x) = \partial F(x)$, $\mathcal{X}^{\pi}$ is closed and and hence we can find  $x_0$,  the projection of $x$ on $\mathcal{X}^{\pi}$, i.e., $\|x_0 - x\| = {\rm{dist}}(x, \mathcal{X}^{\pi})$. Then
$$
\| x_0 - \bar{x}\| \le \|x_0 - x\| + \|x - \bar{x}\| \le 2\|x - \bar{x}\| \le \epsilon,
$$
and thus by (\ref{sr}),
$
F(x_0) = F(\bar{x}).
$
Therefore, for any $\xi \in \partial^{\pi} F(x)$, we have
$$
F(\bar{x})= F(x_0) \ge F(x) + \langle \xi, x_0 - x\rangle - C\|x_0-x\|^2 \ge F(x) - \| \xi\| \| x_0 - x\| - C\|x_0-x\|^2.
$$
By the arbitrariness of $\xi$, we have
$$
F(\bar{x}) \ge F(x) - {\rm{dist}}^2(0, \partial^{\pi} F(x)){\rm{dist}}(x, \mathcal{X}^{\pi}) - C{\rm{dist}}^2(x, \mathcal{X}^{\pi}).
$$
Combing with \eqref{vi_eq1}, we get
$$
F(x) - F(\bar{x}) \le (C+\kappa){\rm{dist}}^2(0, \partial^{\pi} F(x)),\ \ \forall x \in {\mathbb{B}} \left(\bar{x},\frac{\epsilon}{2} \right).
$$
\end{proof}

\subsection{Calmness of ${\cal S}_{cano}$ in convergence analysis and KL exponent calculus}
We next discuss applications of the calmness of ${\cal S}_{cano}$ in convergence analysis and KL exponent calculus, justifying the promised (R2) and (R3) in Section 1.
As discussed at the beginning of this section, the calmness of ${\cal S}_{PG}$ at $(0,\bar x)$  is a sufficient condition  for the PG-iteration-based error bound to hold at $\bar x$. Hence by Theorems \ref{Thm3.2} and  \ref{Full_theorem}, we obtain the following linear convergence result of the PG method for nonconvex problems.
\begin{theorem}\label{linearcon-calm}
 Assume $\gamma<\frac{1}{L}$. Let the sequence $\{x^k\}$ be generated by the PG method with $\bar x$ as an accumulation point. Then  $\bar x\in {{\cal X}}^\pi$ and the sequence $\left\{ x^k \right\}$ converges to $\bar x $ linearly if the proper separation of stationary value (\ref{PBisocostassp}) holds at $\bar x$ and the set-valued map ${\cal S}_{cano}$ is calm at $(0,\bar x)$.
\end{theorem}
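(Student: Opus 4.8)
The plan is to assemble the statement from three pieces already in hand: Theorem~\ref{Thm3.2}, Theorem~\ref{Full_theorem}(ii), and the observation made just after the definition of $\mathcal{S}_{PG}$ that the calmness of $\mathcal{S}_{PG}$ forces the PG-iteration-based error bound. First I would note that the conclusion $\bar x\in\mathcal{X}^\pi$ is not new: it is the opening assertion of Theorem~\ref{Thm3.2}, which holds for every accumulation point of a PG sequence once $\gamma<\frac1L$.

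Next I would invoke Theorem~\ref{Full_theorem}(ii). Under Standing Assumption~\ref{assum_sc_h} the gradient $\nabla f$ is Lipschitz with constant $L$ near $\bar x$, so the forward implication there (which needs only Lipschitzness of $\nabla f$, not $\gamma<\frac1L$) applies, and the calmness of $\mathcal{S}_{cano}$ at $(0,\bar x)$ yields the calmness of $\mathcal{S}_{PG}$ at $(0,\bar x)$. By Definition~\ref{calm}, together with the noted fact that the perturbation-space neighborhood may be replaced by all of $\mathbb{R}^n$, this produces constants $\kappa,\epsilon>0$ with
\[
{\rm dist}(x,\mathcal{X}^\pi)\le\kappa\|p\|,\qquad\forall\,x\in\mathbb{B}(\bar x,\epsilon),\ p\in\mathcal{S}_{PG}^{-1}(x).
\]

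I would then specialize this to the iterates. For every $k$ with $x^{k+1}\in\mathbb{B}(\bar x,\epsilon)$, set $p_{k+1}:=x^k-x^{k+1}$; the optimality relation (\ref{Optimality_condition_new}) says precisely that $x^{k+1}\in\mathcal{S}_{PG}(p_{k+1})$, i.e.\ $p_{k+1}\in\mathcal{S}_{PG}^{-1}(x^{k+1})$. Substituting $x=x^{k+1}$, $p=p_{k+1}$ into the displayed estimate gives ${\rm dist}(x^{k+1},\mathcal{X}^\pi)\le\kappa\|x^{k+1}-x^k\|$ for all such $k$, which is exactly the PG-iteration-based error bound (\ref{error-bound-condition-1}) at $\bar x$. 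Since $\gamma<\frac1L$ and the proper separation of stationary value (\ref{PBisocost}) is assumed at $\bar x$, Theorem~\ref{Thm3.2} applies verbatim and delivers (\ref{linear-convergence-objective})--(\ref{linear-convergence-sequence}); in particular $\{x^k\}$ converges linearly to $\bar x$.

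There is no genuinely hard step here: the proof is a chaining of earlier results. The only point deserving a word of care is the bookkeeping of neighborhoods and the nonvacuity of the error bound: the calmness neighborhood $\mathbb{B}(\bar x,\epsilon)$ is exactly the one feeding the hypothesis of Theorem~\ref{Thm3.2}, and the index set $\{k:\ x^{k+1}\in\mathbb{B}(\bar x,\epsilon)\}$ is infinite because $\bar x$ is an accumulation point of $\{x^k\}$, so (\ref{error-bound-condition-1}) is a genuine constraint. Everything else is routine.
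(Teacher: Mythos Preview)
Your proposal is correct and follows essentially the same approach as the paper: the paper proves this theorem by a single sentence of chaining, invoking Theorem~\ref{Full_theorem}(ii) to pass from calmness of $\mathcal{S}_{cano}$ to calmness of $\mathcal{S}_{PG}$, then the discussion at the start of Section~4 to obtain the PG-iteration-based error bound, and finally Theorem~\ref{Thm3.2} for linear convergence and the inclusion $\bar x\in\mathcal{X}^\pi$. Your write-up simply makes each of these links explicit.
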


In \cite[Theorem 4.1]{LiPong2016calculus}, assuming that $g$ is convex, the authors show that Luo-Tseng error bound (\ref{Luo-Tseng-error-bound}) together with the proper separation of stationary value (\ref{isocost}) are sufficient for the KL property of $F$ with exponent $\frac{1}{2}$. Theorem \ref{Full_theorem} inspires a weaker sufficient condition for the KL property as follows.
\begin{theorem}\label{KLsufficient}
Let $\bar x\in {{\cal X}}^\pi$. Suppose that $g$ is  semi-convex around $\bar x$,  the proper separation of stationary value (\ref{PBisocost}) at $\bar{x}$ holds and the set-valued map ${\cal S}_{cano}$ is calm at $(0,\bar x)$. Then $F$ has the KL property at $\bar{x}$ with an exponent of $\frac{1}{2}$.
\end{theorem}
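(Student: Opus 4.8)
The plan is to obtain Theorem~\ref{KLsufficient} as a corollary of Theorem~\ref{Full_theorem}, by first upgrading the calmness of ${\cal S}_{cano}$ to the proximal error bound \eqref{Luo-Tseng-neighborhood-error-bound} and then feeding the latter, together with the proper separation hypothesis, into part (vi). Let $\rho>0$ be a semi-convexity modulus of $g$ around $\bar x$ and $L>0$ the local Lipschitz constant of $\nabla f$ from Standing Assumption~\ref{assum_sc_h}. First I would fix a step-size $\gamma\in(0,\min\{1/\rho,1/L\})$ and run the chain of implications in Figure~\ref{flowchart}: by the forward direction of Theorem~\ref{Full_theorem}(ii) (which only needs $\nabla f$ Lipschitz near $\bar x$, hence no extra restriction on $\gamma$), calmness of ${\cal S}_{cano}$ at $(0,\bar x)$ gives calmness of ${\cal S}_{PG}$ at $(0,\bar x)$; by Theorem~\ref{Full_theorem}(i) this is equivalent to calmness of ${\cal S}_{PGb}$ at $(0,\bar x)$; and by the forward direction of Theorem~\ref{Full_theorem}(iv), this yields the proximal error bound \eqref{Luo-Tseng-neighborhood-error-bound} at $\bar x$ for that $\gamma$. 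Equivalently—and this is the version I would actually record—one notes via the sum rule Proposition~\ref{calculus2} that ${\cal S}_{cano}^{-1}(x)=\nabla f(x)+\partial^\pi g(x)=\partial^\pi F(x)$ near $\bar x$, so calmness of ${\cal S}_{cano}$ at $(0,\bar x)$ is precisely the metric subregularity of $\partial^\pi F$ at $(\bar x,0)$: there exist $\kappa,\epsilon_1>0$ with ${\rm dist}(x,{\cal X}^\pi)\le\kappa\,{\rm dist}(0,\partial^\pi F(x))$ for all $x\in\mathbb{B}(\bar x,\epsilon_1)$.

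The remaining argument is the one already carried out in the proof of Theorem~\ref{Full_theorem}(vi), which I would invoke directly. Since $g$ is semi-convex around $\bar x$ and $\nabla f$ is $L$-Lipschitz near $\bar x$, the shifted function $f_\tau=f+\tfrac{\tau}{2}\|\cdot\|^2$ is locally convex for $\tau\ge L$, so $f$, and hence $F=f+g$, is semi-convex around $\bar x$; consequently on a small ball $\partial^\pi F=\partial F$ by Proposition~\ref{Prop2.5}, and, $F$ being a sum of a convex function and a positive multiple of $\|\cdot\|^2$, there is $C\ge 0$ such that $F(x_2)\ge F(x_1)+\langle\xi,x_2-x_1\rangle-C\|x_2-x_1\|^2$ for all $x_1,x_2$ near $\bar x$ in ${\rm dom}\,F$ and all $\xi\in\partial^\pi F(x_1)$. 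Using the proper separation \eqref{PBisocost}, I would shrink the radius to an $\epsilon$ on which both the metric-subregularity estimate holds and ${\cal X}^\pi\cap\mathbb{B}(\bar x,\epsilon)\subseteq\{x\mid F(x)=F(\bar x)\}$. Then for any $x\in\mathbb{B}(\bar x,\epsilon/2)$, pick a projection $x_0$ of $x$ onto the closed set ${\cal X}^\pi$; since $\|x_0-\bar x\|\le 2\|x-\bar x\|\le\epsilon$ we get $F(x_0)=F(\bar x)$, and plugging $x_1=x$, $x_2=x_0$ and an arbitrary $\xi\in\partial^\pi F(x)$ into the quadratic subgradient inequality, then minimizing over $\xi$, gives $F(\bar x)\ge F(x)-{\rm dist}(0,\partial^\pi F(x))\,{\rm dist}(x,{\cal X}^\pi)-C\,{\rm dist}^2(x,{\cal X}^\pi)$. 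Combining with ${\rm dist}(x,{\cal X}^\pi)\le\kappa\,{\rm dist}(0,\partial^\pi F(x))$ yields $F(x)-F(\bar x)\le(C+\kappa)\,{\rm dist}^2(0,\partial^\pi F(x))$ on a neighbourhood of $\bar x$, and since $\partial^\pi F=\partial F$ there, rearranging is exactly the KL inequality with exponent $\tfrac12$.

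I do not anticipate a genuine obstacle, since the statement is a clean corollary: its proof is a re-packaging of the implication chain of Theorem~\ref{Full_theorem} together with the computation in the proof of part (vi). The only points that need care are bookkeeping ones—identifying ${\cal S}_{cano}^{-1}$ with $\partial^\pi F$ so that the calmness hypothesis supplies the metric-subregularity estimate directly (which keeps us clear of any circular reliance on the step-size-restricted \emph{reverse} implications in Theorem~\ref{Full_theorem}(ii) and (iv)), verifying that a step-size $\gamma<\min\{1/\rho,1/L\}$ exists and is compatible with the hypotheses needed to apply part (vi), and choosing the radii $\epsilon_1$ (from calmness), $\epsilon_2$ (from proper separation), and the final $\epsilon=\min\{\epsilon_1,\epsilon_2\}$ small enough that the metric-subregularity bound, the semi-convex subgradient inequality, and the proper separation all hold simultaneously on the terminal neighbourhood $\mathbb{B}(\bar x,\epsilon/2)$.
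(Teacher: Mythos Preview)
Your proposal is correct and matches the paper's approach: the paper states Theorem~\ref{KLsufficient} without a separate proof, treating it as an immediate corollary of Theorem~\ref{Full_theorem}, and you have accurately reconstructed the implication chain. Your observation that one can bypass the detour through the proximal error bound---since calmness of ${\cal S}_{cano}$ at $(0,\bar x)$ \emph{is} the metric subregularity estimate \eqref{vi_eq1} used in the proof of part~(vi), via ${\cal S}_{cano}^{-1}=\nabla f+\partial^\pi g=\partial^\pi F$---is exactly right and in fact mirrors how the proof of~(vi) is organized: it first reduces the proximal error bound back to calmness of ${\cal S}_{cano}$ and then runs the semi-convex subgradient/projection argument you describe, so starting from calmness of ${\cal S}_{cano}$ one lands directly at \eqref{vi_eq1} and the rest is verbatim.
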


\section{Verification of the calmness of $\mathcal {S}_{cano}$}\label{Sec5}
\setcounter{equation}{0}

Based on recent developments in variational analysis, there are more and more  sufficient conditions for  verifying  calmness of ${\cal S}_{cano}$ available. In this section, we will summarize some of these conditions and demonstrate how we could verify the desired calmness condition, both for structured convex problems and general nonconvex problems.  Moreover for the structured convex case, we shall discuss the advantage of using the calmness of ${\cal S}_{cano}$ instead of the Luo-Tseng error bound in both the convergence analysis and KL exponent calculus.

 \subsection{The calmness of $\mathcal {S}_{cano}$ for structured convex problems}

It is known that under  the structured convex assumption  \ref{assum_struc}, the solution set
can be rewritten as
${\cal X}=\{x| 0=Ax-\bar y, 0=\bar{g}+\partial g(x)\},$ where $\bar y, \bar g$ are some constants; see e.g.,  \cite[Lemma 4.1]{FBconvex}.
It then follows that under the structured convex assumption  \ref{assum_struc}, the calmness of $\mathcal {S}_{cano}$ is equivalent to the calmness of the following perturbed solution map:
$\Gamma (p_1,p_2)=\{x| p_1=Ax-\bar y, p_2=\bar{g}+\partial g(x)\}$; see e.g.,  \cite[Proposition 4.1]{FBconvex}.

Using the calmness intersection theorem (see {\cite[Theorem 3.6]{KK-2002}}), if $\partial g$ is metrically subregular at $(\bar x, -\bar g)$ and the set $\{x|0\in \bar g+\partial g (x)\}$ is a convex polyhedral set, then according to Proposition \ref{sufficient-MSCQ-new}(1), $\Gamma (p_1,p_2)$ is calm at $(0,0,\bar x)$. By using this technique,
  \cite[Theorem 4.4]{FBconvex} has shown that under the structured convex assumption \ref{assum_struc},  if $g$ is the group LASSO regularizer, then the set-valued map ${\cal S}_{cano}$ is calm at $(0,\bar x)$. It follows from Theorem \ref{linearcon-calm} that we have the following linear convergence result.
\begin{proposition}\label{Grouplasso}
 Consider the convex optimization problem (\ref{Basic_Problem}) where $f$ satisfies Assumption \ref{assum_struc} and $g$ is the group LASSO regularizer.
 Assume $\gamma<\frac{1}{L}$.
 Let the sequence $\{x^k\}$ be generated by the PG method. Then the sequence $\left\{ x^k \right\}$ converges to an optimal solution $\bar x $ linearly.
\end{proposition}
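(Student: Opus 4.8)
The plan is to obtain the conclusion by verifying the two hypotheses of Theorem \ref{linearcon-calm} in the present setting and then invoking it. First I would record that, since both $f$ and $g$ are convex here, the three stationary sets coincide, ${\cal X}^\pi = {\cal X}^L = {\cal X}$, and ${\cal X}$ is exactly the set of global minimizers of $F$. Consequently the proper separation of stationary value (Assumption \ref{PBisocostassp}) holds at every $\bar x \in {\cal X}$ for free: every $x \in {\cal X}$ satisfies $F(x) = F_{\rm min} = F(\bar x)$, so the implication (\ref{PBisocost}) holds trivially with any $\epsilon > 0$.

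The substantive ingredient is the calmness of ${\cal S}_{cano}$ at $(0,\bar x)$ for $\bar x \in {\cal X}$, and here I would follow the reduction outlined just before the statement. By \cite[Lemma 4.1]{FBconvex}, under Assumption \ref{assum_struc} one has the representation ${\cal X} = \{x \mid Ax = \bar y,\ -\bar g \in \partial g(x)\}$ for suitable constants $\bar y$ and $\bar g$, so by \cite[Proposition 4.1]{FBconvex} the calmness of ${\cal S}_{cano}$ at $(0,\bar x)$ is equivalent to the calmness at $(0,0,\bar x)$ of the perturbed solution map $\Gamma(p_1,p_2) = \{x \mid p_1 = Ax - \bar y,\ p_2 \in \bar g + \partial g(x)\}$. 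Writing $\Gamma$ as the intersection of the affine (hence polyhedral) multifunction $p_1 \mapsto \{x \mid p_1 = Ax - \bar y\}$, which is calm everywhere by Robinson's polyhedral theorem (cf.\ Proposition \ref{sufficient-MSCQ-new}(1)), with the map $p_2 \mapsto \{x \mid p_2 \in \bar g + \partial g(x)\}$, it suffices to check that $\partial g$ is metrically subregular at $(\bar x, -\bar g)$ and that $\{x \mid 0 \in \bar g + \partial g(x)\}$ is a convex polyhedron; for the group LASSO regularizer $g(x) = \sum_{J \in {\cal J}} \omega_J \|x_J\|_2$ both facts hold, so the calmness intersection theorem \cite[Theorem 3.6]{KK-2002} yields the calmness of $\Gamma$ and hence of ${\cal S}_{cano}$. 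This is exactly the content of \cite[Theorem 4.4]{FBconvex}, which I would quote rather than reprove.

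With both hypotheses in hand, Theorem \ref{linearcon-calm} applies: any accumulation point $\bar x$ of $\{x^k\}$ lies in ${\cal X}^\pi = {\cal X}$ and the whole sequence converges to $\bar x$ linearly; by convexity $\bar x$ is a global optimal solution. One loose end I would address explicitly is the existence of an accumulation point: by Lemma \ref{sufficient-cost-to-go}(1) the values $F(x^k)$ are nonincreasing and bounded below by $F_{\rm min}$, so $\|x^{k+1}-x^k\| \to 0$; combined with the resulting error bound ${\rm dist}(x^{k+1},{\cal X}) \le \kappa\|x^{k+1}-x^k\|$ this gives ${\rm dist}(x^k,{\cal X}) \to 0$, and a level-boundedness argument then confines $\{x^k\}$ to a bounded set so that accumulation points exist. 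The step I expect to be the real obstacle, were one to insist on a self-contained proof, is the verification of the calmness of ${\cal S}_{cano}$ --- namely the metric subregularity of the group-LASSO subdifferential and the legitimacy of the calmness intersection theorem in this instance; everything else is bookkeeping around results already established in the excerpt.
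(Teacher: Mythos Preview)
Your proposal is correct and follows the same route as the paper: quote \cite[Theorem 4.4]{FBconvex} (arrived at via the reduction through \cite[Lemma 4.1, Proposition 4.1]{FBconvex} and the calmness intersection theorem \cite[Theorem 3.6]{KK-2002}) for the calmness of ${\cal S}_{cano}$, observe that convexity of $F$ makes Assumption \ref{PBisocostassp} automatic, and invoke Theorem \ref{linearcon-calm}. One small caveat: your attempt to secure the existence of an accumulation point is slightly circular---the calmness of ${\cal S}_{cano}$ is pointwise at $(0,\bar x)$ and only delivers the bound ${\rm dist}(x^{k+1},{\cal X})\le \kappa\|x^{k+1}-x^k\|$ for $x^{k+1}$ already near $\bar x$, so it cannot be used before an accumulation point is known---but the paper leaves this point implicit as well.
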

Recall that the classical result of linear convergence of the PG method under  (C3)  was shown by using  the Luo-Tseng error bound. By Theorem \ref{Full_theorem}, when $g$ is convex, the Luo-Tseng error bound is in general stronger than all calmness conditions unless the set of stationary points are compact. Hence using the point-based calmness condition instead of the Luo-Tsend error bound, the above result improves the classical result by removing the compactness assumption on the solution set.   This example demonstrates the advantage of using the point-based calmness condition over the Luo-Tseng error bound condition and justifies (R5) we have promised in Section 1.

Now consider the structured convex case where $g$ is a convex piecewise linear-quadratic (PLQ) function. In this case  $\partial g$ is a polyhedral multifunction, i.e., its graph is the union of finitely many polyhedral convex sets. Hence by Robinson's polyhedral multifunction theory, $\Gamma$ is a polyhedral multifunction and hence upper-Lipschitz continuous. Consequently, $\Gamma$ is calm at $(0,0,\bar x)$ and therefore
the set-valued map ${\cal S}_{cano}$ is calm at  $(0,\bar x)$ for any solution $\bar x $ of (\ref{Basic_Problem}). It follows from  Theorem \ref{Full_theorem} that we have the following result which has improved the result obtained in \cite[Proposition 4.1]{LiPong2016calculus}  by eliminating the compactness assumption of the solution set.
\begin{proposition}\label{PLQ-KL}
 Consider convex optimization problem (\ref{Basic_Problem}). If $f$ satisfies Assumption \ref{assum_struc} and $g$ is a convex piecewise linear-quadratic (PLQ) function, then $F$ has the KL property at $\bar{x}$ with an exponent of $\frac{1}{2}$.
\end{proposition}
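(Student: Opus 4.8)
The plan is to reduce the assertion to the calmness of $\mathcal{S}_{cano}$ at $(0,\bar x)$ and then to invoke Theorem \ref{KLsufficient}, so the argument splits into three short steps; here $\bar x$ denotes an optimal solution of \eqref{Basic_Problem}.

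First I would make the structural reduction afforded by Assumption \ref{assum_struc}. Writing $f(x)=h(Ax)+\langle q,x\rangle$, the strong convexity of $h$ on compact convex subsets of $\mathrm{dom}\,h$ forces $Ax$ to be constant, say $Ax\equiv\bar y$, over the solution set $\mathcal{X}$, and hence $\nabla f\equiv A^{\top}\nabla h(\bar y)+q=:\bar g$ is constant on $\mathcal{X}$ as well; this is \cite[Lemma 4.1]{FBconvex}, which yields $\mathcal{X}=\{x\mid 0=Ax-\bar y,\ 0\in\bar g+\partial g(x)\}$. By \cite[Proposition 4.1]{FBconvex}, under Assumption \ref{assum_struc} the calmness of $\mathcal{S}_{cano}$ at $(0,\bar x)$ is then equivalent to the calmness at $(0,0,\bar x)$ of the perturbed solution map $\Gamma(p_1,p_2):=\{x\mid p_1=Ax-\bar y,\ p_2\in\bar g+\partial g(x)\}$.

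Second I would verify that $\Gamma$ is a polyhedral multifunction. Since $g$ is convex PLQ, $\partial g$ is a polyhedral multifunction, i.e.\ $\mathrm{gph}\,\partial g$ is a finite union of polyhedral convex sets; applying the affine map $(x,v)\mapsto(x,\bar g+v)$ preserves this property, and intersecting with the affine set $\{(x,p_1)\mid p_1=Ax-\bar y\}$ and rearranging coordinates, $\mathrm{gph}\,\Gamma$ is again a finite union of polyhedral convex sets. Robinson's polyhedral multifunction theorem \cite{Robinson1980Strongly} then shows $\Gamma$ is (globally) upper-Lipschitz, hence calm at every point of its graph, in particular at $(0,0,\bar x)$; via the equivalence of the previous step, $\mathcal{S}_{cano}$ is calm at $(0,\bar x)$.

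Third I would check the remaining hypotheses of Theorem \ref{KLsufficient}. As $g$ is convex it is semi-convex around $\bar x$ (with modulus $0$), and as $F=f+g$ is convex every point of $\mathcal{X}^{\pi}=\mathcal{X}$ is a global minimizer, so $F$ is constant on $\mathcal{X}^{\pi}$ and the proper separation of stationary value \eqref{PBisocost} holds trivially at $\bar x$. Theorem \ref{KLsufficient} then delivers the KL property of $F$ at $\bar x$ with exponent $\tfrac12$, which is the claim. I do not expect a genuine obstacle here: the content is essentially packaged in \cite[Lemma 4.1 and Proposition 4.1]{FBconvex}, Robinson's theorem, and Theorem \ref{KLsufficient}; the only item requiring a little care is the (routine) bookkeeping that $\mathrm{gph}\,\Gamma$ really is a finite union of polyhedral convex sets, so that Robinson's theorem applies, together with citing the correct form of the calmness equivalence from \cite{FBconvex}.
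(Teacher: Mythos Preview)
Your proposal is correct and follows essentially the same route as the paper: reduce via \cite[Lemma 4.1 and Proposition 4.1]{FBconvex} to calmness of the map $\Gamma$, observe that $\partial g$ (and hence $\Gamma$) is a polyhedral multifunction when $g$ is convex PLQ, apply Robinson's theorem to get calmness of $\mathcal{S}_{cano}$, and then invoke the paper's Theorem~\ref{KLsufficient} (equivalently Theorem~\ref{Full_theorem}) together with the trivial verification of \eqref{PBisocost} in the convex case. The only cosmetic difference is that the paper cites Theorem~\ref{Full_theorem} directly rather than its corollary Theorem~\ref{KLsufficient}.
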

Again the improvement is due to the replacement of the Luo-Tseng error bound by the point-based calmness condition. Moreover this example justifies (R3) we have promised in Section 1.

   \subsection{The calmness of $\mathcal {S}_{cano}$ for general nonconvex problems}

  We next develop some verifiable sufficient conditions for the calmness in terms of the problem data by using  Proposition \ref{sufficient-MSCQ-new}(1-3); and then illustrate them by some concrete applications popularly appearing in statistical learning fields.
\begin{theorem}\label{polymulti-thm} Let $\bar{x}\in {\cal{X}}^\pi$ { and suppose that ${gph \left(\partial^\pi g \right)}$ is closed near the point $\bar x$}. Then  the set-valued map  $\mathcal {S}_{cano}$ is calm at $(0, \bar x)$ if one of the following conditions holds.
\begin{itemize}
\item[1.]The mapping $\nabla f$ is piecewise affine and $\partial^\pi g$ is a polyhedral multifunction.
\item[2.]NNAMCQ holds at $\bar x$:
	$$ 0\in \xi+\partial \langle -\nabla f, \eta\rangle (\bar x),\quad (\xi,\eta) \in {\cal N}_{gph \left(\partial^\pi g \right)}(\bar x, -\nabla f(\bar x)) \Longrightarrow\ (\xi,\eta)=0.$$
	When $g$ is separable, i.e., $g(x) = \sum_{i=1}^{n} g_i(x_i)$ and $f$ is twice continuoulsy differentiable,  NNAMCQ holds at $\bar x$ if
\begin{eqnarray*}
&&  0= \xi_i-\nabla f_{x_i}
(\bar x)^T \eta, ~\quad
 ~ (\xi_i,\eta_i) \in {\cal N}_{gph \left(\partial^\pi g_i \right)}(\bar x_i, -f_{x_i}(\bar x)) , \quad i=1,\dots, n \\
 &&  \Longrightarrow\ (\xi,\eta)=0.
\end{eqnarray*}
	\item[3.]  FOSCMS holds at $\bar{x}$: $f$ is twice continuoulsy differentiable and for every $w\not =0$ such that
	$$ (w,-\nabla^2 f(\bar x)w) \in {\cal T}_{gph ( \partial^\pi g)}(\bar x, -\nabla f(\bar x)) $$ one has
	$$
	0= \xi-\nabla^2 f(\bar{x})^T \eta,\quad  (\xi,\eta)\in {\cal N}_{gph \left(\partial^\pi g\right)}((\bar x, -\nabla f(\bar x)); (w, -\nabla^2 f(\bar x)w))\Longrightarrow \  (\xi,\eta) =0 .
	$$
When $g$ is separable, FOSCMS holds at $\bar{x}$ provided that  $f$ is twice continuoulsy differentiable and for every
$0 \neq w $ such that $$ ~(w_i,-  \nabla f_{x_i}(\bar{x})^Tw) \in {\cal T}_{gph ( \partial^\pi g_i)}(\bar{x}_i, -f_{x_i}(\bar x)),\quad i=1,\dots, n$$
one has
\begin{eqnarray*}
&& 0= \xi_i-\nabla f_{x_i}
(\bar x)^T \eta, \quad  \begin{array}{l}
 ~ (\xi_i,\eta_i) \in {\cal N}_{gph \left(\partial^\pi g_i \right)}((\bar x_i, -f_{x_i}(\bar x));(w_i, -\nabla f_{x_i}(\bar x)^Tw))\end{array} , \quad i=1,\dots, n \\
 &&  \Longrightarrow\ (\xi,\eta)=0.
\end{eqnarray*}

\end{itemize}
\end{theorem}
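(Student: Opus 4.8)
The plan is to convert the calmness of ${\cal S}_{cano}$ at $(0,\bar x)$ into the calmness at $(0,\bar x)$ of a \emph{canonically} perturbed set-valued map of the model form $\{x\mid p\in -P(x)+D\}$, and then read off each of the three hypotheses from Proposition \ref{sufficient-MSCQ-new}. First I would note that, by Definition \ref{calm}, calmness of ${\cal S}_{cano}$ at $(0,\bar x)$ is the same as metric subregularity of ${\cal S}_{cano}^{-1}(x)=\nabla f(x)+\partial^\pi g(x)$ at $(\bar x,0)$. Since $\nabla f$ is Lipschitz around $\bar x$ by Assumption \ref{assum_sc_h}, Proposition \ref{equi_ms} applies with $P=\nabla f$ and $Q=\partial^\pi g$, so this is in turn equivalent to metric subregularity at $(\bar x,(0,0))$ of $M_2(x)=\begin{pmatrix} x\\ -\nabla f(x)\end{pmatrix}-{gph}(\partial^\pi g)$. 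Setting $\tilde P(x):=(x,-\nabla f(x))$ and $D:={gph}(\partial^\pi g)$, which is closed near the reference point $(\bar x,-\nabla f(\bar x))$ by hypothesis, we have $\{x\mid \tilde P(x)\in D\}=\{x\mid 0\in\nabla f(x)+\partial^\pi g(x)\}={\cal X}^\pi\ni\bar x$, and — modulo the harmless sign change $M_2=-{\cal S}^{-1}$ — metric subregularity of $M_2$ at $(\bar x,(0,0))$ is exactly the calmness at $(0,\bar x)$ of the canonically perturbed map ${\cal S}(p):=\{x\mid p\in -\tilde P(x)+D\}$. Hence it suffices to check, under each of the three sets of assumptions, one of the conditions of Proposition \ref{sufficient-MSCQ-new} for this particular $\tilde P$ and $D$.

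Under condition 1, $\tilde P$ is piecewise affine because $\nabla f$ is, while $D={gph}(\partial^\pi g)$ is the union of finitely many convex polyhedra because $\partial^\pi g$ is a polyhedral multifunction; so Linear CQ (Proposition \ref{sufficient-MSCQ-new}(1)) holds. Under condition 2, for $\lambda=(\xi,\eta)$ the limiting sum rule (Proposition \ref{calculus}) gives $\partial\langle\tilde P,\lambda\rangle(\bar x)=\xi+\partial\langle -\nabla f,\eta\rangle(\bar x)$, and $\mathcal N_D(\tilde P(\bar x))={\cal N}_{{gph}(\partial^\pi g)}(\bar x,-\nabla f(\bar x))$, so NNAMCQ (Proposition \ref{sufficient-MSCQ-new}(2)) reads verbatim as the stated condition. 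When $g$ is separable, $\partial^\pi g(x)=\prod_i\partial^\pi g_i(x_i)$, so after a permutation of coordinates ${gph}(\partial^\pi g)=\prod_i{gph}(\partial^\pi g_i)$, the limiting normal cone factors as the product of the componentwise cones, and when in addition $f\in C^2$ one has $\partial\langle -\nabla f,\eta\rangle(\bar x)=-\nabla^2 f(\bar x)\eta$, whose $i$-th block is $-\nabla f_{x_i}(\bar x)^T\eta$; substituting yields the displayed componentwise NNAMCQ. Under condition 3, $f\in C^2$ makes $\tilde P$ continuously differentiable with $\nabla\tilde P(\bar x)=\begin{pmatrix} I\\ -\nabla^2 f(\bar x)\end{pmatrix}$, so the linearized cone of $\{x\mid \tilde P(x)\in D\}$ at $\bar x$ is $\{w\mid (w,-\nabla^2 f(\bar x)w)\in{\cal T}_{{gph}(\partial^\pi g)}(\bar x,-\nabla f(\bar x))\}$ and $\nabla\tilde P(\bar x)^T(\xi,\eta)=\xi-\nabla^2 f(\bar x)^T\eta$; with these identifications FOSCMS (Proposition \ref{sufficient-MSCQ-new}(3)) becomes the stated condition. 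The separable version follows from the product inclusions ${\cal T}_D\subseteq\prod_i{\cal T}_{D_i}$ and ${\cal N}_D(\cdot;\cdot)\subseteq\prod_i{\cal N}_{D_i}(\cdot;\cdot)$ of Lemma \ref{lemma3.5}: they show that the componentwise condition quantifies over a larger set of directions $w$ and a larger set of multipliers $(\xi,\eta)$ than the abstract FOSCMS, hence implies it and is therefore still sufficient. In every case the conclusion of Proposition \ref{sufficient-MSCQ-new} gives calmness of ${\cal S}$ at $(0,\bar x)$, hence metric subregularity of $M_2$ at $(\bar x,(0,0))$, hence — reversing Proposition \ref{equi_ms} and Definition \ref{calm} — calmness of ${\cal S}_{cano}$ at $(0,\bar x)$.

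The main obstacle I anticipate is the bookkeeping hidden in the separable subcases: one must justify the product formula $\partial^\pi g=\prod_i\partial^\pi g_i$ (which follows from the definition of the proximal subdifferential) together with the corresponding exact product rule for limiting normal cones, and then be careful that Lemma \ref{lemma3.5} only delivers \emph{inclusions} for the tangent and directional limiting normal cones of a Cartesian product, so the stated componentwise FOSCMS is a generally strictly stronger — but still sufficient — condition than the abstract one, precisely as flagged in Remark \ref{remark}. A secondary, routine point is keeping track of the signs and the ordering of coordinates that relate $M_2$ to the model map ${\cal S}(p)=\{x\mid p\in -\tilde P(x)+D\}$ appearing in Proposition \ref{sufficient-MSCQ-new}.
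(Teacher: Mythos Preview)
Your proposal is correct and follows essentially the same route as the paper: convert calmness of ${\cal S}_{cano}$ to metric subregularity of $\nabla f+\partial^\pi g$, invoke Proposition~\ref{equi_ms} to pass to the constraint-system form with $P(x)=(x,-\nabla f(x))$ and $D={gph}(\partial^\pi g)$, then read off each item from Proposition~\ref{sufficient-MSCQ-new} together with Proposition~\ref{calculus} and Lemma~\ref{lemma3.5}. The only cosmetic difference is in the separable subcase, where the paper re-parametrizes the constraint system directly as $P_i(x)=(x_i,-f_{x_i}(x))\in D_i:={gph}(\partial^\pi g_i)$ rather than arguing (as you do) via a coordinate permutation that ${gph}(\partial^\pi g)\cong\prod_i{gph}(\partial^\pi g_i)$; both routes land on the same product inclusions from Lemma~\ref{lemma3.5}.
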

\begin{proof}
Since
$$
 {\cal{X}}^\pi := \left\{ x \,\big|\, 0\in \nabla f(x)+\partial^\pi g(x) \right\}=\left\{x\, \big|\, (x,-\nabla f(x)) \in gph ( \partial^\pi g)\right\}, $$
by Proposition \ref{equi_ms}, the set-valued map  $M_1(x):=\nabla f(x)+\partial^\pi g(x)$ is metrically subregular
at $(\bar x, 0)$ if and only if the set-valued map $M_2(x):=(-x, \nabla f(x))+gph (\partial^\pi g)$ is metrically subregular at $( \bar x, 0,0)$. For the nonseparable case, the results follow from Proposition \ref{sufficient-MSCQ-new}(1-3) and the nonsmooth calculus rule in Proposition \ref{calculus} by taking $P(x):= (x, -\nabla f(x))$ and $D:=gph (\partial^\pi g)$.
For the separable case, $ x\in {\cal X}^\pi$ if and only if
$$ 0\in f_{x_i}( x)+\partial^\pi g_i(x_i) \quad i=1,2,\dots,n.$$
Equivalently $ x\in {\cal X}^\pi$ if and only if
$$ (x_i,- f_{x_i}( x))\in gph (\partial^\pi g_i) \quad i=1,2,\dots,n.$$ Denote by
$$P_i(x):=(x_i,- f_{x_i}( x)), \quad D_i:=gph (\partial^\pi g_i).$$ Then $x\in {\cal X}^\pi$ if and only if
$$P_i(x) \in D_i, \quad i=1,2,\dots, n.$$
Let $P(x):=(P_1(x),\dots, P_n(x))$ and $D:=D_1\times \dots D_n$. Applying Proposition \ref{sufficient-MSCQ-new}(2-3) and Lemma \ref{lemma3.5}, we obtain the desired results for the separable case.
\end{proof}

We list in Table \ref{Cases2ofinterests} four scenarios of our interest for which the calmness conditions can be verified according to Theorem \ref{polymulti-thm}(2-3). For these four cases, $f$ is chosen from two popular loss functions, e.g., logistic loss and exponential loss, while $g$ is chosen from SCAD and MCP. The definition of SCAD penalty is defined as follows, see, e.g.,  \cite{Fan2001SCAD},
$$
\hbox{SCAD}:= \sum_{i=1}^{n} \phi(x_i),\quad \hbox{where}\quad \phi (\theta):= 	\begin{cases} \lambda|\theta|, & |\theta|\leq \lambda, \\
\frac{-\theta^2+2a\lambda|\theta|-\lambda^2}{2(a-1)}, & \lambda<|\theta|\leq a\lambda,\\
\frac{(a+1)\lambda^2}{2}, & |\theta|> a\lambda,
\end{cases}
$$with $\theta \in \bbR$, $a>2$ and $\lambda>0$. Straightforward calculation  reveals that $\phi$ is proximally regular and
$$
\partial \phi(\theta)=\partial^\pi \phi(\theta) =
\left\{\begin{array}{ll}
0, & \quad \theta < -a\lambda,\\
-\frac{1}{a-1}\theta - \frac{a\lambda}{a-1}, & \quad  - a\lambda\leq\theta < -\lambda, \\
-\lambda, & \quad  -\lambda \leq \theta < 0, \\
\left[ -\lambda, \lambda \right], & \quad  \theta = 0, \\
\lambda, & \quad  0 < \theta\leq \lambda, \\
-\frac{1}{a-1}\theta+\frac{a\lambda}{a-1}, & \quad  \lambda<\theta\leq a\lambda, \\
0, & \quad \theta> a\lambda.
\end{array}\right.
$$
The graph of $\partial  \phi(\theta)$ is marked in bold in Figure \ref{graphSCAD}.
\begin{figure}[H]
	\qquad \qquad \qquad \qquad \qquad \quad \quad \begin{tikzpicture}
	\draw[eaxis] (-3.5,0) -- (3.5,0) node[below] {$\theta$};
	\draw[eaxis] (0,-2) -- (0,2) node[above] {$\partial \phi (\theta)$};
	\draw[elegant,black,domain=-3.5:-\numlambda*\numa] plot(\x,{0});
	\draw[elegant,black,domain=-\numlambda*\numa:-\numlambda] plot(\x,{(-\x - \numa*\numlambda)/(\numa - 1)});
	\draw[elegant,black,domain=-\numlambda:0] plot(\x,{-\numlambda});
	\draw[black,line width=1.2pt] (0,-\numlambda) -- (0,\numlambda) {};
	\draw[dotted,thick,black] (-\numlambda,0) --(-\numlambda,-\numlambda) {};
	\draw[dotted,thick,black] (\numlambda,0) --(\numlambda,\numlambda) {};
	\draw[elegant,black,domain=0:\numlambda] plot(\x,{\numlambda});
	\draw[elegant,black,domain=\numlambda:\numlambda*\numa] plot(\x,{(-\x + \numa*\numlambda)/(\numa - 1)});
	\draw[elegant,black,domain=\numlambda*\numa:3.5] plot(\x,{0});
	\draw[dashed, black, line width=1pt] (0,\numlambda) node[left]{$\lambda$};
	\draw[dashed, black, line width=1pt] (0,-\numlambda) node[right]{$-\lambda$};
	\draw[dashed, black, line width=1pt] (\numlambda*\numa,0) node[below]{$a\lambda$};
	\draw[dashed, black, line width=1pt] (-\numlambda*\numa,0) node[above]{$-a\lambda$};
	\draw[dashed, black, line width=1pt] (\numlambda,0) node[below]{$\lambda$};
	\draw[dashed, black, line width=1pt] (-\numlambda,0) node[above]{$-\lambda$};
	\end{tikzpicture}\caption{Graph of the limiting subdifferential of SCAD penalty}\label{graphSCAD}
\end{figure}
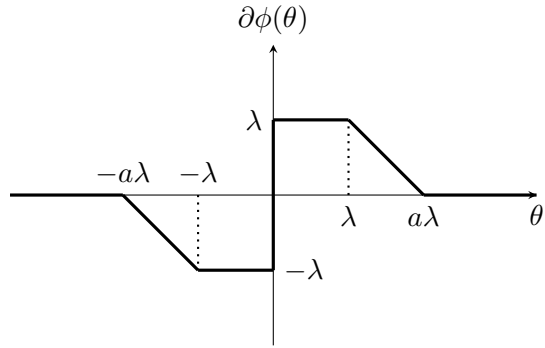

The definition of MCP penalty is as follows, see, e.g.,  \cite{Zhang2010MCP},
$$
\hbox{MCP}:=\sum_{i=1}^n \psi(x_i),\quad \hbox{where}\quad \psi(\theta):= \begin{cases}
\lambda|\theta| - \frac{\theta^2}{2 a}, & |\theta|\leq a \lambda, \\
\frac{a \lambda^2}{2}, & |\theta| > a \lambda,
\end{cases}
$$with $\theta \in \bbR$, $a>1$  and $\lambda>0$. In Table \ref{Cases2ofinterests}, we assume that $x, c_i\in \mathbb{R}^n,  d_i\in \mathbb{R}$.
\begin{table}[!h]
	\caption{Practical scenarios set II leading to the {\em{calmness}} of ${\cal{S}}_{cano}$ under conditions}\label{Cases2ofinterests}
	\begin{center}
		\begin{tabular}{c|c|c|c|c}
			\hline
			$\quad$Scenarios$\quad$ & $\quad\qquad$Case 5$\quad\qquad$ & $\quad$Case 6$\quad$ & $\qquad\quad$Case 7$\quad\qquad$ & $\quad$Case 8$\quad$ \\
			\hline
			$f(x)$ & $ \sum\limits_{i =1}^N - \log \left(1 + e^{ d_i c_i^Tx}\right)$ & $\sum\limits_{i =1}^N e^{ - d_i c_i^T x}$ & $ \sum\limits_{i =1}^N - \log \left(1 + e^{d_i c_i^Tx}\right)$ & $\sum\limits_{i =1}^N e^{ -d_i c_i^T x}$ \\
			\hline
			$g(x)$ & SCAD & SCAD & MCP & MCP \\
			\hline
			\end{tabular}
	\end{center}
\end{table}

We next illustrate how Theorem \ref{polymulti-thm}(2-3) can be applied to verify the {\em{calmness}} of ${\cal{S}}_{cano}$ pointwisely of the four cases in Table \ref{Cases2ofinterests}. For simplicity we focus on   the
case where $f(z)=e^{ - b^T z}$ with $z,b\in \mathbb{R}^n$ and $g(z)=\sum_{i = 1}^n \phi (z_i)$ is a SCAD penalty.
The same technique can be applied to all the cases listed in Table \ref{Cases2ofinterests}.
For simplicity, our discussion is based on a two-dimensional case.
We discuss three kinds of points at which the first point satisfies the strong metric subregularity (isolated calmness), the second point satisfies the metric subregularity (calmness) and the third point satisfies the metric regularity (pseudo Lipschitz continuity).

\begin{example}
 Consider problem (\ref{Basic_Problem}) with $f(z)=e^{-b_1z_1-b_2z_2}$ and $g(z)=\phi(z_1)+\phi(z_2)$.
By straightforward calculation, we have
$$\nabla f({z}) =
		-e^{-b^T z} b ,\,\, \nabla f_{z_i}(z)=e^{-b^T z} b_ib, \
		  \nabla f_{z_i}(z)^T w =e^{-b^T z} b_ib^Tw.
$$
\smallskip
\noindent {\bf Case (i)}:
$\bar{z}_1 = 0,\, -\lambda < e^{ -b^T \bar{z}}\,b_1 < \lambda$ and $\bar{z}_2 = 0,\,  e^{ -b^T \bar{z}}\,b_2 = \lambda$. In this case,  {from  Figure \ref{graphSCAD} it is easy to see  that
$$
 {\mathcal{T}}_{{gph} \left(\partial^\pi  \phi\right)} \begin{pmatrix}
		\bar{z}_1 ,
		e^{ -b^T \bar{z}}\,b_1
		\end{pmatrix}=\{0\}\times \mathbb{R}
, \quad \quad {\mathcal{T}}_{{gph} \left(\partial  \phi\right)}{{ \begin{pmatrix}
			\bar{z}_2, \
			e^{ -b^T \bar{z}}\,b_2
			\end{pmatrix}}}=
			(\mathbb{R}_-\times \{0\}) \cup \{0\}\times  \mathbb{R}_+).
$$
It follows that $w \neq 0$ such that
$~(w_i,- \nabla f_{z_i}(\bar{z})^Tw) \in {\cal T}_{gph ( \partial  \phi)}(\bar{z}_i, -f_{z_i}(\bar z)),$
if and only if $w \neq 0$ and
\begin{equation}
\label{neweq}
(w_1,
			- e^{-b^T \bar{z}} b_1b^Tw) \in \{0\}\times \mathbb{R},
			 \qquad(w_2,
			- e^{-b^T \bar{z}} b_2b^Tw)
		\in (\mathbb{R}_-\times \{0\}) \cup \{0\}\times  \mathbb{R}_+).\end{equation} However if  $w \neq 0$ and (\ref{neweq}) hold, then  $w_1 = 0$, $w_2 \neq 0$ and
$(
			w_2,
			- e^{-b^T \bar{z}} b_2b^Tw
			) \in \mathbb{R}_-\times \{0\}.
$ But this is impossible since $e^{ -b^T \bar{z}}\,b_2 = \lambda > 0$ implies  $b^T w = 0$ which means $w_2 = 0$.}
Consequently, $\mathcal {T}^{lin}_{\tilde{\cal{X}}} \left(\bar{z}\right) = \{0\}$, which means that
FOSCMS holds. In fact in this case,  the set-valued map ${\cal S}_{cano}$ is actually {isolated calm at $(0,\bar z)$}.

\medskip
\noindent {\bf Case (ii)}: $\bar{z}_1 = 0,\,  e^{ -b^T \bar{z}}\,b_1 = -\lambda$ and $\lambda < \bar{z}_2 < a  \lambda,\,  e^{ -b^T \bar{z}}\,b_2 = \lambda+\frac{\bar{z}_2 - \lambda}{1 -a}$ and $b_2 \neq -1/(\bar{z}_2 - a\lambda)$.  In this case, it is easy to see from  Figure \ref{graphSCAD} that
$$ {\mathcal{T}}_{{gph} \left(\partial  \phi\right)}
		\begin{pmatrix}
			\bar{z}_1 ,~
			e^{ -b^T \bar{z}}\,b_1
			\end{pmatrix}= (\mathbb{R}_-\times \{0\}) \cup \{0\}\times  \mathbb{R}_+) , \quad \quad {\mathcal{T}}_{{gph} \left(\partial  \phi\right)}
			\begin{pmatrix}
				\bar{z}_2 ,~
				e^{ -b^T \bar{z}}\,b_2
				\end{pmatrix} =\left \{ t(1,
											\frac{1}{1- a}
											):t\in \mathbb{R}\right \}.
$$
It follows that  $\left(w_i,-\nabla  f_{z_i}(\bar z)^Tw\right) \in {\cal T}_{gph ( \partial  \phi)}(\bar z_i, - f_{z_i}(\bar z)), i = 1,2$
if and only if
$$(
w_1,
- e^{-b^T \bar{z}} b_1b^Tw
) \in (\mathbb{R}_-\times \{0\}) \cup \{0\}\times  \mathbb{R}_+) \qquad (
w_2 ,
- e^{-b^T \bar{z}} b_2b^Tw
) =t(1,
\frac{1}{1- a})
.
$$
for some $t\in \mathbb{R}$.
Because $e^{ -b^T \bar{z}}\,b_1 < 0$, if $(
w_1,
- e^{-b^T \bar{z}} b_1b^Tw
) \in (\mathbb{R}_-\times \{0\})$,  then $b^Tw = 0$ and hence $t = 0$, $w_2 =0$.
Moreover, since $b^Tw = 0, b_1 \neq 0$, then $w_1 = 0$ as well.
Therefore { $(w_1,w_2) \neq 0$ such that
$\left(w_i,-\nabla  f_{z_i}(\bar z)^Tw\right) \in {\cal T}_{gph ( \partial  \phi)}(\bar z_i, - f_{z_i}(\bar z)), i = 1,2$
if and only if  $(w_1,w_2) \neq 0$ such that $$(
w_1,
- e^{-b^T \bar{z}} b_1b^Tw)
\in \{0\}\times\mathbb{R}_+, \qquad  (w_2,- e^{-b^T \bar{z}} b_2b^Tw
) =t(1,
\frac{1}{1- a}),
$$
for some $t\in \mathbb{R}\backslash \{0\}$, if and only if
$$(w_1,- e^{-b^T \bar{z}} b_1b^Tw) =(0, t_1), t_1 \not =0 \quad  (w_2,- e^{-b^T \bar{z}} b_2b^Tw
) =t_2(1,
\frac{1}{1- a}), t_2\not =0.
$$
From  Figure \ref{graphSCAD}, since $ \bar{z}_1=0,
e^{ -b^T \bar{z}}b_1=-\lambda$, we have
{\begin{equation}  {\mathcal{N}}_{{gph} \left(\partial  \phi\right)} \left(( \bar{z}_1,
					e^{ -b^T \bar{z}}b_1);(0,1)
									\right)=
									\mathbb{R}\times \{0\}.\label{normalcone1}
\end{equation}} Since $\lambda < \bar{z}_2 < a  \lambda,\,  e^{ -b^T \bar{z}}\,b_2 = \lambda+\frac{\bar{z}_2 - \lambda}{1 -a}$, we have
\begin{equation}  {\mathcal{N}}_{{gph} \left(\partial  \phi\right)} \left ((\bar{z}_2,
					e^{ -b^T \bar{z}}b_2)
					; (1,	\frac{1}{1- a}
									)\right) =\left \{ u(1,a-1): u\in \mathbb{R}\right  \}.\label{normalcone2}
\end{equation}
Suppose that
\begin{eqnarray*}
&&(w_1,w_2) \neq 0,\quad
\left(w_i,-\nabla  f_{z_i}(\bar z)^Tw\right) \in {\cal T}_{gph ( \partial  \phi)}(\bar z_i, - f_{z_i}(\bar z)), i = 1,2,\\
&& 0= \xi_i- \nabla f_{z_i}
 (\bar z)^T\eta, ~ (\xi_i,\eta_i) \in {\cal N}_{gph \left(\partial  \phi \right)}((\bar z_i, -f_{z_i}(\bar z));\left(w_i,-\nabla  f_{z_i}(\bar z)^Tw\right)) , \quad i=1,2 .
\end{eqnarray*}
Then 	from the analysis above,			
\begin{eqnarray}
&& (w_1,- e^{-b^T \bar{z}} b_1b^Tw) =(0, t_1), t_1 \not =0 \quad  (w_2,- e^{-b^T \bar{z}} b_2b^Tw
) =t_2(1,
\frac{1}{1- a}), t_2\not =0 \nonumber \\
&&0=\xi_1 - e^{ -b^T \bar{z}}\,b_1 (b_1\eta_1+b_2\eta_2)  , \quad 0=\xi_2 - e^{ -b^T \bar{z}}\,b_2 (b_1\eta_1+b_2\eta_2) ,\label{CQchecking1}\\
&& (\xi_1,\eta_1) \in \mathbb{R}\times \{0\}, \quad (\xi_2,\eta_2) \in \left \{ u(1,a-1): u\in \mathbb{R}\right  \},\nonumber
\end{eqnarray}
 where the last inclusions follow by (\ref{normalcone1}) and (\ref{normalcone2}).  It follows that $\eta_1 = 0, \xi_2=u, \eta_2=u(a-1)$.
 Hence (\ref{CQchecking1}) becomes
$$
0=\xi_1 - e^{ -b^T \bar{z}}\,b_1 b_2u(a-1), \quad 0=u - e^{ -b^T \bar{z}}\,b_2^2u(a-1) .
$$}
Form the second equality, we have
$$
\begin{aligned}
u - b_2u(a - 1)e^{ -b^T \bar{z}}\,b_2 &= u\left(1-b_2(a-1)(\lambda+\frac{\bar{z}_2 - \lambda}{1 -a})\right) \\
& = u\left(1+b_2(\bar{z}_2 - a\lambda)\right) = 0,
\end{aligned}
$$
which implies that $u=0$ provided $b_2 \neq -1/(\bar{z}_2 - a\lambda)$ and hence that $\xi_1 = 0$.
It follows that $(\xi,\eta)=0$.
Hence FOSCMS holds at $\bar z$ which implies that the set-valued map ${\cal S}_{cano}$ is calm around $(0,\bar z)$.

\medskip
\noindent {\bf Case (iii)}: $\bar{z}_1 = 0,\,  -\lambda < e^{ -b^T \bar{z}}\,b_1 < \lambda$ and $0 < \bar{z}_2 < \lambda,\,  e^{ -b^T \bar{z}}\,b_2 = \lambda$.
We first calculate the limiting normal cone to $ {gph} \left(\partial \phi \right)$ at $(\bar{z}_i ,~
e^{ -b^T \bar{z}}\,b_i)$.
From  Figure \ref{graphSCAD}, since $ \bar{z}_1=0,
-\lambda < e^{ -b^T \bar{z}}\,b_1 < \lambda$, we have
\begin{equation}  {\mathcal{N}}_{{gph} \left(\partial \phi\right)} \left( \bar{z}_1,
e^{ -b^T \bar{z}}b_1
\right)=\mathbb{R}\times
\{0\}. \label{normalcone_1}
\end{equation} Since $0 < \bar{z}_2 < \lambda,\,  e^{ -b^T \bar{z}}\,b_2 = \lambda$, we have
\begin{equation}  {\mathcal{N}}_{{gph} \left(\partial  \phi\right)} \left(\bar{z}_2,
e^{ -b^T \bar{z}}b_2
\right) =\{0\}\times \mathbb{R}.\label{normalcone_2}
\end{equation}
Suppose that
\begin{eqnarray*}
&& 0= \xi_i- \nabla f_{z_i}
 (\bar x)^T\eta, ~ (\xi_i,\eta_i) \in {\cal N}_{gph \left(\partial  \phi \right)}(\bar z_i, -f_{z_i}(\bar z)), \quad i=1,2 .
\end{eqnarray*}
It follows that						
\begin{eqnarray*}
0=\xi_1 - e^{ -b^T \bar{z}}\,b_1 (b_1\eta_1+b_2\eta_2)  , \quad \xi_2 - e^{ -b^T \bar{z}}\,b_2 (b_1\eta_1+b_2\eta_2)  = 0.
\end{eqnarray*}
Then by (\ref{normalcone_1})- (\ref{normalcone_2}), since $\eta_1=0, \xi_2=0$, it follows that
$$
0=\xi_1 - e^{ -b^T \bar{z}}\,b_1b_2 \eta_2, \quad - b_2\eta_2e^{ -b^T \bar{z}}\,b_2 = 0 .
$$
which implies $\xi_1=\eta_2 = 0$.
Hence NNAMCQ holds at $\bar z$. So in this case,  the set-valued map ${\cal S}_{cano}$ is actually pseudo-Lipschitz around $(0,\bar z)$.
\end{example}

\section{Application of the perturbation analysis technique}

Our last goal is to provide an answer to question \textbf{Q}. Taking the ADMM and PDHG for illustration, the new perturbation analysis technique determines an appropriately perturbed stationary point set-valued map and hence the calmness condition tailored to the algorithm under investigation.

\subsection{Error bound and linear convergence of ADMM}
To recall the ADMM, we focus on the convex minimization model with linear constraints and an objective function which is the sum of two functions without coupled variables:
\begin{equation}\label{Origianlproblem}
\begin{aligned}
\min_{x \in {X},y \in {Y}} \quad &\theta_1(x) + \theta_2(y) \\
s.t. \quad & Ax + By = b,
\end{aligned}
\end{equation}
where $\theta_1 : \mathbb{R}^{n_1} \rightarrow \mathbb{R}$ and $\theta_2 : \mathbb{R}^{n_2} \rightarrow \mathbb{R}$ are both convex (not necessarily smooth) functions, $A \in \mathbb{R}^{m \times n_1}$ and $B \in \mathbb{R}^{m \times n_2}$ are given matrices, $X \subseteq \mathbb{R}^{n_1}$ and $Y \subseteq \mathbb{R}^{n_2}$ are convex sets,  and $b\in\mathbb{R}^m$. Define the mapping $\phi : \mathbb{R}^{n_1} \times \mathbb{R}^{n_2} \times \mathbb{R}^{m} \rightrightarrows \mathbb{R}^{n_1} \times \mathbb{R}^{n_2} \times \mathbb{R}^{m}$ by
\begin{equation}\label{KKTmapping}
\phi(x,y,\lambda) = \begin{pmatrix}
\partial \theta_1(x) - A^T \lambda + \mathcal{N}_{{X}}(x)\\
\partial \theta_2(y) - B^T \lambda + \mathcal{N}_{{Y}}(y)\\
Ax + By - b
\end{pmatrix}.
\end{equation}
Then it is obvious that the KKT system can be written as  $0 \in \phi(x, y, \lambda)$, where $\lambda$ is the multiplier.

The iterative scheme of
generalized proximal version of the ADMM (GPADMM for short)  for (\ref{Origianlproblem}) reads as
{\small
	\begin{equation}\label{ADMM}
	\left\{
	\begin{aligned}
	x^{k+1} &= \arg \min_{x \in {X}} ~ \{\theta_1(x) - (\lambda^k)^T(Ax+By^k-b) + \frac{\beta}{2}\|Ax+By^k-b\|^2 + \frac{1}{2}\|x-x^k\|^2_{D_1}\}, \\
	y^{k+1} &= \arg \min_{y \in {Y}} ~ \{\theta_2(y) - (\lambda^k)^T(Ax^{k+1}+By-b) + \frac{\beta}{2}\| Ax^{k+1}+By-b\|^2  + \frac{1}{2}\|y-y^k\|^2_{D_2} \},\\
	\lambda^{k+1} & = \lambda^k - \beta( Ax^{k+1} + By^{k+1}-b),
	\end{aligned}
	\right.
	\end{equation}
where $\lambda^k$ is an estimate of the Lagrange multiplier, $\beta>0$ is a penalty parameter
{and $D_1,D_2$ are positive semidefinite matrices.}
By the optimality conditions for the subproblem in each iteration, we have
\begin{equation}\label{r_opt_con_k}
\begin{pmatrix}
D_1(x^{k}-x^{k+1}) - {\beta}A^TB(y^k - y^{k+1}) \\
D_2(y^{k}-y^{k+1}) \\
\frac{1}{\beta}(\lambda^{k}-\lambda^{k+1})
\end{pmatrix} \in 	\phi(x^{k+1},y^{k+1},\lambda^{k+1}).
\end{equation}
Following the perturbation technique, we introduce the perturbation as the difference between two consecutive generated points, i.e., $$p^k = (p^k_1, p^k_2, p^k_3) = \left(x^{k}-x^{k+1}, y^{k}-y^{k+1}, \lambda^{k}-\lambda^{k+1}\right) \in \mathbb{R}^{n_1} \times \mathbb{R}^{n_2} \times \mathbb{R}^{m}.$$ The approximate KKT condition (\ref{r_opt_con_k}) consequently results in the following inclusion in a more compact form
\begin{equation}\label{compactKKTmapping}
H\begin{bmatrix}
p_1^k \\ p_2^k \\ p_3^k
\end{bmatrix}
\in \phi(x^{k+1},y^{k+1},\lambda^{k+1}),
\end{equation}
with
\[
H = \begin{bmatrix}
D_1 & - {\beta}A^TB & 0 \\
0 & D_2&0 \\
0 & 0 &\frac{1}{\beta}I
\end{bmatrix}.
\]
The compact form (\ref{compactKKTmapping}) simply motivates the canonically perturbed KKT solution map $S : \mathbb{R}^{n_1} \times \mathbb{R}^{n_2} \times \mathbb{R}^{m} \rightrightarrows \mathbb{R}^{n_1} \times \mathbb{R}^{n_2} \times \mathbb{R}^{m} $
\begin{equation}\label{mappingS}
S(p) := \{ (x,y,\lambda) ~|~ p \in \phi(x,y,\lambda) \}
\end{equation}
where $p = (p_1, p_2, p_3) \in \mathbb{R}^{n_1} \times \mathbb{R}^{n_2} \times \mathbb{R}^{m}$ is regarded as the canonical perturbation. Given the convergence of PGADMM, the proof for its linear convergence rate toward a KKT solution $(x^{*},y^{*},\lambda^{*})$ is purely technical under the calmness of $S(p)$ at $(0, x^{*},y^{*},\lambda^{*})$, see, e.g., \cite{PEB-SIAMNUM,yang-han}.

If we focus on proximal version of the ADMM (PADMM for short), i.e., $D_2 = 0$ in the PGADMM \eqref{ADMM} (includes the original ADMM where $D_1 = D_2 = 0$), (\ref{compactKKTmapping}) reduces as
$$
H_0 p^k =
\begin{pmatrix}
D_1(x^{k}-x^{k+1}) - {\beta}A^TB(y^k - y^{k+1}) \\ 0 \\\frac{1}{\beta}(\lambda^{k}-\lambda^{k+1})
\end{pmatrix}
\in \phi(x^{k+1},y^{k+1},\lambda^{k+1}),
$$
where $$p^k = (p^k_1, p^k_2, p^k_3) = \left(x^{k}-x^{k+1}, y^{k}-y^{k+1}, \lambda^{k}-\lambda^{k+1}\right) \in \mathbb{R}^{n_1} \times \mathbb{R}^{n_2} \times \mathbb{R}^{m},$$
and
\[
H_0 = \begin{bmatrix}
D_1 & - {\beta}A^TB & 0 \\
0 & 0 &0 \\
0 & 0 &\frac{1}{\beta}I
\end{bmatrix}.
\]
In fact, the PADMM iteration introduces no perturbation to the KKT component $S_g$ where
$$
S_g := \{ (x,y,\lambda) ~|~ 0 \in \partial \theta_2(y) - B^T \lambda + \mathcal{N}_{{Y}}(y) \}.
$$
Inspired by this observation, in the recent paper \cite{PEB-SIAMNUM}, the perturbation technique motivates a partially perturbed KKT mapping $S_P : \mathbb{R}^{n_1} \times \mathbb{R}^{m} \rightrightarrows \mathbb{R}^{n_1} \times \mathbb{R}^{n_2} \times \mathbb{R}^{m} $ as
$$
S_P(p) := \{ (x,y,\lambda) \in S_g ~|~ p \in \phi_P(x,y,\lambda) \},
$$
where $\phi_P : \mathbb{R}^{n_1} \times \mathbb{R}^{n_2} \times \mathbb{R}^{m} \rightrightarrows \mathbb{R}^{n_1} \times \mathbb{R}^{m} $ is defined as
\begin{equation*}\label{phi-defi}
\phi_P(x,y,\lambda) = \begin{pmatrix}
\partial \theta_1(x) - A^T \lambda + \mathcal{N}_{{X}}(x)\\
Ax + By - b
\end{pmatrix}.
\end{equation*}
The calmness of $S_P$, which is specifically tailored to the sequence $\{(x^k,y^k,\lambda^k)\}$ generated by the PADMM, is in general weaker than the calmness of $S$. However, the calmness of $S_P$ suffices to ensure the linear rate convergence (see \cite{PEB-SIAMNUM} for more details).

\subsection{Error bound and linear convergence of PDHG}
We close this section by considering the min-max problem
\begin{equation}\label{problem}
\min_x\max_y ~ \phi(x,y):= \phi_1(x) + \langle y, \mathcal {K} x \rangle - \phi_2(y),
\end{equation}
where $x \in \mathbb{R}^n$, $y\in \mathbb{R}^m$, $\phi_1 : \mathbb{R}^n \rightarrow (-\infty, \infty]$ and $\phi_2 : \mathbb{R}^m \rightarrow (-\infty, \infty]$ are convex, proper, lower semicontinuous convex functions and $\mathcal {K} \in \mathbb{R}^{m \times n}$ is a coupling matrix.

The work \cite{CP} proposed a first-order primal-dual type method named primal-dual hybrid gradient (PDHG) method, where at any iteration both primal variable $x$
and dual variable $y$ are updated by descent and ascent gradient projection steps respectively. If the PDHG method in \cite{CP} is applied to the saddle-point problem (\ref{problem}) {with parameter $\theta = 1$}, the iteration scheme reads as the following.
$$\label{PDHG_eq}
\left\{ \begin{aligned}
x^{k+1} &= \arg\min_{x}\left\{ \phi_1(x) + \langle y^k,\mathcal {K} x \rangle + \frac{1}{2\tau}\|x - x^k\|^2 \right\}, \\		
y^{k+1} &= \arg\max_{y}\left\{  \langle y,\mathcal {K}(2x^{k+1}-x^k) \rangle - \phi_2(y)  - \frac{1}{2\sigma}\|y - y^k\|^2 \rangle \right\}, \\
\end{aligned} \right.
$$where $\tau, \sigma > 0$ are step size parameter.
At iteration $k$ of the PDHG, the optimality condition expresses as
$$
0 \in \begin{pmatrix}
\partial \phi_1(x^{k+1}) + \mathcal {K}^Ty^{k+1} \\
\partial \phi_2(y^{k+1}) - \mathcal {K}x^{k+1}
\end{pmatrix} + \begin{pmatrix}
\frac{1}{\tau}I & -\mathcal {K}^T \\
-\mathcal {K} & \frac{1}{\sigma}
\end{pmatrix}\begin{pmatrix}
x^{k+1} - x^k \\ y^{k+1} - y^k
\end{pmatrix}.
$$
Following the perturbation technique, we introduce perturbation to the place where the difference between two consecutive generated points appears, which further induces the canonically perturbed solution map $S : \mathbb{R}^{n} \times \mathbb{R}^{m} \rightrightarrows \mathbb{R}^{n} \times \mathbb{R}^{m} $
\begin{equation*}\label{mappingS_T}
S(p) := \{ (x,y) ~|~ p \in T(x,y) \}
\end{equation*}
where $p = (p_1, p_2) \in \mathbb{R}^{n} \times \mathbb{R}^{m}$ represents the canonical perturbation,
$T : \mathbb{R}^{n+ m}\rightrightarrows \mathbb{R}^{n+m}$ is a set-valued map defined as following
\begin{equation*}\label{TMdefi}
 T(x,y) := \begin{pmatrix}
\partial \phi_1(x) + \mathcal {K}^Ty \\
\partial \phi_2(y) - \mathcal {K}x
\end{pmatrix} .
\end{equation*}
Under the calmness of $S$, the linear convergence of PDHG is purely technical (see \cite{he2009proximal,he2012convergence} for details).

{\footnotesize{

}}

\section{Appendix}

\subsection{Proof of Lemma \ref{sufficient-cost-to-go}}
(1) Since $x^{k+1}$ is the optimal solution of the proximal operation (\ref{Basic_Proximal_Problem}) with $a=x^k-\gamma \nabla f(x^k)$, we have
\begin{eqnarray}
&& g(x^{k+1}) + \frac{1}{2\gamma}\left\| x^{k+1} - \left( x^k - \gamma \nabla f(x^k) \right) \right\|^2 \le g(x^k) + \frac{1}{2\gamma}\left\| \gamma \nabla f(x^k) \right\|^2,\nonumber
	\end{eqnarray}which can be reformulated as
	\begin{equation}\label{Sufficient-descent-inequality-1}
	g(x^{k+1}) + \frac{1}{2\gamma}\left\| x^{k+1} - x^k \right\|^2 + \left\langle \nabla f(x^k), x^{k+1} - x^k \right\rangle - g(x^k) \le 0.
	\end{equation}Furthermore, since $\nabla f(x)$ is globally Lipschitz continuous with the Lipschitz constant $L$, we have
	$$
	f(x^{k+1})\le f(x^k) + \left\langle \nabla f(x^k), x^{k+1} - x^k \right\rangle + \frac{L}{2}\left\| x^{k+1} - x^k \right\|^2.
	$$ Adding the above inequality to (\ref{Sufficient-descent-inequality-1}) we  obtain
	$$
	F(x^{k+1}) - F(x^k) \le \left( \frac{L}{2} - \frac{1}{2\gamma} \right) \left\| x^{k+1} - x^k \right\|^2.
	$$As a result if $\gamma < \frac{1}{L}$ we have (\ref{Sufficient-descent-inequality-0}) with $\kappa_1 := \frac{1}{2\gamma} - \frac{L}{2}$.
	
	\medskip
	(2) By the optimality of  $x^{k+1}$ we have that for any $x$,
	$$
	g(x^{k+1}) + \frac{1}{2\gamma}\left\| x^{k+1} - x^k + \gamma \nabla f(x^k) \right\|^2 \le g(x) + \frac{1}{2\gamma}\left\| x - x^k + \gamma \nabla f(x^k) \right\|^2,
	$$ which can be reformulated as
	$$
	g(x^{k+1})  - g(x) \le \frac{1}{2\gamma}\left\| x - x^k \right\|^2 - \frac{1}{2\gamma}\left\| x^{k+1} - x^k \right\|^2 + \left\langle \nabla f(x^k), x - x^{k+1}  \right\rangle.
	$$
	By the Lipschitz continuity of  $\nabla f(x)$,
	$$
	f(x) \ge f(x^{k+1}) + \left\langle \nabla f(x^{k+1}), x - x^{k+1} \right\rangle - \frac{L}{2}\left\| x - x^{k+1} \right\|^2.
	$$  By  the above two  inequalities we obtain
	\begin{eqnarray}
	F(x^{k+1}) - F(x)&\le&\frac{1}{2\gamma}\left\| x - x^k \right\|^2 - \frac{1}{2\gamma}\left\| x^{k+1} - x^k \right\|^2 + \left\langle \nabla f(x^k), x - x^{k+1} \right\rangle \nonumber\\
	&&\qquad - \left\langle \nabla f(x^{k+1}), x - x^{k+1} \right\rangle + \frac{L}{2} \left\| x - x^{k+1} \right\|^2\nonumber\\
	&\le&\frac{1}{\gamma}\left\| x - x^{k+1}\right\|^2 + \frac{1}{\gamma} \left\| x^{k+1} - x^k \right\|^2 - \frac{1}{2\gamma}\left\| x^{k+1} - x^k \right\|^2 + \nonumber\\
	&&\qquad \left\langle \nabla f(x^k) - \nabla f(x^{k+1}), x - x^{k+1} \right\rangle + \frac{L}{2} \left\| x - x^{k+1} \right\|^2\nonumber\\
	&\le&\frac{1}{\gamma}\left\| x - x^{k+1}\right\|^2+ \frac{1}{\gamma} \left\| x^{k+1} - x^k \right\|^2-\frac{1}{2\gamma} \left\| x^{k+1} - x^k \right\|^2 + \frac{L}{2} \left\| x^{k+1} - x^k \right\|^2 + \nonumber\\
	&&\qquad  \frac{1}{2} \left\| x - x^{k+1} \right\|^2 + \frac{L}{2} \left\| x - x^{k+1} \right\|^2\nonumber\\
	&=& \left( \frac{1}{\gamma} + \frac{L+1}{2} \right) \left\| x - x^{k+1} \right\|^2 + \left( \frac{L}{2} + \frac{1}{2\gamma} \right) \left\| x^k - x^{k+1} \right\|^2,
	\end{eqnarray}
	from which we can obtain (\ref{cost-to-go-estimate-0}) with $\kappa_2 := \max\left\{ \left( \frac{1}{\gamma} + \frac{L+1}{2} \right), \left( \frac{L}{2} + \frac{1}{2\gamma} \right) \right\}$.

\subsection{Proof of Theorem \ref{Thm3.2}}
	In the proof, we denote by $\zeta:=F(\bar x)$ for succinctness. And we recall that the proper separation of the stationary value condition holds on $\bar{x} \in {{\cal X}}^\pi$, i.e.,
	there exists $ \delta > 0$ such that
	\begin{equation}\label{PBisocost_proof}
	x \in {{\cal X}}^\pi\cap {\mathbb{B}} ( \bar{x},\delta )\quad \Longrightarrow \quad F(x) = F(\bar{x}).
	\end{equation}
	Without lost of generality, we assume that {$\epsilon < \delta/(\kappa +1)$} throughout the proof.
	
	Step 1. We prove that $\bar x$ is a stationary point and
	\begin{equation} \lim_{k\rightarrow \infty} \|x^{k+1}
	-x^{k}\|=0.\label{equalconv}  \end{equation}
	Adding the inequalities in (\ref{Sufficient-descent-inequality-0}) starting from iteration $k=0$ to an arbitrary positive integer $K$, we obtain
	$$
	\sum_{k=0}^{K} \left\| x^{k+1} - x^k \right\|^2 \le \frac{1}{\kappa_1} \left( F(x^0) - F(x^{K+1}) \right)\le \frac{1}{\kappa_1} \left( F(x^0) - F_{\min} \right) < \infty.
	$$ It follows that
	$
	\sum_{k=0}^{\infty} \left\| x^{k+1} - x^k \right\|^2 < \infty,
	$ and consequently
	(\ref{equalconv}) holds.  Let $\{ x^{k_i} \}_{i=1}^\infty$ be a  convergent subsequence of $\left\{ x^k \right\}$ such that $x^{k_i}\rightarrow \bar{x}$ as $i\rightarrow \infty$. Then by (\ref{equalconv}), we have
	\begin{equation}
	\lim_{i\rightarrow \infty}  x^{k_i}=\lim_{i\rightarrow \infty} x^{k_i-1}=\bar x.\label{eqn1}
	\end{equation}
	Since
	\begin{equation}\label{x-bar-stationary}
	x^{k_i}
	\in\hbox{Prox}_g^\gamma \left  (x^{k_i-1} -\gamma \nabla f(x^{k_i-1}) \right ),
	\end{equation}
	let $i\rightarrow \infty$ in (\ref{x-bar-stationary}) and by the outer semicontinuity of $\hbox{Prox}_g^\gamma (\cdot)$ (see \cite[Theorem 1.25]{Rockafellar2009variational}) and continuity of $\nabla f$, we have
	\[
	\bar{x}\in \hbox{Prox}_g^\gamma \left(\bar{x} -\gamma \nabla f(\bar{x}) \right ),
	\]
Using the definition of the proximal operator and applying the optimality condition and we have
	$$
	0\in  \nabla f \left(\bar{x}\right) + \partial^\pi g \left(\bar{x}\right) ,
	$$
	and so $\bar x\in {\cal X}^\pi$.
	
	Step 2. Given $\hat{\epsilon} > 0$ such that $\hat{\epsilon} <  \delta/\epsilon - \kappa -1$, for each $k > 0$, we can find $\bar{x}^{k} \in {\tilde{\cal{X}}}$ such that  $$\left\| \bar{x}^{k} - x^{k} \right\| \le \min\left\{\sqrt{d\left( x^{k},\tilde{\cal{X}} \right)^2 + \hat{\epsilon}\|x^k - x^{k-1}\|^2}, d\left( x^{k},\tilde{\cal{X}} \right) + \hat{\epsilon}\|x^k - x^{k-1}\|\right\}.$$ It follows by  the cost-to-estimate condition (\ref{cost-to-go-estimate-0}) we have
	\begin{equation}\label{cost-to-go-estimate-01}
	F(x^{k}) - F(\bar{x}^{k})\le \hat{ \kappa}_2 \left( \hbox{dist} \left( x^{k}, {\cal{X}}^\pi \right)^2 + \left\| x^{k} - x^{k-1} \right\|^2 \right),
	\end{equation}
	with $\hat{ \kappa}_2 = \kappa_2(1+\hat{\epsilon})$.
	Now we use the method of mathematical induction to prove that there exists $k_{\ell}>0$ such that
	for all $j \geq  k_{\ell}$,
	\begin{eqnarray}\label{neighborhood-0}
	&&	x^{j}\in \mathbb{B} \left( \bar{x}, \epsilon \right),\quad x^{j+1}\in \mathbb{B} \left( \bar{x}, \epsilon \right),\quad F(\bar{x}^j) = \zeta ,\quad {{F(\bar{x}^{j+1}) = \zeta}},\\
	&&\label{cost-to-go-estimate-1}
	F(x^{j+1}) - \zeta \le \hat{ \kappa}_2 \left( \hbox{dist} \left( x^{j+1}, {\cal{X}}^\pi \right)^2 + \left\| x^{j+1} - x^j \right\|^2 \right),
	\end{eqnarray}
	\begin{equation}
	\label{sum-iteration-error-0}
	\sum\limits_{i=k_{\ell}}^j \left\| x^i - x^{i+1} \right\| \le \frac{\left\| x^{k_{\ell}-1} - x^{k_{\ell}} \right\| - \left\| x^{j} - x^{j+1} \right\|}{2} + c\left[\sqrt{F(x^{k_{\ell}}) - \zeta} - \sqrt{F(x^{j+1}) - \zeta}\right],
	\end{equation} where the  constant $c:=\frac{2\sqrt{\hat{ \kappa}_2(\kappa^2+1)}}{\kappa_1}> 0$.
	
	By (\ref{eqn1})  and the fact that $F$ is continuous in its domain,
	there exists  $k_{\ell} > 0$ such that $x^{k_\ell}\in \mathbb{B} \left( \bar{x},\epsilon \right)$, $x^{k_\ell+1}\in \mathbb{B} \left( \bar{x},\epsilon \right)$,
	\begin{eqnarray}
	&&	{{\left\| x^{k_{\ell}} - \bar{x} \right\| + \frac{\left\| x^{k_{\ell}-1} - x^{k_{\ell}} \right\| }{2} + c\left[\sqrt{F(x^{k_{\ell}}) - \zeta}\right]\le \frac{\epsilon}{2}}},
	\label{starstar} \\
	&&
	{{\left\| x^{k+1} - x^{k} \right\| < \frac{\epsilon}{2} , \quad \forall k \ge k_{\ell} - 1}},
	\label{star}
	\end{eqnarray}	
	$$
	{{\left\|\bar{x}^{k_{\ell}} - \bar{x}\right\| \le \left\|\bar{x}^{k_{\ell}} - x^{k_{\ell}}\right\| + \left\|x^{k_{\ell}} - \bar{x}\right\| \overset{(\ref{error-bound-condition-1})}{\le} (\kappa+\hat{\epsilon})  \left\|x^{k_{\ell}} - x^{k_{\ell} - 1}\right\| + \left\|x^{k_{\ell}} - \bar{x}\right\| < {(\kappa +\hat{\epsilon} + 2)\epsilon/2} < \delta}},
	$${\color{black}{which indicates $\bar{x}^{k_{\ell}}\in \tilde{\cal{X}} \cap {\mathbb{B}}\left( \bar{x},\delta \right)$.
			It follows by the proper separation of the stationary value condition (\ref{PBisocost_proof}) that  $F\left( \bar{x}^{k_{\ell}} \right) = \zeta$.}}
	
	Before inducing (\ref{neighborhood-0})-(\ref{sum-iteration-error-0}), we should get ready by showing that for $j\ge k_{\ell}$,
	{if (\ref{neighborhood-0}) and (\ref{cost-to-go-estimate-1}) hold}, then
	\begin{equation}\label{iteration-error-bound}
	2\left\| x^{j} - x^{j+1} \right\| \le c\left[\sqrt{F(x^{j}) - \zeta} - \sqrt{F(x^{j+1}) - \zeta}\right] + \frac{\left\| x^{j} - x^{j+1} \right\| + \left\| x^{j-1} - x^{j} \right\|}{2}.
	\end{equation}
	Firstly, since $x^{j}\in \mathbb{B} \left( \bar{x}, \epsilon \right)$, $F(\bar{x}^j) = \zeta$ and (\ref{cost-to-go-estimate-01}) holds,
	it follows from (\ref{error-bound-condition-1}) that \begin{equation}
	F(x^j)-\zeta\leq \hat{ \kappa}_2(\kappa \|x^j-x^{j-1}\|^2+ \|x^j-x^{j-1}\|^2) =\kappa_3^2\|x^j-x^{j-1}\|^2,\label{new}
	\end{equation}
	where  $\kappa_3 := \sqrt{\hat{ \kappa}_2 \left( \kappa^2 + 1 \right)}.$
	Similarly, since $x^{j+1}\in \mathbb{B} \left( \bar{x}, \epsilon \right)$ and {{$F(\bar{x}^{j+1}) = \zeta$, by (\ref{cost-to-go-estimate-01})}} and  condition (\ref{error-bound-condition-1}), we have
	{\begin{eqnarray}\label{cost-to-go-estimate-2}
		F(x^{j+1}) - \zeta
		&\le&\kappa_3^2 \left\| x^{j+1} - x^j \right\|^2.
		\end{eqnarray}} As a result, we can obtain
	\begin{eqnarray}
	\sqrt{F(x^{j}) - \zeta} - \sqrt{F(x^{j+1}) - \zeta}&=&\frac{\left(F(x^j) - \zeta\right) - \left(F(x^{j+1}) - \zeta\right)}{\sqrt{F(x^{j}) - \zeta} + \sqrt{F(x^{j+1}) - \zeta}}\nonumber\\
	&=& \frac{F(x^j) -F(x^{j+1})}{\sqrt{F(x^{j}) - \zeta} + \sqrt{F(x^{j+1}) - \zeta}}\nonumber\\
	&\overset{(\ref{Sufficient-descent-inequality-0}) (\ref{new})(\ref{cost-to-go-estimate-2})}{\ge} & \frac{\kappa_1 \left\| x^{j+1} - x^j \right\|^2}{\kappa_3\left( \left\| x^{j} - x^{j-1} \right\| + \left\| x^{j+1} - x^j \right\| \right)}.
	\end{eqnarray}After defining $c: = \frac{2\kappa_3}{\kappa_1}$, we have
	$$
	\left( c \left[\sqrt{F(x^{j}) - \zeta} - \sqrt{F(x^{j+1}) - \zeta}\right] \right)\left( \frac{\left\| x^{j} - x^{j+1} \right\| + \left\| x^{j-1} - x^{j} \right\|}{2} \right)\ge \left\| x^{j+1} - x^j \right\|^2,
	$$ from which by applying $ab\le \left( \frac{a+b}{2} \right)^2$ we establish (\ref{iteration-error-bound}).

	Next we proceed to prove the three properties (\ref{neighborhood-0})-(\ref{sum-iteration-error-0}) by induction on $j$. For $j = k_{\ell}$, we have
	$$
	x^{k_{\ell}}\in \mathbb{B} \left( \bar{x}, \epsilon \right),\quad x^{k_{\ell}+1}\in \mathbb{B} \left( \bar{x}, \epsilon \right),\quad F(\bar{x}^{k_\ell}) = \zeta,
	$$
	and since
		$$
	\begin{aligned}
	\left\| \bar{x}^{k_{\ell}+1} - \bar{x} \right\| &\le \| \bar{x}^{k_{\ell}+1} - x^{k_{\ell}+1} \| + \| x^{k_{\ell}+1} -\bar{x} \| \\
	&\le (\kappa+\hat{\epsilon})\| x^{k_{\ell}+1} - x^{k_{\ell}} \| + \epsilon \|\\
	& < (\kappa + \hat{\epsilon} +2)\epsilon/2 < \delta,
	\end{aligned}
	$$
	where the second inequality follows from the definition of $\bar{x}^k$ and \eqref{error-bound-condition-1}, the third inequality follows from (\ref{star}). {\color{black}{It follows by (\ref{PBisocost_proof}) that  $F(\bar{x}^{k_{\ell}+1}) = \zeta$}}, and hence by \eqref{cost-to-go-estimate-01},
	$$
	F(x^{k_{\ell}+1}) - \zeta \le \hat{ \kappa}_2 \left( \hbox{dist} \left( x^{k_{\ell}+1}, \tilde{\cal{X}} \right)^2 + \left\| x^{k_{\ell}+1} - x^{k_\ell} \right\|^2 \right),
	$$
	which is (\ref{cost-to-go-estimate-1}) with $j=k_{\ell}$.
	Note that property	(\ref{sum-iteration-error-0}) for $j = k_{\ell}$ can  be obtained directly through (\ref{iteration-error-bound}).
	
	Now  suppose  (\ref{neighborhood-0}) \eqref{cost-to-go-estimate-1} and (\ref{sum-iteration-error-0}) hold for certain $j> k_{\ell}$. By induction we also want to show that  (\ref{neighborhood-0}) \eqref{cost-to-go-estimate-1} and (\ref{sum-iteration-error-0}) hold for $j+1$.  We have
	\begin{eqnarray}
	\left\| x^{j+2} - \bar{x} \right\|&\le& \left\| x^{k_{\ell}} - \bar{x} \right\| + \sum_{i={k_{\ell}}}^{j} \left\| x^i - x^{i+1} \right\| + \left\| x^{j+1} - x^{j+2} \right\| \nonumber\\
	&<& \left\| x^{k_{\ell}} - \bar{x} \right\| + \frac{\left\| x^{k_{\ell}-1} - x^{k_{\ell}} \right\| - \left\| x^{j} - x^{j+1} \right\|}{2} + c\left[\sqrt{F(x^{k_{\ell}}) - \zeta} - \sqrt{F(x^{j+1}) - \zeta}\right] + \frac{\epsilon}{2} \nonumber\\
	&\le& \left\| x^{k_{\ell}} - \bar{x} \right\| + \frac{\left\| x^{k_{\ell}-1} - x^{k_{\ell}} \right\| }{2} + c\left[\sqrt{F(x^{k_{\ell}}) - \zeta}\right] + \frac{\epsilon}{2} \le \epsilon,\nonumber
	\end{eqnarray}
	where the second inequality follows from (\ref{sum-iteration-error-0}) and (\ref{star}) and the last inequality follows from (\ref{starstar}).
	{Since  $x^{j+2}\in \mathbb{B}(\bar x, \epsilon)$}, by the definition of $\bar{x}^j$ and (\ref{error-bound-condition-1}), there holds that
	$$\begin{aligned}
	\left\| \bar{x}^{j+2} - \bar{x} \right\| &\le \| \bar{x}^{j+2} - x^{j+2} \| + \| x^{j+2} - \bar{x} \| \\
	&\le (\kappa+\hat{\epsilon})\| x^{j+2} - x^{j+1} \| + \epsilon \\
	& < (\kappa +\hat{\epsilon} +2)\epsilon/2 <  \delta,
	\end{aligned}$$
	where the third inequality follows from (\ref{star}). {\color{black}{It follows from the proper separation of stationary value assumption (\ref{PBisocost_proof}) that $F(\bar{x}^{j+2}) = \zeta$. }} Consequently by \eqref{cost-to-go-estimate-01}, we have
	\begin{equation*}
	F(x^{j+2}) - \zeta \le \hat{ \kappa}_2 \left( \hbox{dist} \left( x^{j+2}, \tilde{\cal{X}} \right)^2 + \left\| x^{j+2} - x^{j+1} \right\|^2 \right).
	\end{equation*}
	So far we have shown that (\ref{neighborhood-0})-(\ref{cost-to-go-estimate-1}) hold for $j+1$.
	Moreover
	\begin{eqnarray}
	&&\hspace{-25pt} \sum\limits_{i=k_{\ell}}^{j+1} \left\| x^i - x^{i+1} \right\| \nonumber\\
	&\overset{(\ref{sum-iteration-error-0})}{\le}& \frac{\left\| x^{k_{\ell}-1} - x^{k_{\ell}} \right\| - \left\| x^{j} - x^{j+1} \right\|}{2} + c\left[\sqrt{F(x^{k_{\ell}}) - \zeta} - \sqrt{F(x^{j+1}) - \zeta}\right] + \left\| x^{j+1} - x^{j+2} \right\|\nonumber\\
	&\overset{(\ref{iteration-error-bound}) \mbox{ for } j+1}{\le}& \frac{\left\| x^{k_{\ell}-1} - x^{k_{\ell}} \right\| - \left\| x^{j} - x^{j+1} \right\|}{2} + c\left[\sqrt{F(x^{k_{\ell}}) - \zeta} - \sqrt{F(x^{j+1}) - \zeta}\right] + \nonumber\\
	&& c\left[\sqrt{F(x^{j+1}) - \zeta} - \sqrt{F(x^{j+2}) - \zeta}\right] + \frac{\left\| x^{j+1} - x^{j+2} \right\| + \left\| x^{j} - x^{j+1} \right\|}{2} - \left\| x^{j+1} - x^{j+2} \right\|\nonumber\\
	&=& \frac{\left\| x^{k_{\ell}-1} - x^{k_{\ell}} \right\| - \left\| x^{j+1} - x^{j+2} \right\|}{2} + c\left[\sqrt{F(x^{k_{\ell}}) - \zeta} - \sqrt{F(x^{j+2}) - \zeta}\right],\nonumber
	\end{eqnarray} from which we obtain  (\ref{sum-iteration-error-0}) for $j+1$.   The desired induction on $j$ is now complete.  In summary, we have now proved the properties (\ref{neighborhood-0})-(\ref{sum-iteration-error-0}).

	Step 3. We prove that the whole sequence $\{x^k\}$ converges to $\bar x$ and  (\ref{linear-convergence-objective})-(\ref{linear-convergence-sequence}) hold.
	
	By (\ref{sum-iteration-error-0}),
	for all $j\ge k_{\ell}$
	\begin{eqnarray*}
		\sum\limits_{i=k_{\ell}}^j \left\| x^i - x^{i+1} \right\| &\le& \frac{\left\| x^{k_{\ell}-1} - x^{k_{\ell}} \right\| - \left\| x^{j} - x^{j+1} \right\|}{2} + c\left[\sqrt{F(x^{k_{\ell}}) - \zeta} - \sqrt{F(x^{j+1}) - \zeta}\right]\nonumber\\
		&\le& \frac{\left\| x^{k_{\ell}-1} - x^{k_{\ell}} \right\|}{2} + c \sqrt{F(x^{k_{\ell}}) - \zeta} < \infty,
	\end{eqnarray*}which indicates that $\left\{ x^k \right\}$ is a Cauchy sequence. It follows that the whole sequence converges to
	the stationary point
	$\bar{x}$.
	{{Further for all $k \ge k_{\ell}$, we have $x^k\in \mathbb{B}(\bar{x},\epsilon)$}}. As a result, the PG-iteration-based error bound condition (\ref{error-bound-condition-1}) holds on all the iteration points $\left\{ x^k \right\}_{k > k_{\ell}}$. Recall that by (\ref{Sufficient-descent-inequality-0}) and (\ref{cost-to-go-estimate-2}), we have
	\begin{eqnarray}
	F(x^{k+1}) - F(x^k) &\le & -\kappa_1\left\| x^{k+1} - x^k \right\|^2, \nonumber \\
	F(x^{k+1}) - \zeta &\le & \hat{ \kappa}_2 \left( \kappa^2 + 1 \right)\left\| x^{k+1} - x^k \right\|^2 ,\nonumber
	\end{eqnarray}which implies that
	$$
	F(x^k) - F(x^{k+1})\ge \frac{\kappa_1}{\hat{ \kappa}_2\left( \kappa^2 + 1 \right)} \left(F(x^{k+1}) - \zeta\right).
	$$
	We can observe easily that
	$$
	F(x^k) - \zeta + \zeta - F(x^{k+1})\ge \frac{\kappa_1}{\hat{ \kappa}_2\left( \kappa^2 + 1 \right)} \left(F(x^{k+1}) - \zeta\right).
	$$
	Thus we have
	$$
	F(x^{k+1}) - \zeta \le \sigma \left(F(x^{k}) - \zeta\right), \mbox{with } \sigma := \frac{1}{1 + \frac{\kappa_1}{\hat{ \kappa}_2\left( \kappa^2 + 1 \right)}} < 1,
	$$
	which completes the proof of  (\ref{linear-convergence-objective}).
	
	In addition, we have shown that for any $j \ge k_{\ell}$, we have $x^{j}\in \mathbb{B} \left( \bar{x}, \epsilon \right)$, $x^{j+1} \in \mathbb{B} \left( \bar{x}, \epsilon \right)$, $F(\bar{x}^j) = \zeta$, $F(\bar{x}^{j+1}) = \zeta$. And by \eqref{iteration-error-bound},
	\begin{equation*}
	\left\| x^j - x^{j+1} \right\| \le c \left[\sqrt{F(x^{j}) - \zeta} - \sqrt{F(x^{j+1}) - \zeta}\right] + \frac{\left\| x^{j-1} - x^{j} \right\| - \left\| x^{j} - x^{j+1} \right\|}{2}.
	\end{equation*}
	For any $K \ge k \ge k_{\ell}$, by summing the above inequality from $j = k$ to $j = K$, it follows that
	\begin{equation*}
	\sum\limits_{j=k}^K \left\| x^j - x^{j+1} \right\| \le \frac{\left\| x^{k-1} - x^{k} \right\|}{2} + c \sqrt{F(x^{k}) - \zeta}.
	\end{equation*} Denote by $
	\Delta_k := \sum_{j=k}^{\infty} \left\| x^j - x^{j+1} \right\|.
	$ By the above inequality, we have that for any $k \ge k_{\ell}$,
	\begin{eqnarray}
	\Delta_{k} &\le& \frac{\left\| x^{k-1} - x^{k} \right\|}{2} + c \sqrt{F(x^{k}) - \zeta} \nonumber \\
	& \overset{(\ref{new})}{\le}& \frac{\left\| x^{k-1} - x^{k} \right\|}{2} + c \sqrt{\hat{ \kappa}_2 \left( \kappa^2 + 1 \right)}\left\| x^{k-1} - x^{k} \right\| \nonumber \\
	& = & \left(\frac{1}{2} + c \sqrt{\hat{ \kappa}_2 \left( \kappa^2 + 1 \right)} \right)(\Delta_{k-1}-\Delta_{k}).\nonumber
	\end{eqnarray}Therefore we obtain
	\begin{equation*}
	\Delta_k \le \rho \Delta_{k-1},\quad \forall k\ge k_{\ell},\quad \rho: = 1 - \frac{2}{3+2 c \sqrt{\hat{ \kappa}_2 \left( \kappa^2 + 1 \right)}} < 1.
	\end{equation*}
	Since  $x^k \rightarrow \bar{x}$, it follows that  $\left\| x^k - \bar{x} \right\| \le \sum_{i=k}^{\infty} \left\| x^i - x^{i+1} \right\| = \Delta_k$.
	Consequently,
	\begin{equation*}
	\|x^k - \bar{x}\| \le \rho^{k-k_{\ell}}  \Delta_{k_{\ell}} = \rho_0\rho^k,\quad \forall k\ge k_{\ell}, \quad \rho_0 = \frac{\Delta_{k_{\ell}}}{\rho^{k_{\ell}}},
	\end{equation*}
	which proves (\ref{linear-convergence-sequence}).

\end{document}